\documentclass{siamart251216}

\usepackage{amsfonts}
\usepackage[T1]{fontenc}
\usepackage[utf8]{inputenc}
\usepackage{graphicx}
\usepackage{epstopdf}
\usepackage{amssymb}
\usepackage[all]{xy}
\usepackage{hyperref}
\usepackage{enumitem}
\usepackage{mdframed}
\usepackage{esint}
\usepackage{bbm}
\usepackage{comment}
\usepackage{mathdots}
\usepackage[skip=1pt]{caption}
\usepackage{subcaption}
\usepackage{algorithm}
\usepackage{algpseudocode}
\usepackage{svg}
\usepackage{float}
\restylefloat{figure}
\ifpdf
  \DeclareGraphicsExtensions{.eps,.pdf,.png,.jpg}
\else
  \DeclareGraphicsExtensions{.eps}
\fi

\newsiamremark{remark}{Remark}
\newsiamremark{hypothesis}{Hypothesis}
\crefname{hypothesis}{Hypothesis}{Hypotheses}
\newsiamthm{claim}{Claim}
\newsiamremark{fact}{Fact}
\crefname{fact}{Fact}{Facts}

\headers{Linear Recurrent Neural Networks as Time-Delay Embeddings}{F. Ng and J. N. Kutz}

\title{Linear Recurrent Neural Networks as Time-Delay Embeddings\thanks{Posted 26 May 2026.
\funding{\textbf{This work was funded by the Air Force Office of Scientific Research (FA9550-24-1-0141).}}}}

\author{Fisher Ng \thanks{Department of Applied Mathematics, University of Washington, Seattle, WA 
  (\email{fisherng@uw.edu}, \url{website}).}
\and J. Nathan Kutz \thanks{Autodesk Research, London, UK 
  (\email{nathan.kutz@autodesk.com}).}}

\usepackage{amsopn}
\DeclareMathOperator{\diag}{diag}

\newcommand{\R}{\mathbb{R}}
\newcommand{\C}{\mathbb{C}}

\newcommand{\bb}{\mathbf{b}}

\newcommand{\bw}{\mathbf{w}}

\newcommand{\bx}{\mathbf{x}}

\newcommand{\by}{\mathbf{y}}
\newcommand{\bh}{\mathbf{h}}

\newcommand{\bA}{\mathbf{A}}
\newcommand{\bB}{\mathbf{B}}
\newcommand{\bC}{\mathbf{C}}

\newcommand{\bH}{\mathbf{H}}
\newcommand{\bI}{\mathbf{I}}

\newcommand{\bM}{\mathbf{M}}

\newcommand{\bQ}{\mathbf{Q}}

\newcommand{\bP}{\mathbf{P}}
\newcommand{\bT}{\mathbf{T}}
\newcommand{\bU}{\mathbf{U}}
\newcommand{\bV}{\mathbf{V}}
\newcommand{\bW}{\mathbf{W}}

\newcommand{\bSigma}{\boldsymbol{\Sigma}}

\newcommand{\bLambda}{\boldsymbol{\Lambda}}

\newcommand{\op}{\mathrm{op}}
\newcommand{\bzero}{\mathbf{0}}

\newcommand{\cA}{\mathcal{A}}

\newcommand{\cO}{\mathcal{O}}

\newcommand{\A}{\mathbb{A}}
\newcommand{\D}{\mathbb{D}}
\newcommand{\M}{\mathbb{M}}
\newcommand{\T}{\mathbb{T}}
\newcommand{\U}{\mathbb{U}}
\newcommand{\V}{\mathbb{V}}

\begin{document}

\maketitle

\begin{abstract}
    Sequence models, and particularly Linear Recurrent Neural Networks (LRNNs) of the form $\bh_{k+1} = \bW \bh_{k} + \by_k + \bb$, are widely applicable in time-series analysis for dynamical systems, yet, as black-box algorithms, much is unknown about why they perform well. 
    In this work, we leverage Takens' embedding theorem, which provides conditions under which partially observed time series organized into delay-coordinate vectors can faithfully represent the original system's dynamics, as a theoretical framework for explaining how and why sequence models preserve and reconstruct dynamical systems.
    For LRNNs, concatenating output states into delay-coordinate vectors gives rise to a ``delay" matrix $\M_{n,m}\in \C^{(nm) \times (n+1)m}$: a block matrix consisting of identity matrices $\bI \in \R^{m \times m}$ repeated $n$ times along the main diagonal and weight matrices $\bW \in \C^{m \times m}$ featured $n$ times along the super-diagonal. 
    $\M_{n,m}$ relates the delay-coordinates of the input time series to those of the LRNN output states, and, for $\M_{n,m}$ to be an embedding, it must be full row-rank.
    We provide explicit conditions for $\M_{n,m}$ to be full row-rank and prove the condition number of $\M_{n,m}$ and determinant of $\M_{n,m} \M_{n,m}^*$--measures of embedding stability--are bounded independent of $n$, at least for certain ranges of $\bW$'s singular values: namely, when $\sigma_{\max}(\bW) \le 1$. 
    This result explains why the spectrum of $\bW$ for trained LRNNs tends to converge to within the unit circle.
\end{abstract}

\begin{keywords}
Takens' Embedding Theorem, Time-Delay Embedding, Block tri-diagonal matrices, Linear Recurrent Neural Networks, Linear Recurrence Relations, First-Order Difference Equations, Block Gerschgorin's Theorem
\end{keywords}

\begin{MSCcodes}
15A12, 15A18, 37C05, 39A05
\end{MSCcodes}

\section{Introduction}
In many data-driven applications, the equations that govern the dynamics of the system under study are unknown, and the goal is to infer the unobserved states or to discover a model for the dynamics of the system based on a set of partial observations in the form of time series data ~\cite{Crutchfield_1987, Broomhead_1989, Tong_1990, Bakarji_2022}.
More precisely, we consider a dynamical system
\begin{align}\label{eq:Dynamical_System}
    \dot{\bx}(t) = \mathbf{f}(\bx(t)) \quad \mbox{where} \quad \bx(t) \in \R^p
\end{align}
where $\mathbf{f}$ is the vector field and $\bx(t)$ is the state of the system.
When the system is dissipative, it loses energy over time, and, in the long-term, its solution can settle to and evolve on a compact, low-dimensional invariant set $\cA \subseteq \R^p$ called an \textit{attractor}, of fractal dimension $d$.
The collected time series $\{\by_k\}_{k = 0}^T \in \R^m$ for some $T \in \mathbb{N}$ is modeled by a smooth, multivariate function $\mu: \R^{p} \to \R^m$, $m \le p$, that takes measurements:
\begin{align}\label{eq:System_Observations}
    \by_{k} := \by(t - k \tau) = \mu(\bx(t - k \tau)) \in \R^m
\end{align}
where $\tau > 0$ is a chosen time delay.

Takens' embedding\footnote{In differential topology, an embedding is a one-to-one map the derivative of which is also one-to-one, which may be slightly different than the notion of an embedding in machine learning literature.} theorem~\cite{Takens_1981} is a foundational result in dynamical systems theory that provides a rigorous method for characterizing the attractor of a dynamical system from a single observed time series. 
The theorem states that the observation of a scalar measurement (i.e. $\mu: \R^p \to \R$) of a dynamical system over time can reconstruct a topologically equivalent representation of the original system's attractor by constructing delay-coordinate vectors — that is, by forming vectors of the form 
\begin{align}\label{eq:Delay_Coordinate_Map}
    \Phi(\bx(t)) = \begin{bmatrix}\by(t) & \by(t-\tau) & \by(t-2\tau) & \cdots & \by(t-(n-1)\tau)\end{bmatrix}^\top \in \R^{n}
\end{align}
where $n$ is the embedding dimension. 
Provided $n$ is large enough--specifically, $n \geq 2d+1$, where $d$ is the dimension of the attractor, along with a few other mild assumptions on $\tau$ in relation to the periodic orbits in the attractor, then with probability one the reconstructed attractor is diffeomorphically equivalent to the original--meaning it preserves the geometric and topological properties of the true attractor.
The theorem relies on the fact that, when the dynamics of the state variables of a system are coupled, information about unobserved states is implicitly-available in the explicitly-observed time series data, and with enough measurements over time, it becomes possible to infer the evolution of the full state.
This theorem is remarkably powerful since it implies that a hidden, high-dimensional system can be faithfully characterized from a single observable signal, making Takens' theorem a theoretical underpinning of time-series analysis in nonlinear and chaotic dynamics ~\cite{Crutchfield_1987, Broomhead_1989, Tong_1990, Bakarji_2022}.

While for many simple systems, collecting time series observations of a single state variable is sufficient (i.e. $\mu: \R^p \to \R$), for more complex systems, drawing on the time-lagged observations of multiple state variables (i.e. $\mu: \R^p \to \R^m$), organized into multivariate time series, can enable more robust reconstructions ~\cite{Cao_1998} and be useful for multi-scale modeling ~\cite{Judd_1998, Hirata_2006, Gao_2021, Tan_2023}. 
Takens' Embedding theorem has been extended to multivariate delay-coordinate maps to prove that, under the appropriate conditions, such maps can also embed the dynamics of a system on its attractor ~\cite{SYC_1991, Deyle_2011}.

Although Takens' embedding theorem provides a means for forming an image of a system's attractor, the image is only diffeomorphic to the original attractor, making reconstruction impractical. 
Sequence models, such as recurrent neural networks (RNNs) ~\cite{Elman_1990}, have proven useful for time-series analysis by finding better representations of the underlying time series data ~\cite{Bakker_2000, Uribarri_2022, Bhat_2022_RNN} and for reconstructing the original dynamics when using training data of the original attractor as a reference ~\cite{Ebers_SHRED_2023, Williams_2024_SHRED, Faraji_SHRED_2025, Tomasetto_SHRED_2025, Gao_SHRED_2025}. 
Because of the black-box nature of nonlinear sequence models, understanding the conditions under which they work is unknown aside from empirical methods such as training losses, cross-validation, and performance metrics.
Thus, linear RNNs (LRNNs) have received more attention due to their interpretability and how they lend themselves well to analysis  ~\cite{Stolzenburg_2025, Jarne_2023, Jarne_2024}.
Specifically, efforts have contributed toward this goal by analyzing the stability of RNNs ~\cite{Zucchet_2024} and particularly their weight matrices, using random matrix methods, in order to suggest stability when eigenvalues are contained in the unit circle ~\cite{Glorot_2010, Orvieto_2023, Gupta_2022, Bar_2025, Rajan_2006}.

From an analysis point of view, the simplest tractable sequence model to consider is a first-order recurrence relationship of the form:
\begin{align}\label{eq:LRNN}
    \bh_{k+1} = \bW \bh_k + \by_k + \bb
\end{align}
where $\bW \in \R^{m \times m}$ is a ``recurrence" or ``weight" matrix; $\bh_k \in \R^m$ is a system state--initialized with some initial condition $\bh_0 \in \R^m$; $\by_k \in \R^m$ is an external input or driving term; and $\bb \in \R^m$ is a constant or ``bias" term.
Such difference equations also arise naturally in diverse applications, such as the age distribution of segments of a growing population over time ~\cite{Leslie_1945, Lewis_1942}, economic growth models ~\cite{Leontief_1951, Bellman_1952, Merton_1973}, and the page rank algorithm used for Internet searches using Markov chain models ~\cite{Brin_1998}.
Regarding the asymptotic stability of \eqref{eq:LRNN}, $\bh_k$ converges asymptotically to a fixed state $\bh_k^*$ provided the absolute maximal eigenvalue $\max_{k \in \{1,...,m\}}|\lambda_k(\bW)| < 1$ ~\cite{Elaydi_2005} when there is no external time series forcing term $\by_k$.

More broadly, the nature of temporal sequential data, organized into a matrix structure based on delay-coordinates--as shown in the next subsection, provides a mathematical framework for relating Takens' embedding theorem to time-series analysis and recurrent neural networks.  
Of particular interest are the conditions under which the output of the LRNN processed based on the input time series preserves information and allows recovery of the measured system.
Thus, understanding the properties of the delay matrix (see \eqref{eq:LRNN_Delay_Coordinates}) is critical to understanding when the act of processing input time series using a LRNN outputs a representation that preserves the image of the original dynamical system's attractor.
The significant difference in the stability analysis of the linear recurrence relation \eqref{eq:LRNN} undertaken in this work is that it works with collections of delay-coordinates in the context of Takens' embedding theorem, rather than a typical iterate-to-iterate analysis.
Additionally, the results in this paper offer explicit, deterministic conditions for LRNN output states to form embeddings, this as a complement to the random matrix methods or empirical approaches often in machine learning literature.

\subsection{Building a Theoretical Framework}

Suppose that time series observations $\{\by_{k}\}_{k = 0}^T$ of the dynamical system \eqref{eq:Dynamical_System} are organized into a multivariate delay-coordinate map of length $n < T$, and further that the observations are input into the first-order difference equation \eqref{eq:LRNN}.
The result will be $n + 1$ simultaneous equations of the LRNN states $\{\bh_{k-\ell}\}_{\ell = 0}^{n + 1}$. 
Concatenating the states into a delay-coordinate representation, and re-arranging gives rise to the system of equations:
\begin{align} \label{eq:LRNN_Delay_Coordinates}
    \begin{bmatrix}
        \bI & - \bW & \bzero & \cdots & \bzero \\
        \bzero & \bI & - \bW & \ddots & \bzero \\
        \vdots & \ddots & \ddots & \ddots & \vdots \\
        \bzero & \bzero & \cdots & \bI & -\bW
    \end{bmatrix} 
    \begin{bmatrix}
        \bh_{k} \\
        \bh_{k - 1} \\
        \vdots \\
        \bh_{k - n} \\
        \bh_{k - (n + 1)}
    \end{bmatrix}  
    = \begin{bmatrix}
        \by_{k-1} \\ \by_{k-2} \\ \vdots \\ \by_{k - (n+1)} 
    \end{bmatrix} + \begin{bmatrix}
        \bb \\ \bb \\ \vdots \\ \bb 
    \end{bmatrix}.
\end{align}
Note that the vector of observations
\begin{align}
    \Phi(\bx_{k}) = \begin{bmatrix}
        \by_k & \by_{k-1} & \cdots & \by_{k - n}
    \end{bmatrix}^\top \in \R^{mn}
\end{align}
is exactly a multivariate delay-coordinate map representing $\bx_{k}$.
By shifting the iterate to $k-1$, the right-hand-side vector of observations is in fact $\Phi(\bx_{k-1})$.
Because an embedding remains an embedding when shifted in the reconstruction space by a constant, time-independent vector, $\Phi(\bx_{k-1})$ is still an embedding when perturbed by the block vector $\boldsymbol{1} \otimes \bb$, where $\otimes$ denotes the Kronecker product and $\boldsymbol{1} \in \R^n$.
Likewise, the vector of concatenated recurrence states, denoted by:
\begin{align}\label{eq:h_Delay_Coordinate_Map}
    \Psi(\bh_{k}) = \begin{bmatrix}
        \bh_k & \bh_{k-1} & \cdots & \bh_{k - (n+1)}
    \end{bmatrix}^\top \in \R^{m(n+1)}
\end{align}
is a sort of delay-coordinate map as well, albeit slightly different than that of Takens'.
Finally, by defining the bi-diagonal block matrix on the left-hand-side as $\M_{n,m}$, we can solve for the delay-coordinate representation of the latent states $\Psi(\bh_k)$ in terms of the time-delay coordinates of the system by applying the pseudo-inverse of $\M_{n,m}$:
\begin{align} \label{eq:LRNN_Delay_Coordinates_Sol}
    \Psi(\bh_{k}) = \M_{n,m}^\dagger (\Phi(\bx_{k-1}) + \boldsymbol{1} \otimes \bb)
\end{align}
To ensure that such a solution is well-behaved, in the sense that the mapping preserves information in the input time series and ensures the original dynamics are recoverable, we would like to know the properties of $\M_{n,m}$, mainly the distribution of its singular values so that we can determine: (i) its rank, to ensure that the mapping preserves information; (ii) its condition number, to quantify the stability of the embedding map under the recurrence relation; and (iii) the general structure of its pseudo-inverse $\M_{n,m}^\dagger$, to see how the solutions $\Psi(\bh_k)$ draw on and combine the relevant input data $\{\by_{k-\ell}\}_{\ell = 1}^{n+1}$. 
Knowing these properties of the singular values of $\M_{n,m}$ would allow the identification of appropriate conditions to impose on the weights $\bW$ for LRNNs when applying them to time series data.
It is therefore the goal of this paper to explicitly determine the properties of $\M_{n,m}$ to provide a basis for analyzing LRNNs as mappings that are a composition of time-delay embeddings.

\subsection{Main Contributions}

Our aim in this work is to establish Takens' time-delay embedding theory as a theoretical framework for explaining why LRNNs as sequence models are well-suited to time-series analysis and state space reconstruction, and in particular how they can preserve the information about states and their evolution over time.
Not only so, this work arrives at explicit, deterministic bounds under which input time series data processed using LRNNs are guaranteed to be diffeomorphically-equivalent to the attractor of the original system \eqref{eq:Dynamical_System}.
Specifically, our analysis shows the following:\\

\begin{enumerate}
    \item The outputs of sequence models are in fact composite embeddings: the sequence model must be an embedding of an already-valid time-delay embedding.
    This has fundamental consequences for the input data, which must form an embedding: the number of states observed and the observation function $\mu$ used, the number of time-lagged observations available and the choice of sampling frequency $\tau$, and the dimension of the attractor and the nature of its periodic orbits all become essential to know for effective time series modeling.
    For the LRNN to be an embedding, the nature of the network architecture: the size of the hidden state, the behavior of the weights, and the choice of activation function--becomes equally essential for time-delay embedding.
    \item For the case of scalar LRNNs $h_{k+1} = \omega h_k + x_k + b$, we show that for any $\omega \in \C$, $\mathrm{rank}(\bM_n) = n$, guaranteeing that any choice of weight parameter $\omega$ will mean $\Psi(h_k)$ \eqref{eq:h_Delay_Coordinate_Map} is a time-delay embedding of the state $\bx(t)$, provided $\Phi(\bx(t))$ itself is an embedding. 
    Although all $\omega \in \C$ are technically admissible choices, weight parameters within the unit circle (i.e. $\omega \in \{\omega \in \C : |\omega| < 1\})$ ensure that $\bM_n$ has a bounded condition number and a small generalized determinant (see \eqref{eq:Generalized_Determinant}).
    This agrees with the observation that the eigenvalue distributions of RNN weights tend to settle within the unit circle. 
    Importantly, the bound for the condition number and determinant scale independently of the number of lags $n$ used in the delay-coordinate map, and, although increasing $n$ does increase the ill-conditioning of $\bM_n$, it only does so up to a finite limit, potentially allowing for ``infinite" look back.
    \item For the case of LRNNs as in \eqref{eq:LRNN} equipped with Hermitian weight matrices $\bW$, we show that for any $\bW \in \C^{m \times m}$, $\mathrm{rank}(\M_{n,m}) = mn$, and thus the LRNN is always an embedding from $\Phi(\bx(t))$ to $\Psi(\bh_k)$. 
    Again, although $\M_{n,m}$ is generally an embedding of $\cA$, $\M_{n,m}$ has a low condition number and small generalized determinant when $\sigma_{\max}(\bW) < 1$, and thus is a more stable embedding when all eigenvalues of $\bW$ lie within the unit circle.
    The bounds on the condition number and determinant are again independent of both $m$ and $n$, implying that choosing high-dimensional latent spaces and long delay-coordinate maps will not radically destabilize the embedding.
    \item For the case of LRNNs \eqref{eq:LRNN} equipped with arbitrary $\bW$, we prove that a sufficient condition\footnote{See Theorem \ref{thm:Block_Mat_Gen_Diag_Dom} for a better but more complicated condition.} for $\M_{n,m}$ to be rank $nm$, and thus preserve all information in $\Phi(\bx(t))$ when mapping to coordinates $\Psi(\bh_k)$, is that $\sigma_{\max}(\bW) < \frac{1}{2}(1 + \sigma_{\min}(\bW)^2)$. 
    This condition is best satisfied when the spectrum of $\bW$ is contained well-within the unit circle. 
    Notably, we also prove that LRNNs with unitary weight matrices $\bW$ are also embeddings.
    \item The above results are \textit{deterministic} bounds and apply to \textit{arbitrary} weight matrices. 
    They are consistent with empirical approaches to training RNNs and observations about the conditions that promote their stability. 
    Thus, the above insights are of both pedagogical value in that they provide theoretical insights to complement existing empirical observations, and of practical value in their ability to identify conditions to impose on weight matrices $\bW$ and on the input data to ensure topological and differential information of the time series is preserved when processed using RNNs.
    It also sets the stage for our future work, in which we will prove the conditions under which nonlinear RNNs are time-delay embeddings of time series data.
\end{enumerate}

\section{Preliminaries}

In this section, we make explicit the conventions and notation, as well as definitions and theorems, used throughout the paper.
Other results, used in specific sections, are introduced as needed.

While often in applications the LRNNs of interest are real-valued, we consider the generalized setting in which they are complex for completeness. 
We reserve bold symbols (ex. $\bI, \bzero, \bW, \bU, \bSigma, \bV$, etc.) for matrices and blackboard bold symbols (ex. $\M_{n,m}, \T$, etc., excluding $\R$ and $\C$) for block matrices. 
For a complex number $\omega \in \C$, its complex conjugate is $\bar{\omega}$, and its modulus is $|\omega| = \sqrt{\omega \bar{\omega}}$. 
The complex conjugate of a vector $\bw \in \C^{m}$, denoted $\bar{\bw}$, is formed by element-wise conjugation. 
We will let $*$ be the Hermitian transpose.

A helpful matrix decomposition is the singular value decomposition (SVD), which decomposes any matrix $\bM \in \C^{m \times n}$ into factors $\bM = \bU \bSigma \bV^*$, where $\bU \in \C^{m \times m}$ and $\bV \in \C^{n \times n}$ are unitary. 
A matrix $\bQ \in \C^{m \times m}$ is unitary if $\bQ^* \bQ = \bQ \bQ^* = \bI$. 
The diagonal matrix $\bSigma \in \R^{m \times n}$ contains real, non-negative singular values $\sigma_1(\bM) \ge \sigma_2(\bM) \ge \cdots \ge \sigma_r(\bM) \ge 0$, where $r = \mbox{rank}(\bM) \le \min(m,n)$. 
The SVD can define a pseudo-inverse for rectangular matrices. 
In particular, a short-fat matrix $\bM \in \C^{m \times n}$ with $ m \le n$ has a right-inverse $\bM^\dagger = \bM^*(\bM \bM^*)^{-1}$, which is: $\bM^\dagger = \bV \bSigma^\dagger \bU^*$.

A special class of matrices are Hermitian matrices, matrices $\bA \in \C^{m \times m}$ such that $\bA = \bA^*$. 
The spectral theorem applies, and $\bA = \bP \bLambda \bP^{-1}$, where $\bLambda$ is a real-valued diagonal matrix composed of the eigenvalues of $\bA$, and where $\bP^{-1} = \bP^*$ is the unitary eigenvector matrix that diagonalizes $\bA$. 
An immediate consequence of any matrix $\bM$ having an SVD is that its singular values relate to the eigenvalues of $\bA = \bM \bM^*$. 
That is, $\bA = \bM \bM^* = \bU \bSigma^2 \bU^*$, meaning $\sigma_j(\bM) = \sqrt{\lambda_j(\bA)}$ for $j \in \{1,\cdots,m\}$.

To establish deterministic bounds on the matrix mappings of interest, a helpful metric is the spectral radius, which is equivalent to the operator norm and the largest singular value. 
That is, $\|\bM\|_{\op} = \sigma_{\max}(\bM)$. 
A notable consequence of this is that $\|\bM^\dagger\|_{\op} = \sigma_{\min}(\bM)$. 
With these values, we define the condition number, which measures the sensitivity of a matrix, as $\kappa(\bM) = \|\bM\|_{\op} \cdot \|\bM^\dagger\|_{\op} = \frac{\sigma_{\max}(\bM)}{\sigma_{\min}(\bM)}$.

Another metric for matrix sensitivity is the determinant. 
It can quantify the degree to which a square matrix, as a mapping, distorts the volume of the image of a unit ball. 
The determinant of a square matrix $\bA \in \C^{m \times m}$ with eigenvalues $\{\lambda_j\}_{j = 1}^m$ is the product of its $m$ eigenvalues; that is, $\det(\bA) = \prod_{j = 1}^m \lambda_j(\bA)$. 
While rectangular matrices have no formal determinant, since every matrix $\bM \in \C^{m \times n}$ has singular values $\{\sigma_j\}_{j = 1}^r$, which relate to the eigenvalues of $\bA =\bM \bM^*$ in that $\sigma_j(\bM) = \sqrt{\lambda_j(\bA)}$, we can define a generalized determinant for rectangular matrices: 
\begin{align}\label{eq:Generalized_Determinant}
    S(\bM) := \sqrt{|\det(\bA)|} = \prod_{j = 1}^m \sigma_j(\bM).
\end{align}
Naturally, just as $\det(\bA^{-1}) = \det(\bA)^{-1}$, so too do we have $S(\bM^\dagger) = S(\bM)^{-1}$. 
Thus, the generalized determinant can offer another measure of the sensitivity of a matrix.

For the condition number to be finite and the determinant to be well-defined, we need to know when $\sigma_{\min}(\bM) > 0$ and  $\sigma_{\max}(\bM) < + \infty$. 
Having such conditions would allow us to determine how stable a mapping is.
In many cases, explicitly computing singular values, especially for general classes of matrices, is impossible, so we resort to looser estimates in the form of bounds on the singular values. 
One way to bound the spectrum of a matrix is using Gerschgorin's circle theorem. 
We will use $j$ for rows and $k$ for columns to avoid using $i$ as an index and creating ambiguity when working with complex numbers.

\begin{theorem}\label{thm:Gerschgorin}
    (Gerschgorin ~\cite{Golub_1996}) Let $\bA \in \C^{m \times m}$ have entries $a_{jk}$ for $j, k \in \{1,...,m\}$. For each $j \in \{1, ..., m\}$, let $R_j$ be the sum of the absolute values of the non-diagonal entries in the $j$th row of $\bA$:
    \begin{align}
        R_j = \sum_{k = 1, k \neq j}^m |a_{jk}|
    \end{align}
    Let $D(a_{jj}, R_j) \subseteq \C$ be a closed disc of radius $R_j$ centered at $a_{jj}$. 
    Then, every eigenvalue of $\bA$ lies within at least one of the discs $D(a_{jj},R_j)$.
\end{theorem}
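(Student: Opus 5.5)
The statement to prove is Gerschgorin's circle theorem. I would argue directly from an eigenpair by examining the row in which the eigenvector has its largest entry.

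Let $\lambda$ be an eigenvalue of $\bA$ with eigenvector $\bx \neq \bzero$, so $\bA\bx = \lambda \bx$. Choose an index $j$ with $|x_j| = \max_{k} |x_k|$. Since $\bx \neq \bzero$, we have $|x_j| > 0$.

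The $j$th row of the eigenvalue equation reads $\sum_{k} a_{jk} x_k = \lambda x_j$. Moving the diagonal term to the other side gives
\begin{align*}
(\lambda - a_{jj})\, x_j = \sum_{k \neq j} a_{jk} x_k .
\end{align*}
Take moduli and apply the triangle inequality. Then use $|x_k| \le |x_j|$ for every $k$, which yields
\begin{align*}
|\lambda - a_{jj}|\,|x_j| \le \sum_{k\neq j} |a_{jk}|\,|x_k| \le R_j\, |x_j| .
\end{align*}
Dividing by $|x_j| > 0$ gives $|\lambda - a_{jj}| \le R_j$. Hence $\lambda \in D(a_{jj}, R_j)$, which is one of the closed discs, as claimed.

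There is no real obstacle in this argument. The only points needing care are choosing the maximal-modulus index, which is what makes the division legitimate, and noting that the disc is closed, so the non-strict inequality suffices. No results from earlier in the paper are needed.
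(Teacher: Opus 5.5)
Your proof is correct and complete. Choosing the index of maximal modulus in the eigenvector is what justifies dividing by $|x_j|$. The degenerate case $m=1$, where $R_1=0$ and the disc is the single point $a_{11}$, needs no separate treatment.

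The paper does not prove this statement; it quotes it from Golub--Van Loan. There the argument goes through singularity rather than an eigenvector. Write $\bA = \mathbf{D} + \mathbf{F}$ with $\mathbf{D}$ diagonal and $\mathbf{F}$ the off-diagonal part. If $\lambda \neq a_{jj}$ for all $j$, then $\bA - \lambda\bI = (\mathbf{D}-\lambda\bI)\bigl(\bI + (\mathbf{D}-\lambda\bI)^{-1}\mathbf{F}\bigr)$ can be singular only if $\|(\mathbf{D}-\lambda\bI)^{-1}\mathbf{F}\|_\infty = \max_j R_j/|a_{jj}-\lambda| \ge 1$.

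That route is exactly ``strict diagonal dominance of $\bA-\lambda\bI$ implies invertibility.'' So Corollary \ref{cor:Scalar_Diag_Dominance} and the block results used later (Lemma \ref{lemma:Block_Diag_Dominance_Invertible}, Theorem \ref{thm:Block_Gerschgorin}) become visibly direct generalizations.

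Your eigenvector route is more elementary and extends to blocks just as easily:
\begin{enumerate}
\item take the block $\bx_j$ of largest norm;
\item solve the $j$th block row as $\bx_j = -\sum_{k\neq j}(\bA_{jj}-\lambda\bI)^{-1}\bA_{jk}\bx_k$ when $\lambda\notin\sigma(\bA_{jj})$;
\item divide by $\|\bx_j\|$ to obtain $\sum_{k\neq j}\|(\bA_{jj}-\lambda\bI)^{-1}\bA_{jk}\|\ge 1$.
\end{enumerate}
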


Gerschgorin's theorem defines sets guaranteed to contain the eigenvalues of $\bA$ using only information about the entries of $\bA$. 
For this result to indicate whether $\bA$ is non-singular, we need to show that zero is not in the spectrum of $\bA$, which a corollary of Theorem \ref{thm:Gerschgorin} can ensure:

\begin{corollary}\label{cor:Scalar_Diag_Dominance}
    A matrix $\bA \in \mathbb{C}^{m \times m}$ with entries $a_{jk}$ is \textit{strictly diagonally dominant} if 
    \begin{align}
        \sum_{k = 1, k \neq j}^m |a_{jk}| < |a_{jj}| \quad \mbox{for} \quad j \in \{1,..., m\}
    \end{align} 
    If $\bA$ is strictly diagonally dominant, then $0$ is not contained in the Gerschgorin discs, in which is contained the spectrum of $\bA$, and thus $\bA$ is non-singular.
\end{corollary}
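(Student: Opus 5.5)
The plan is to argue by contradiction, assuming that $\bA$ is strictly diagonally dominant but singular. Singularity means $0$ is an eigenvalue of $\bA$. Theorem \ref{thm:Gerschgorin} then places $0$ in at least one disc $D(a_{jj},R_j)$, so for that index $j$ we have $|0 - a_{jj}| \le R_j$, that is, $|a_{jj}| \le \sum_{k\neq j}|a_{jk}|$. This directly contradicts the strict inequality $\sum_{k \neq j}|a_{jk}| < |a_{jj}|$ assumed for every row. Hence $0$ lies in none of the discs, $0$ is not an eigenvalue, and $\det(\bA) = \prod_j \lambda_j(\bA) \neq 0$, so $\bA$ is non-singular.

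For completeness, and since Theorem \ref{thm:Gerschgorin} is quoted rather than proved, I would also give a self-contained version of the key step. Suppose $\bA\bx = \bzero$ with $\bx \neq \bzero$, and choose $j$ with $|x_j| = \max_k |x_k| > 0$. Row $j$ of $\bA\bx = \bzero$ gives $a_{jj}x_j = -\sum_{k\neq j} a_{jk}x_k$. Taking moduli and dividing by $|x_j|$ yields
\begin{align}
|a_{jj}| \le \sum_{k \neq j}|a_{jk}|\frac{|x_k|}{|x_j|} \le \sum_{k\neq j}|a_{jk}|,
\end{align}
which again contradicts strict diagonal dominance.

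There is no real obstacle here. The only point needing care is the equivalence ``$0 \notin \sigma(\bA)$ $\iff$ $\bA$ invertible,'' which follows from the product formula for the determinant recalled in the preliminaries.
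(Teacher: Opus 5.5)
Your proposal is correct and follows the same route as the paper, which justifies the corollary in one line: strict dominance gives $|0 - a_{jj}| = |a_{jj}| > R_j$ for every $j$, so $0$ lies outside every Gerschgorin disc and therefore, by Theorem~\ref{thm:Gerschgorin}, is not an eigenvalue. Your supplementary null-vector argument (the L\'evy--Desplanques argument, i.e.\ the proof of Gerschgorin's theorem specialized to $\lambda = 0$) is also correct; it makes the step self-contained without changing the approach.
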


Since we will also consider block matrices, having a sufficient condition of strict diagonal dominance for block matrices would be useful for determining when they are invertible. 
Theorem 3.1 of ~\cite{Echeverria_2018}, which generalizes the result of ~\cite{Feingold_1962} and ~\cite{Benzi_2017}, provides such a definition and a theorem about matrix non-singularity.

\begin{definition}\label{def:Block_Diag_Dominance}
    (Definition 2.1 ~\cite{Echeverria_2018}) We say a block matrix $\A = [\bA_{jk}]$ with blocks $\bA_{jk} \in \C^{m \times m}$ for $j,k \in \{1,...,n\}$ is called \textit{row block diagonally dominant} with respect to the chosen matrix norm $\|\cdot\|$ when the blocks $\bA_{jj}$ are non-singular, and
    \begin{align}\label{eq:Block_Diag_Dominance}
        \sum_{k = 1, k \neq j}^n \|\bA_{jj}^{-1} \bA_{jk}\| \le 1 \quad \mbox{for} \quad j \in \{1,..., n\}
    \end{align}
\end{definition}

\begin{lemma}\label{lemma:Block_Diag_Dominance_Invertible}
    (Lemma 3.1 ~\cite{Echeverria_2018}) If a matrix $\A$ is row block strictly diagonally dominant as in Definition \ref{def:Block_Diag_Dominance}, then $\A$ is non-singular.
\end{lemma}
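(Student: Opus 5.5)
The plan is to argue by contradiction: assume $\A\bx = \bzero$ for some nonzero block vector $\bx = [\bx_1; \dots; \bx_n]$ with $\bx_k \in \C^m$, and show that the strict dominance inequality fails at one particular block row. Here "strict" means the inequality in \eqref{eq:Block_Diag_Dominance} holds with $<$ in place of $\le$, and $\|\cdot\|$ is an operator (induced) norm compatible with a vector norm on $\C^m$, also written $\|\cdot\|$. This mirrors the classical proof of Corollary \ref{cor:Scalar_Diag_Dominance}, with blocks in place of scalar entries and norms in place of moduli.

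First, since each diagonal block $\bA_{jj}$ is non-singular by Definition \ref{def:Block_Diag_Dominance}, let $\D = \diag(\bA_{11}, \dots, \bA_{nn})$ and pass to $\D^{-1}\A$. Its diagonal blocks are $\bI$ and its off-diagonal blocks are $\bA_{jj}^{-1}\bA_{jk}$. Since $\D$ is invertible, $\A$ is singular exactly when $\D^{-1}\A$ is. So it suffices to treat $\D^{-1}\A$, and the dominance hypothesis becomes $\sum_{k\neq j}\|\bA_{jj}^{-1}\bA_{jk}\| < 1$ for every $j$.

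Next, pick the index $j$ maximizing $\|\bx_k\|$ over $k$. Then $\|\bx_j\| > 0$ because $\bx \neq \bzero$. The $j$-th block row of $\D^{-1}\A\bx = \bzero$ reads
\begin{align}
    \bx_j = -\sum_{k\neq j} \bA_{jj}^{-1}\bA_{jk}\bx_k .
\end{align}
Take norms, apply the triangle inequality and submultiplicativity of the induced norm, and bound each $\|\bx_k\|$ by $\|\bx_j\|$. This gives
\begin{align}
    \|\bx_j\| \le \Big(\sum_{k\neq j}\|\bA_{jj}^{-1}\bA_{jk}\|\Big)\|\bx_j\| < \|\bx_j\|,
\end{align}
which is a contradiction. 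Hence $\A$ has trivial kernel and is non-singular.

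The main obstacle is interpretive rather than technical. Definition \ref{def:Block_Diag_Dominance} is stated with a non-strict $\le$, and with equality the claim is false in general: for example, the $2\times 2$ scalar matrix with every entry equal to $1$ satisfies the non-strict condition but is singular. The proof must therefore use the strict version, as the word "strictly" in the lemma indicates. It also needs compatibility of the matrix norm with a vector norm, which is automatic for induced norms such as $\|\cdot\|_{\op}$, the norm used later in the paper.
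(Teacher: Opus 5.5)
Your proof is correct. The paper gives no argument of its own and simply imports the lemma from Echeverr\'ia et al.; your argument (normalize by $\D^{-1}$, take a nonzero kernel vector, select the block of largest norm, and contradict strict dominance in that block row) is the standard Feingold--Varga argument behind that citation, and your caveats are apt: the paper's Definition states only $\le 1$, which alone does not force non-singularity (your all-ones example), and the norm must be consistent with a vector norm, which in fact holds for every submultiplicative matrix norm, not only induced ones.
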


\begin{theorem}\label{thm:Block_Gerschgorin}
    (Block Gerschgorin; Corollary 3.2 ~\cite{Echeverria_2018}) If $\A$ is as in Definition \ref{def:Block_Diag_Dominance}, and $\lambda$ is an eigenvalue of $\A$, then there exists at least one $j \in \{1,..., n\}$ with
    \begin{align}
        G_j = \{z \in \C: \sum_{k = 1, k \neq j}^n \|(\bA_{jj} - z I)^{-1} \bA_{jk}\| \ge 1\}
    \end{align}
    That is, the spectrum of $\A$ lies in the union of discs: $\sigma(\A) := \{\lambda_j(\A)\}_{j = 1}^{mn} \in \cup_{j = 1}^n G_j$.
\end{theorem}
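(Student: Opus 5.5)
The plan is to argue by contraposition, reducing the statement to the non-singularity result of Lemma~\ref{lemma:Block_Diag_Dominance_Invertible}. To keep it self-contained, I will also run the underlying maximal-block argument directly. Throughout, $\|\cdot\|$ is taken to be an induced (operator) norm on $\C^{m \times m}$, compatible with a vector norm on $\C^m$ also written $\|\cdot\|$, so that $\|\bA \bx\| \le \|\bA\|\,\|\bx\|$.

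First I dispose of the degenerate case. If $\bA_{jj} - \lambda \bI$ is singular for some $j$, then the quantity $\|(\bA_{jj} - \lambda \bI)^{-1}\bA_{jk}\|$ is undefined. Under the natural convention that it equals $+\infty$, the inequality defining $G_j$ holds, and $\lambda \in G_j$. I would state this convention explicitly, since it is implicit in the definition of the sets $G_j$. Hence I may assume every diagonal block $\bA_{jj} - \lambda\bI$ is invertible.

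Next, let $\lambda$ be an eigenvalue of $\A$ with eigenvector $\bx \neq \bzero$, partitioned into blocks $\bx = (\bx_1, \dots, \bx_n)$ with $\bx_k \in \C^m$. The $j$th block row of $(\A - \lambda \bI)\bx = \bzero$ reads
\begin{align}
    (\bA_{jj} - \lambda \bI)\bx_j = -\sum_{k \neq j} \bA_{jk} \bx_k,
\end{align}
so that $\bx_j = -\sum_{k \neq j} (\bA_{jj} - \lambda\bI)^{-1}\bA_{jk}\bx_k$. I choose $j$ to be an index maximizing $\|\bx_j\|$; this gives $\|\bx_j\| > 0$ because $\bx \neq \bzero$. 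Taking norms and using $\|\bx_k\| \le \|\bx_j\|$ for every $k$, I get
\begin{align}
    \|\bx_j\| \le \Big(\sum_{k\neq j} \|(\bA_{jj}-\lambda\bI)^{-1}\bA_{jk}\|\Big)\|\bx_j\|.
\end{align}
Dividing by $\|\bx_j\|$ yields $\sum_{k \neq j}\|(\bA_{jj}-\lambda\bI)^{-1}\bA_{jk}\| \ge 1$, that is, $\lambda \in G_j$. Therefore $\sigma(\A) \subseteq \bigcup_j G_j$.

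Equivalently, if $\lambda$ lay in no $G_j$, then $\A - \lambda\bI$ would be row block strictly diagonally dominant in the sense of Definition~\ref{def:Block_Diag_Dominance}, with strict inequality. Lemma~\ref{lemma:Block_Diag_Dominance_Invertible} would then make $\A - \lambda\bI$ non-singular, contradicting that $\lambda$ is an eigenvalue. The only genuinely delicate points are the two already flagged. One must use a consistent, induced norm, which is what licenses the submultiplicative step. One must also treat singular diagonal shifts by the convention above. The remainder is a routine triangle-inequality estimate.
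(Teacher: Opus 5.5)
Your proof is correct and follows the intended route. The paper gives no proof of its own but imports the result as Corollary 3.2 of \cite{Echeverria_2018}, where it is obtained as in your last paragraph, by applying Lemma~\ref{lemma:Block_Diag_Dominance_Invertible} to $\A - \lambda\bI$, and your maximal-block estimate is the standard proof of that lemma written out. Your explicit convention that $\lambda \in G_j$ whenever $\bA_{jj} - \lambda\bI$ is singular is genuinely needed, since without it the statement as printed fails for a block-diagonal $\A$ with non-singular diagonal blocks, so keep it.
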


In most cases, the norm for Theorem \ref{thm:Block_Gerschgorin} will be the operator norm $\|\cdot\|_{\op}$.

\section{Results}

The goal of this section is to determine the conditions under which the delay matrix $\M_{n,m}$ is full row-rank so that the LRNN can properly embed the dynamics of the input time series into the output latent space.
We build to the desired result by considering three cases of increasing complexity and generality. 
The first is the scalar case, for which we determine sufficient conditions for the non-singularity of the delay matrix $\M_{n,m}$ based on the weight parameter $\omega$, explicitly compute the singular values, bound the condition number, and calculate the generalized determinant. 
In the second case, where the weight matrix $\bW \in \C^{m \times m}$ is Hermitian, we show that all previous results generalize from the scalar case. 
In the third case, we consider arbitrary $\bW \in \C^{m \times m}$ and define sufficient conditions--albeit rather sub-optimal ones--for the non-singularity of $\M_{n,m}$, establish bounds on its singular values and thus its condition number in certain regimes. 
In each case, without loss of generality, we will ignore the negative sign on the weight matrix $\bW$.

\subsection{Scalar Case}

In the scalar case, we consider single-variable LRNNs of the form $h_{k+1} = \omega h_{k} + x_k + b$, where $h_k, x_k, \omega, b \in \C$. 
By collecting $n$ lagged observations and concatenating them, we form the delay matrix $\bM_n \in \C^{n \times (n+1)}$:
\begin{align}
    \bM_n: =
    \begin{bmatrix}
        1 & \omega & 0 & \cdots & 0 \\
        0 & 1 & \omega & \ddots & \vdots \\
        \vdots & \ddots & \ddots & \ddots & 0 \\
        0 & \cdots & 0 & 1 & \omega
    \end{bmatrix}  
    \in \C^{n \times n+1}
    \label{eq:M_Scalar}
\end{align}

To characterize $\bM_n$, we study the following Toeplitz tri-diagonal matrix:
\begin{align}
    \bA_n := \bM_n \bM_n^* = 
    \begin{bmatrix}
        1 + |\omega|^2 & \omega & 0 & \cdots & 0 \\
        \bar{\omega} & 1 + |\omega|^2 & \omega & \ddots & \vdots \\
        0 & \bar{\omega} & \ddots & \ddots & 0 \\
        \vdots & \ddots & \ddots & \ddots & \omega \\
        0 & \cdots & 0 & \bar{\omega} & 1+ |\omega|^2
    \end{bmatrix} \in \C^{n \times n}
    \label{eq:MM_Scalar}
\end{align}

\subsubsection{Singular Values and Condition Number}

The singular values of $\bM_n$ satisfy $\sigma_j(\bM_n) = \sqrt{\lambda_j(\bA_n)}$ for $j \in \{1, ..., n\}$. 
For Toeplitz tri-diagonal matrices
\begin{align}
    \bT_n = \begin{bmatrix}
        a & b &  &\\
        c & a & \ddots & \\
          & \ddots & \ddots & b \\
          &  & c & a &
    \end{bmatrix} \in \C^{n \times n} 
    \quad \mbox{for} \quad a,b,c \in \C,
\label{eq:Toeplitz_Tridiag_Mat}
\end{align}
it is known that its eigenvalues are ~\cite{Smith_1985, Kulkarni_1999}:
\begin{align}
    \lambda_j(\bT_n) = a + 2 \sqrt{bc} \cos\left(\frac{j \pi}{n + 1}\right) \quad \mbox{for} \quad j \in \{1, ..., n\}.
    \label{eq:Toeplitz_Tridiag_Mat_Eigs}
\end{align}

This allows us to find $\sigma_j(\bM_n)$.

\begin{proposition}         
    \label{prop:Scalar_Mat_Singular_Values}
    Let $\bM_n$ be as in \eqref{eq:M_Scalar}. Then,
    \begin{align}
        \sigma_j(\bM_n) = \sqrt{|\omega|^2 + 2 |\omega| \cos \left(\frac{j \pi}{n + 1}\right) + 1} \quad \mbox{for} \quad j \in \{1,..., n\}.
    \end{align}
\end{proposition}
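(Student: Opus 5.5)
The plan is to apply the Toeplitz tri-diagonal eigenvalue formula \eqref{eq:Toeplitz_Tridiag_Mat_Eigs} to $\bA_n = \bM_n \bM_n^*$ from \eqref{eq:MM_Scalar}, and then take square roots using $\sigma_j(\bM_n) = \sqrt{\lambda_j(\bA_n)}$.

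First, I will verify the product $\bA_n = \bM_n\bM_n^*$ entrywise.
\begin{itemize}
\item Row $j$ of $\bM_n$ has $1$ in column $j$ and $\omega$ in column $j+1$.
\item The $(j,j)$ entry is therefore $1\cdot 1 + \omega\bar\omega = 1+|\omega|^2$.
\item The $(j,j+1)$ entry is $\omega\cdot \bar 1 = \omega$, and the $(j+1,j)$ entry is $\bar\omega$.
\item All other entries vanish, since rows $j$ and $k$ with $|j-k|\ge 2$ have disjoint supports.
\end{itemize}
So $\bA_n$ is of the form \eqref{eq:Toeplitz_Tridiag_Mat} with $a = 1+|\omega|^2$, $b=\omega$, $c = \bar\omega$.

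Next, I substitute into \eqref{eq:Toeplitz_Tridiag_Mat_Eigs}. Since $bc = \omega\bar\omega = |\omega|^2$ is a nonnegative real number, the principal square root gives $\sqrt{bc} = |\omega|$ unambiguously. This yields
\[
\lambda_j(\bA_n) = 1 + |\omega|^2 + 2|\omega|\cos\left(\frac{j\pi}{n+1}\right), \qquad j\in\{1,\dots,n\}.
\]
These values are real, as they must be for a Hermitian $\bA_n$. They are also nonnegative:
\[
1+|\omega|^2 - 2|\omega| = (1-|\omega|)^2 \ge 0,
\]
and since $|\cos(j\pi/(n+1))|<1$ the inequality is in fact strict. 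This is consistent with $\bA_n = \bM_n\bM_n^*$ being positive semidefinite.

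Finally, taking square roots gives the stated formula for $\sigma_j(\bM_n)$. These are indeed the $n$ nonzero singular values, listed in decreasing order as $j$ increases, because cosine is decreasing on $(0,\pi)$. Matching the indexing convention $\sigma_1 \ge \dots \ge \sigma_n$ is then immediate.

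The only delicate point is the choice of branch of $\sqrt{bc}$ in the cited eigenvalue formula. I will handle it by noting that $bc\ge 0$. Alternatively, note that replacing $\sqrt{bc}$ by $-\sqrt{bc}$ only permutes the set $\{\cos(j\pi/(n+1))\}_{j=1}^n$ via $j\mapsto n+1-j$, so the multiset of eigenvalues is unaffected. The edge case $\omega=0$ is trivial: all singular values equal $1$, which agrees with the formula.
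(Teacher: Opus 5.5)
Your proposal is correct and follows the same route as the paper. The paper also identifies $\bA_n=\bM_n\bM_n^*$ as Toeplitz tri-diagonal with $a=1+|\omega|^2$, $b=\omega$, $c=\bar\omega$, applies \eqref{eq:Toeplitz_Tridiag_Mat_Eigs}, and takes square roots; your extra checks (the entrywise product, the branch $\sqrt{bc}=|\omega|$, strict positivity of the eigenvalues, and the decreasing ordering) are sound additions to the paper's one-line argument.
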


\begin{proof}
    Apply \eqref{eq:Toeplitz_Tridiag_Mat_Eigs} to \eqref{eq:MM_Scalar} by letting $a = 1 + |\omega|^2, b = \omega$ and $c = \bar{\omega}$. 
\end{proof}

With the singular values of $\bM_n$, we can bound its condition number.

\begin{proposition}\label{prop:Scalar_Mat_Cond}
    Let $\bM_n$ be as in \eqref{eq:M_Scalar}. 
    Then,
    \begin{align}
        1 \le \kappa(\bM_n) \le \begin{cases}
            \bigg\vert\frac{|\omega| + 1}{|\omega| - 1} \bigg\vert \qquad \mbox{for} \quad |\omega| \in \{(0,1)\cup(1,\infty)\} \\
            \frac{2}{\pi}(n+1) \quad \mbox{for} \quad |\omega| = 1
        \end{cases}
    \end{align}
    For $|\omega| \in \{(0,1)\cup(1,\infty)\}$, $\sigma_{\max}(\bM_n) \le | |\omega| - 1|$ and $\sigma_{\min}(\bM_n) \ge | |\omega| - 1|$. 
\end{proposition}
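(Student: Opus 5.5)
The plan is to read everything off the explicit singular values in Proposition \ref{prop:Scalar_Mat_Singular_Values}. Write $r = |\omega|$ and $\theta_j = \frac{j\pi}{n+1}$, so that
\begin{align*}
\sigma_j(\bM_n)^2 = r^2 + 2r\cos\theta_j + 1 .
\end{align*}
As $j$ runs over $\{1,\dots,n\}$, $\theta_j$ runs over $(0,\pi)$ and $\cos\theta_j$ is strictly decreasing in $j$. Hence $\sigma_{\max}(\bM_n) = \sigma_1(\bM_n)$ and $\sigma_{\min}(\bM_n) = \sigma_n(\bM_n)$. The lower bound $\kappa(\bM_n) \ge 1$ is immediate from the definition $\kappa = \sigma_{\max}/\sigma_{\min}$.

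For the case $r \neq 1$, I will bound the cosine crudely using $-1 < \cos\theta_j < 1$. This gives
\begin{align*}
(r-1)^2 < \sigma_j(\bM_n)^2 < (r+1)^2 \quad \text{for all } j.
\end{align*}
Therefore $\sigma_{\min}(\bM_n) \ge |r-1| > 0$, so $\bM_n$ has full row rank, and $\sigma_{\max}(\bM_n) \le r+1$. Dividing the two bounds gives $\kappa(\bM_n) \le \left|\frac{r+1}{r-1}\right|$, uniformly in $n$.

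I note that the upper bound on $\sigma_{\max}$ must be $|\omega|+1$, not $||\omega|-1|$ as printed in the last sentence of the statement. The printed version would force all singular values to equal $||\omega|-1|$, which is false. The condition-number bound as stated is consistent only with $\sigma_{\max} \le |\omega|+1$, so I will prove that corrected form.

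The case $r = 1$ is the main obstacle, because the crude bound now gives $\sigma_{\min} \ge 0$ and says nothing. Here I will compute exactly with half-angle identities:
\begin{align*}
\sigma_j(\bM_n)^2 = 2 + 2\cos\theta_j = 4\cos^2(\theta_j/2),
\end{align*}
so $\sigma_{\max}(\bM_n) = 2\cos\bigl(\tfrac{\pi}{2(n+1)}\bigr)$. For the smallest one,
\begin{align*}
\sigma_n(\bM_n) = 2\cos\Bigl(\frac{n\pi}{2(n+1)}\Bigr) = 2\sin\Bigl(\frac{\pi}{2(n+1)}\Bigr).
\end{align*}
Setting $x = \frac{\pi}{2(n+1)} \in (0,\pi/4]$, this gives
\begin{align*}
\kappa(\bM_n) = \cot x \le \frac{1}{x} = \frac{2}{\pi}(n+1),
\end{align*}
using $\tan x \ge x$ on $[0,\pi/2)$. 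Both $\sigma_{\max}$ and $\sigma_{\min}$ are positive here, so $\bM_n$ remains full rank, but the bound on $\kappa$ grows linearly in $n$.

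Finally, the case $\omega = 0$ is trivial: $\bM_n = [\bI \; \bzero]$ and $\kappa = 1$. It is excluded in the statement in any case.
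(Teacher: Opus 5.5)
Your proof is correct, and your correction of the last sentence is right: the paper's own argument establishes $\sigma_{\max}(\bM_n) \le |\omega| + 1$, and that is the bound the condition-number estimate uses.

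\textbf{Case $|\omega| \neq 1$.} Here your route differs from the paper's. The paper does not use the explicit singular values. It applies Gerschgorin's theorem (Theorem \ref{thm:Gerschgorin}) to $\bA_n = \bM_n \bM_n^*$, with centers $1+|\omega|^2$ and radii at most $2|\omega|$, to trap every eigenvalue in $[(|\omega|-1)^2, (|\omega|+1)^2]$.

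\textbf{Case $|\omega| = 1$.} Here the two proofs essentially agree. The paper falls back on the exact eigenvalues $4\sin^2\bigl(\frac{j\pi}{2(n+1)}\bigr)$ and bounds $\cot x$ via the Taylor series of $\tan x$, which amounts to your inequality $\tan x \ge x$.

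\textbf{What each route buys.} Your route is shorter and slightly sharper, since Proposition \ref{prop:Scalar_Mat_Singular_Values} is already available. It gives strict inequalities for $|\omega|>0$ and identifies $\sigma_1$ and $\sigma_n$ as the extremal singular values. It also treats every $\omega$ with $|\omega| = 1$ uniformly, whereas the paper only works out $\omega = 1$ explicitly and relies implicitly on the spectrum depending only on $|\omega|$. The paper's Gerschgorin route never uses the closed-form spectrum. That makes it the template for the block Gerschgorin arguments later in the paper (Propositions \ref{prop:Block_Mat_Gen_S_Max} and \ref{prop:Block_Mat_Gen_Cond_Bds}), where $\bW$ is non-Hermitian and no explicit formula is available.
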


\begin{proof}
    Using Theorem \ref{thm:Gerschgorin}, we can establish upper and lower bounds on $\sigma_{\max}(\bM_n)$ and $\sigma_{\min}(\bM_n)$, respectively, to compute $\kappa(\bM_n)$.
    To ensure $\kappa(\bM_n) < +\infty$, we require $\sigma_{\min}(\bM_n) > 0$, which is a sufficient condition for $\bA_n$ to be strictly row diagonally dominant  (and, since $\bA_n$ is Hermitian, this would also imply strict column diagonal dominance). 
    The magnitudes of the diagonal entries of $\bA_n$ are:
    \begin{align*}
        |a_{jj}| = |1 + |\omega|^2| = 1 + |\omega|^2 \quad \mbox{for} \quad j \in \{1, ..., n\}.
    \end{align*}
    The radii of the Gerschgorin disks are:
    \begin{align*}
        R_j = \begin{cases}
            |\omega| \qquad \mbox{for} \quad j \in \{1,n\} \\
            2 |\omega| \, \, \, \quad \mbox{for} \quad j \in \{2,..., n-1\}.
        \end{cases}
    \end{align*}
    When $j \in \{1,n\}$, $a_{jj} = 1 + |\omega|^2 > |\omega| = R_j$ for all $\omega \in \C$, so we only need to consider $j \in \{2,..., n-1\}$. 
    For strict diagonal dominance of $\bA_n$, we require $1 + |\omega|^2 > 2 |\omega|$, which holds for $\omega \in \{z \in \C | |z| \neq 1\}$--the entire complex plane excluding $\omega$ on the unit circle. 
    If we avoid $\omega \in \{z \in \C| |z| = 1\}$, then $\bA_n$ is strictly diagonally dominant, and
    \begin{align*}
        &|\lambda_j(\bA_n) - a_{ii}| \le R_j \quad \mbox{for} \quad j \in \{2,..., n-1\}.
    \end{align*}
    Since $\sigma_j(\bM_n) = \sqrt{\lambda_j(\bA_n)}$,
    \begin{align*}
        |\sigma_j(\bM_n)^2 - (1 + |\omega|^2)| \le 2 |\omega| \quad \mbox{for} \quad j \in \{2,..., n-1\}.
    \end{align*}
    This gives rise to the two-sided inequality, which also holds for $j = 1$ and $j = n$:
    \begin{align*}
        ||\omega| - 1| \le \sigma_j(\bM_n) \le ||\omega| + 1| \quad \mbox{for} \quad j \in \{1,..., n\}
    \end{align*}
    Thus, $\sigma_{\max}(\bM_n) \le ||\omega| + 1|$ and $\sigma_{\min}(\bM_n) \ge ||\omega| - 1|$. 
    In this regime,
    \begin{align*}
        \kappa(\bM_n) = \frac{\sigma_{\max}(\bM_n)}{\sigma_{\min}(\bM_n)} = \frac{||\omega| + 1|}{||\omega|-1|}.
    \end{align*}
    When $\omega = 1$, $\bA_n$ is not strictly diagonally dominant, yet it resembles the matrix for the discrete Laplacian ~\cite{LeVeque_2007}. 
    From \eqref{eq:Toeplitz_Tridiag_Mat_Eigs}, its eigenvalues are:
    \begin{align*}
        \lambda_j(\bA_n) = 2\left(1- \cos\left(\frac{j \pi}{n+1}\right)\right) = 4 \sin^2 \left(\frac{j \pi}{2(n+1)}\right) \quad \mbox{for} \quad j \in \{1, ..., n\},
    \end{align*}
    and $\sigma_j(\bM_n) = 2 \big \vert \sin\left(\frac{j \pi}{2(n+1)}\right) \big \vert$. 
    When $j = n$, $\sigma_{\max}(\bM_n) = 2 \big \vert \sin\left(\frac{n \pi}{2(n+1)}\right) \big \vert$, and when $j = 1$, $\sigma_{\min}(\bM_n) = 2 \big \vert \sin\left(\frac{\pi}{2(n+1)}\right) \big \vert$, so:
    \begin{align*}
        \kappa(\bM_n) 
        = \left| \frac{\sin\left(\frac{n \pi}{2(n+1)}\right)}{\sin\left(\frac{\pi}{2(n+1)}\right)} \right|
        = \left|\cot\left(\frac{\pi}{2(n+1)}\right) \right|.
    \end{align*}
    
    To bound $\kappa(\bM_n)$ independently of $n$, note that $\tan(x)$ has a convergent Taylor series about $x = 0$, valid for $|x| < \frac{\pi}{2}$, given by $\tan(x) = \sum_{k = 1}^\infty \frac{T_k}{(2k-1)!} x^{2k-1}$, where $T_k = \frac{4^k (4^k -1) |B_{2k}|}{2k}$, with $B_{k}$ being the $k$th Bernoulli number ~\cite{Weisstein}.
    Since each $T_k \ge 0$,
    \begin{align*}
        \bigg|\cot\left(\frac{1}{x}\right)\bigg| = \frac{1}{|\tan\left(\frac{1}{x}\right)|} = \frac{1}{|x + \cO(x^3)|} < \frac{1}{x}.
    \end{align*}
    
    Provided $|x| = \big|\frac{\pi}{2(n+1)}\big| < \frac{\pi}{2}$, the above inequality holds, which is true for any integer $n > 0$.
    Thus, since $\cot\left(\frac{\pi}{2(n+1)}\right)$ is bounded above by an oblique asymptote, 
    \begin{align*}
        \kappa(\bM_n) < \frac{2}{\pi} (n+1).
    \end{align*}
    
    Hence, when $\omega = 1$, $\kappa(\bM_n)$ grows linearly with the number of lags $n$.
\end{proof}

\begin{remark}
    What Proposition \ref{prop:Scalar_Mat_Cond} implies for LRNN is:
    \begin{itemize}
        \item When $|\omega| \neq 1$, the bound on $\kappa(\bM_n)$ is independent of the number of embedding lags $n$. 
        Increasing the number of lags, even indefinitely, will not lead the delay matrix to be catastrophically ill-conditioned, at least when $|\omega| \neq 1$. 
        \item When $\omega = 0$, $\bA_n$ reduces to the $n \times n$ identity matrix and $\kappa(\bM_n) = 1$. 
        The mapping $\bM_n$ is as stable as it can be, yet this would mean no information of the past lags $h_{k-1}$ would be used to determine the present value $h_k$ of the series.
        In the context of \eqref{eq:LRNN}, the explicit solution for the latent space states can be read from $\bM_n \Psi(\bh_k) = \Phi(\bx_k) + b \boldsymbol{1}^\top$, where $\bM_n$ is the $n \times n$ identity matrix with a column of zeros appended.
        The explicit values would be $\{h_{k-\ell}\}_{\ell = 0}^n = y_{k-(\ell+1)} + b$, where $h_{k - (n+1)}$ is a free-variable, which we would fix as $y_{k-(n+2)}$ so that, as the delay-coordinate vector shifts for different values of $k$, the flow will be consistent in the latent space.
        \item The regime where $|\omega| \ll 1$ corresponds to a small weighting parameter in the LRNN and implies past states receive less attention than the present one.
        \item When $\omega = 1$, $\kappa(\bM_n)$ grows linearly with the number of lags $n$. 
        Yet, $\omega = 1$ is not the only potential issue. 
        As $\omega \to 1^-$ and as $\omega \to 1^+$, $\kappa(\bM_n) \to \infty$, so $\bM_n$ becomes less stable.
        \item The regime where $|\omega| \gg 1$ corresponds to a large weighting parameter for the LRNN and would mean previous states receive greater weighting than the present state.
        Note that $\lim_{|\omega| \to \infty} \kappa(\bM_n) = 1$, meaning $\bM_n$ is relatively well-behaved, especially as the past states are weighted more heavily. 
    \end{itemize}
\end{remark}

\begin{figure}
    \centering
    \begin{subfigure}[t]{.5\textwidth}
        \centering
        \includegraphics[width=\linewidth]{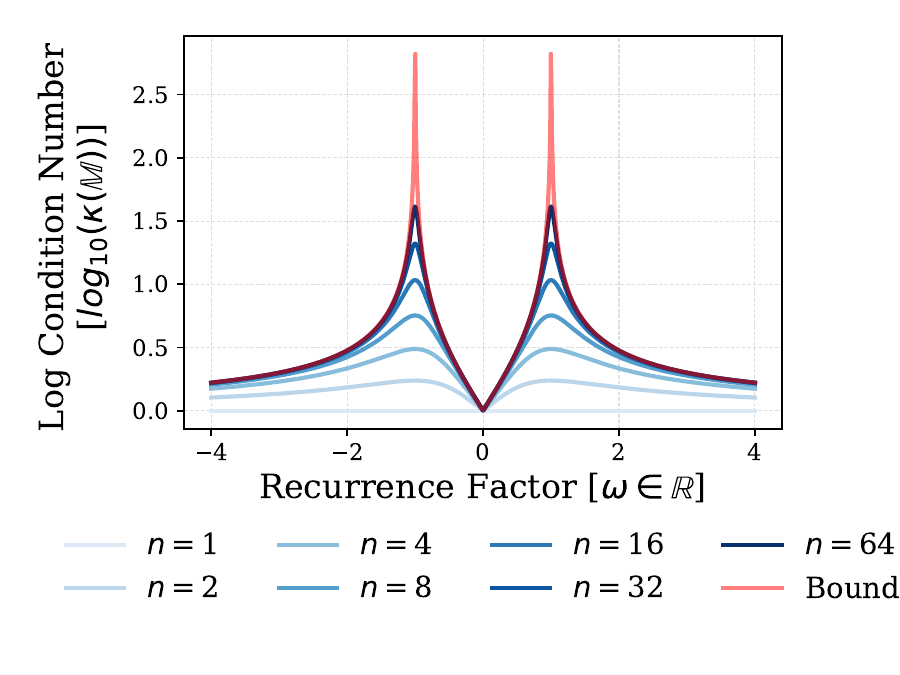}
        \vspace{-1.25cm}
        \caption{\centering}
        \label{fig:RecMat_Scalar_Cond_Gen}
    \end{subfigure}%
    \begin{subfigure}[t]{.5\textwidth}
        \centering
        \includegraphics[width=\linewidth]{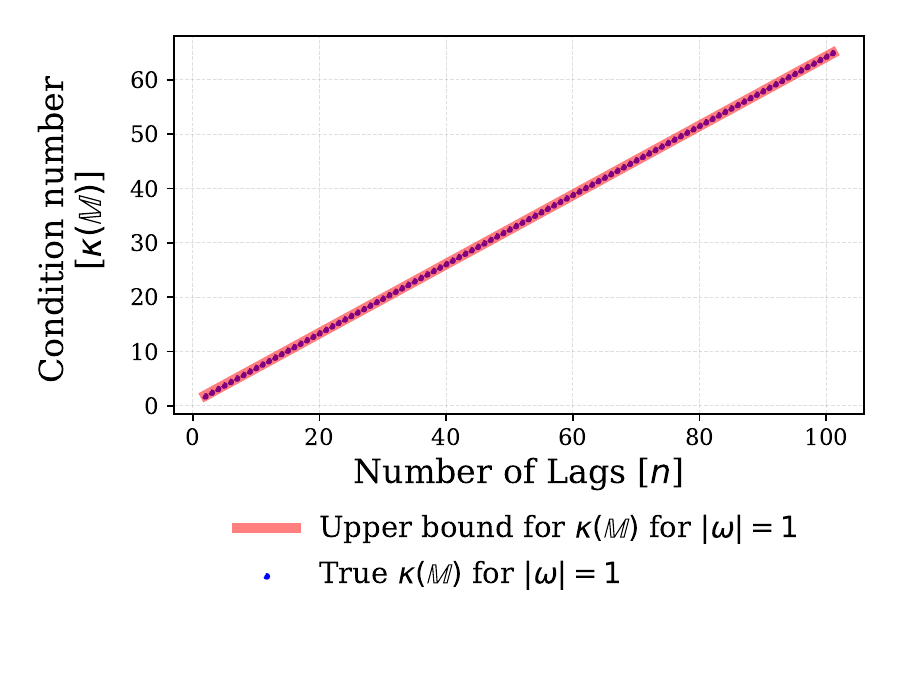}
        \vspace{-1.25cm}
        \caption{\centering}
        \label{fig:RecMat_Scalar_Cond_1}
    \end{subfigure}
    \caption{Log condition numbers $\kappa(\bM_n)$ for the scalar case of the delay matrix $\bM_n$ as in \eqref{eq:M_Scalar} with varying lags $n$, along with the bound as in Proposition \ref{prop:Scalar_Mat_Cond}, are plotted against the weight parameter $\omega$. 
    (a) For $\omega$ away from $|\omega| \approx 1$, $\kappa(\bM_n)$ is of relatively low magnitude, indicating $\bM_n$ is a stable embedding. For $|\omega|$ near $1$, $\bM_n$ becomes ill-conditioned. 
    (b) For $|\omega| = 1$, the bound on $\kappa(\bM_n)$ increases linearly on the number of lags $n$; increasing $n$ makes $\bM_n$ an unstable embedding.}
    \label{fig:RecMat_Scalar_Conds}
\end{figure}

\subsubsection{Determinant}

We now compute $S(\bM_n) = \sqrt{|\det(\bA_n)|}$ as a heuristic for the sensitivity of $\bM_n$, in that it can quantify the degree to which $\bM_n$ distorts the output space relative to the input space. 
The determinant of a Toeplitz tri-diagonal matrix $\bT_n$ \eqref{eq:Toeplitz_Tridiag_Mat} from ~\cite{Cinkir_2014} is:
\begin{align}\label{eq:Tridiag_Toeplitz_Det}
    \det(\bT_n) = \frac{1}{\sqrt{a^2-4bc}} \left(\left(\frac{a + \sqrt{a^2 - 4bc}}{2}\right)^{n+1} - \left(\frac{a - \sqrt{a^2 - 4bc}}{2}\right)^{n+1} \right)
\end{align}
for $a^2 - 4bc \neq 0$, and when $a^2 - 4bc = 0$, the determinant is:
\begin{align}\label{eq:Tridiag_Toeplitz_Det_0}
    \det(\bT_n) = (n+1) \left(\frac{a}{2}\right)^n.
\end{align}
Applying \eqref{eq:Tridiag_Toeplitz_Det} and \eqref{eq:Tridiag_Toeplitz_Det_0} to \eqref{eq:MM_Scalar} provides the following result.

\begin{proposition} \label{prop:Scalar_Mat_Det_Bds}
    Let $\bM_n$ be as in \eqref{eq:M_Scalar}. Then, 
    \begin{align}
        S(\bM_n) = \sqrt{|\det(\bA_n)|} \le \begin{cases}
            \frac{1}{\sqrt{1 - |\omega|^2}} \qquad |\omega| < 1 \\
            \sqrt{n+1} \qquad |\omega| = 1 \\
            \frac{|\omega|^n}{\sqrt{1 - \frac{1}{|\omega|^2}}} \qquad |\omega| > 1
        \end{cases}
    \end{align}
\end{proposition}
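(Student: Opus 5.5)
The plan is to compute $\det(\bA_n)$ in closed form by applying the Toeplitz tri-diagonal determinant formulas \eqref{eq:Tridiag_Toeplitz_Det} and \eqref{eq:Tridiag_Toeplitz_Det_0} to \eqref{eq:MM_Scalar}, and then bound the result in each regime of $r := |\omega|$. As in Proposition \ref{prop:Scalar_Mat_Singular_Values}, I set $a = 1 + r^2$, $b = \omega$, $c = \bar{\omega}$, so that $bc = r^2$. The discriminant is then
\begin{align*}
a^2 - 4bc = (1+r^2)^2 - 4r^2 = (1-r^2)^2,
\end{align*}
with square root $|1 - r^2|$. This discriminant vanishes exactly when $r = 1$, which separates the generic case from the degenerate one.

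\textbf{Generic case $r \neq 1$.} The two quantities $\frac{a \pm |1-r^2|}{2}$ are $1$ and $r^2$; which sign gives which depends on whether $r<1$ or $r>1$. The expression in \eqref{eq:Tridiag_Toeplitz_Det} is antisymmetric under swapping the two roots, and so is the prefactor $1/\sqrt{a^2-4bc}$ if we use the signed root $1-r^2$. Hence in both regimes I expect
\begin{align*}
\det(\bA_n) = \frac{1 - r^{2(n+1)}}{1 - r^2} = \sum_{\ell=0}^{n} r^{2\ell}.
\end{align*}
This can be double-checked via the product of the eigenvalues from Proposition \ref{prop:Scalar_Mat_Singular_Values}, or via the three-term recurrence $D_n = (1+r^2) D_{n-1} - r^2 D_{n-2}$.

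With this formula the bounds are immediate.
\begin{itemize}
\item For $r<1$, the geometric sum is bounded by the full series, so $\det(\bA_n) \le \frac{1}{1-r^2}$. Taking square roots gives the first case.
\item For $r>1$, I write $\det(\bA_n) = \frac{r^{2n+2}-1}{r^2-1}$, drop the $-1$ in the numerator, and divide the numerator and denominator by $r^2$. This gives $\det(\bA_n) \le \frac{r^{2n}}{1 - r^{-2}}$, whose square root is the third case.
\end{itemize}

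\textbf{Degenerate case $r=1$.} Here I use \eqref{eq:Tridiag_Toeplitz_Det_0} with $a = 2$, which gives $\det(\bA_n) = (n+1)\cdot 1^n = n+1$. Hence $S(\bM_n) = \sqrt{n+1}$, with equality. This agrees with the limit of the geometric sum as $r \to 1$.

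\textbf{Main obstacle.} The only delicate step is the branch bookkeeping for $\sqrt{a^2-4bc}$ in \eqref{eq:Tridiag_Toeplitz_Det}. The roots $1$ and $r^2$ swap roles across $r=1$, and one must check that the formula yields the same positive value in both regimes. The antisymmetry observation handles this. Alternatively, I would verify the closed form directly by induction on the recurrence $D_n = (1+r^2)D_{n-1} - r^2 D_{n-2}$ with $D_0 = 1$ and $D_1 = 1+r^2$, which sidesteps square roots entirely.
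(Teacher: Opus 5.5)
Your proposal is correct and follows the paper's route: apply the Toeplitz tridiagonal determinant formula with $a = 1+|\omega|^2$ and $bc = |\omega|^2$ to obtain $\det(\bA_n) = \sum_{k=0}^{n} |\omega|^{2k}$ for $|\omega| \neq 1$ and $n+1$ for $|\omega| = 1$, then bound the geometric sum in each regime. Your explicit treatment of the branch of $\sqrt{a^2-4bc}$, via the antisymmetry observation or the recurrence $D_n = (1+|\omega|^2)D_{n-1} - |\omega|^2 D_{n-2}$, is slightly more careful than the paper, which simply takes the root to be $1-|\omega|^2$ without comment.
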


\begin{proof}
    For $\bA_n$ from \eqref{eq:MM_Scalar}, let $a = 1 + |\omega|^2$, $b = \omega$ and $c = \bar{\omega}$. 
    The discriminant is: $a^2 - 4bc = (1+|\omega|^2)^2 - 4 \omega \bar{\omega} = (1 - |\omega|^2)^2$. 
    When $|\omega| \neq 1$, $a^2 - 4bc \neq 0$, so
    \begin{align*}
        \det(\bA_n) &= \frac{1}{1 - |\omega|^2} \left(\left(\frac{1 + |\omega|^2 + (1 - |\omega|^2)}{2}\right)^{n+1} - \left(\frac{1 + |\omega|^2 - (1 - |\omega|^2)}{2}\right)^{n+1} \right) \\
        &= \frac{1 - |\omega|^{2n+2}}{1 - |\omega|^2} 
        = \sum_{k = 0}^n |\omega|^{2k},
    \end{align*}
    which is $S(\bM_n)^2$. 
    The following bounds come from the second-to-last expression in the above: for $|\omega| < 1$, $S(\bM_n) \le \frac{1}{\sqrt{1-|\omega|^2}}$, and for $|\omega| > 1$, $S(\bM_n) \le \frac{|\omega|^n}{\sqrt{1 - \frac{1}{|\omega|^2}}}$, since $\frac{1}{|\omega|} < 1$. 
    When $|\omega| = 1$, $\det(\bA_n) = (n+1) \left(\frac{1 + |\omega|^2}{2}\right)^n = (n+1)$.
\end{proof}

\begin{remark}
Proposition \ref{prop:Scalar_Mat_Det_Bds} leads to the following conclusions:
\begin{itemize}
    \item When $|\omega| < 1$, the bound on the generalized determinant $S(\bM_n)$ is independent of the number of lags $n$. 
    However, the bound itself is not necessarily bounded, since $S(\bM_n) \to \infty$ as $|\omega| \to 1^-$.
    \item When $|\omega| = 1$, the bound on $S(\bM_n)$ depends on $n$, but only linearly.
    \item When $|\omega| > 1$, the bound on $S(\bM_n)$ depends exponentially on $n$ and is effectively unbounded. 
    In this regime, $\bM_n$ severely distorts the space and is highly sensitive, despite the condition number being small in this regime. \\
\end{itemize}
Thus, for $\bM_n$ to be a stable embedding, it is most reasonable to consider $|\omega| \le 1$. 
Note that in \eqref{eq:LRNN_Delay_Coordinates_Sol} we consider $\bM_n^\dagger$, so it would make sense to consider $S(\bM_n)^{-1}$. 
The behavior is essentially the same, considering that $\bM_n$ being unstable in the sense that $S(\bM_n) \to \infty$ corresponds to $S(\bM_n^\dagger) \to 0$.
\end{remark}

\begin{figure}
    \centering
    \begin{subfigure}[t]{.5\textwidth}
        \centering
        \includegraphics[width=\linewidth]{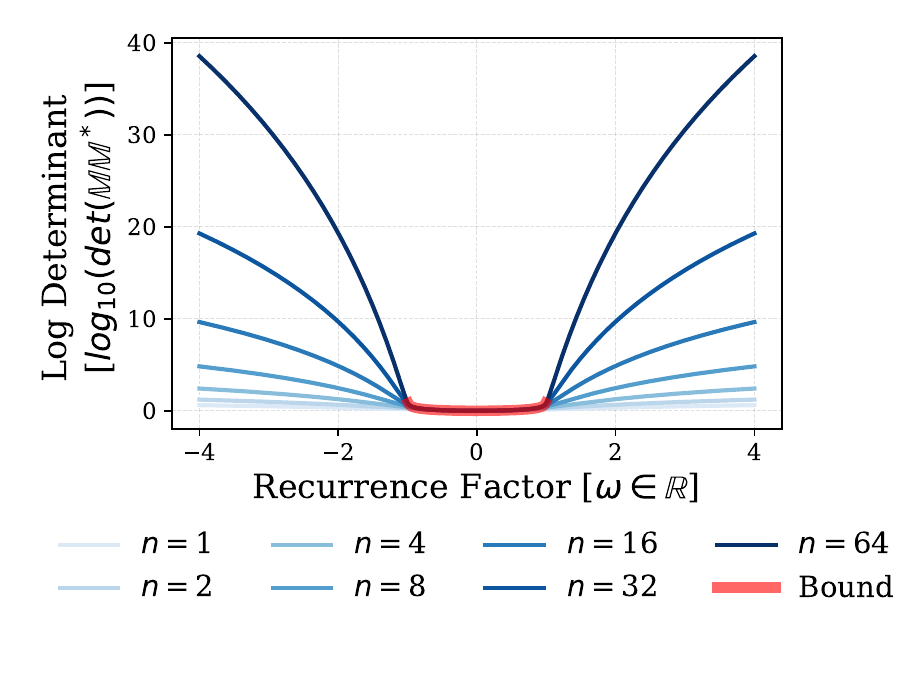}
        \vspace{-1.25cm}
        \caption{\centering}
        \label{fig:RecMat_Scalar_Dets}
    \end{subfigure}%
    \begin{subfigure}[t]{.5\textwidth}
        \centering
        \includegraphics[width=\linewidth]{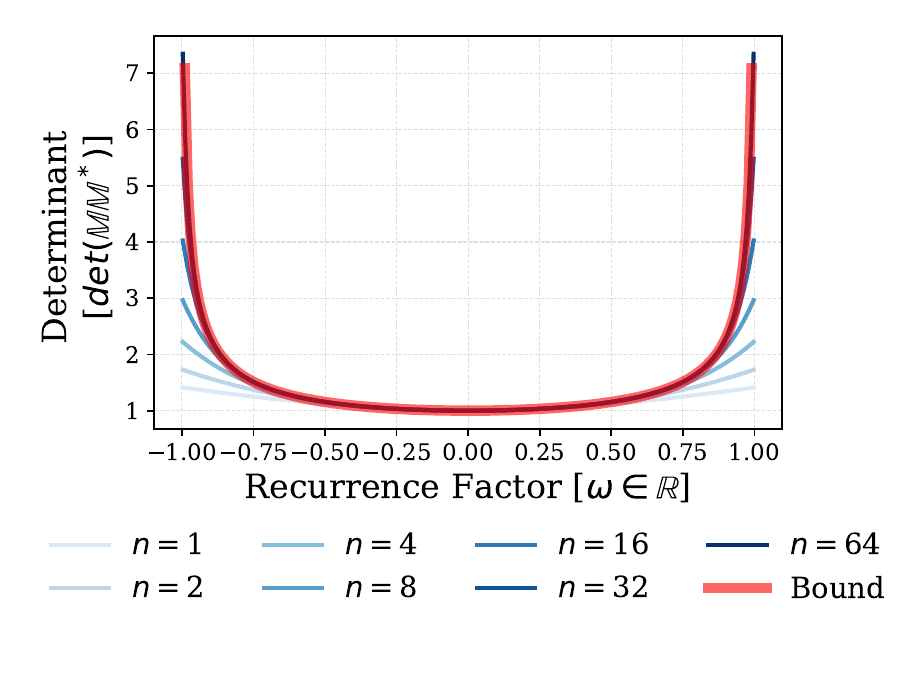}
        \vspace{-1.25cm}
        \caption{\centering}
        \label{fig:RecMat_Scalar_Dets_w1}
    \end{subfigure}
    \caption{Log generalized determinants $S(\bM_n)$ for the scalar case of delay matrices $\bM_n$ as in \eqref{eq:M_Scalar} with varying lags $n$, along with the bound for $|\omega| < 1$ as in Proposition \ref{prop:Scalar_Mat_Det_Bds}, are plotted against the weight parameter $\omega$. 
    (a) For $\omega \notin \{\omega : |\omega| < 1\}$, $S(\bM_n)$ becomes unbounded in an exponential fashion, so $|\omega| > 1$ produces an unstable delay matrix $\bM_n$. 
    (b) While for most $\omega \in (-1,1)$, $S(\bM_n)$ is near $1$, indicating $\bM_n$ preserves ``volume" and is thus a relatively stable embedding, as $|\omega| \to 1^-$, $S(\bM_n) \to \infty$ and the embedding destabilizes.}
    \label{fig:RecMat_Scalar_Dets_Panel}
\end{figure}

\subsection{Hermitian Block Case}

Having examined the scalar case, we extend the problem to the case where the LRNN is defined by a matrix $\bW$, allowing for more expressive interaction between various states. 
A case that lends itself well to analysis is when $\bW$ in \eqref{eq:LRNN} is Hermitian, as it is unitarily diagonalizable and $\bW = \bW^*$. 
Consider the block matrix $\M_{n,m}\in \C^{mn \times m(n+1)}$ with the $m \times m$ identity matrix $\bI$ repeated $n$ times along the main diagonal and Hermitian $\bW \in \C^{m \times m}$ along the super-diagonal, where without loss of generality we disregard the sign of $\bW$:
\begin{align}
    \M_{n,m}: =
    \begin{bmatrix}
        \bI & \bW & \bzero & \cdots & \bzero \\
        \bzero & \bI & \bW & \ddots & \vdots \\
        \vdots & \ddots & \ddots & \ddots & \bzero \\
        \bzero & \cdots & \bzero & \bI & \bW
    \end{bmatrix} 
    \in \C^{m n \times m (n+1)}.
    \label{eq:M_hermitian}
\end{align}
Like the scalar case, we wish to know the singular values, condition number, and determinant of $\M_{n,m}$, among other properties. 
As before, to more easily compute the singular values of $\M_{n,m}$, we consider the eigenvalues of the following matrix:
\begin{align}
    \A_{n,m} := \M_{n,m}\M_{n,m}^* = \begin{bmatrix}
        \bI + \bW^2 & \bW & \bzero & \cdots & \bzero \\
        \bW & \bI + \bW^2 & \bW & \ddots & \vdots \\
        \bzero & \bW & \ddots & \ddots & \bzero \\
        \vdots & \ddots & \ddots & \ddots & \bW \\
        \bzero & \cdots & \bzero & \bW & \bI + \bW^2
    \end{bmatrix} \in \C^{m n \times m n}.
\label{eq:MM_hermitian}
\end{align}

Since $\bW$ is Hermitian, it can be diagonalized as $\bW = \bU \bLambda \bU^*$, where $\bU \in \C^{m \times m}$ is unitary and $\bLambda$ is a diagonal matrix with the eigenvalues of $\bW$. 
Hence,
\begin{align}\label{eq:MM_Hermitian_Decomp}
\begin{split}
    \A_{n,m} = \U_{n,m}
    \begin{bmatrix}
        \bI + \bLambda^2 & \bLambda & \bzero & \cdots & \bzero \\
        \bLambda & \bI + \bLambda^2 & \bLambda & \ddots & \vdots \\
        \bzero & \bLambda & \ddots & \ddots & \bzero \\
        \vdots & \ddots & \ddots & \ddots & \bLambda \\
        \bzero & \cdots & \bzero & \bLambda & \bI + \bLambda^2
    \end{bmatrix}
    \U_{n,m}^*.
\end{split}
\end{align}
where $\U_{n,m} = \diag(\bU, \cdots, \bU)$, and $\T_{n,m}$ is the central tri-diagonal block matrix so that $\A_{n,m} = \U_{n,m} \T_{n,m} \U_{n,m}^*$. 
Since $\U_{n,m}$ is a block diagonal matrix composed of unitary matrices, it is also unitary.
Hence, $\T_{n,m}$ is similar to $\A_{n,m}$ and shares the same eigenvalues, so it suffices to examine $\T_{n,m}$ to determine the properties of $\A_{n,m}$.

We can further simplify $\T_{n,m}$, which is a block tri-diagonal matrix, the non-zero blocks of which are diagonal matrices, by transforming it into a block diagonal matrix with Toeplitz tri-diagonal blocks. 
To do so, we use a lemma adapted from ~\cite{Golub_2005}.

\begin{lemma}\label{lemma:Block_Mat_Rearrangement}
    Let $\T_{n,m}$ be a block tri-diagonal matrix:
    \begin{align*}
    \T_{n,m} = \begin{bmatrix}
        \bA_1 & \bB_1 & \cdots & \bzero \\
        \bC_1 & \bA_2 & \ddots & \vdots \\
        \vdots & \ddots & \ddots & \bB_{n-1} \\
        \bzero & \cdots & \bC_{n-1} & \bA_n
    \end{bmatrix} \quad 
    \begin{cases}
        \bA_j = \diag(a_{j1},...,a_{jm}) \quad j \in \{1,...,n\} \\
        \bB_j = \diag(b_{j1},...,b_{jm}) \quad j \in \{1,...,n-1\} \\
        \bC_j = \diag(c_{j1},...,c_{jm}) \quad j \in \{1,...,n-1\} \\
    \end{cases}
    \end{align*}
    Let $\D_{m,n} = \diag(\bT_1, \bT_2, \cdots, \bT_m)$ be a block diagonal matrix the tri-diagonal blocks of which are:
    \begin{align*}
    \bT_k = \begin{bmatrix}
        a_{1,k} & b_{1,k} & 0 & \cdots & 0 \\
        c_{1,k} & a_{2,k} & \ddots & \ddots & \vdots \\
        0 & \ddots & \ddots & \ddots & 0 \\
        \vdots & \ddots & \ddots & \ddots & b_{n-1,k} \\
        0 & \cdots & 0 & c_{n-1,k} & a_{n,k}
    \end{bmatrix}
    \quad \mbox{for } k \in \{1,..., m\}
    \end{align*}
    Then $\D_{m,n} = \mathbb{P}_{m,n} \T_{n,m} \mathbb{P}_{m,n}^\top$ for some unitary permutation $\mathbb{P}_{m,n}$, where the subscript denotes a re-framing of $n$ blocks from size $m$ to $m$ blocks of size $n$.
\end{lemma}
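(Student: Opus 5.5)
The plan is to write down the permutation explicitly and check entrywise that conjugating $\T_{n,m}$ by it yields $\D_{m,n}$. Index the rows and columns of $\T_{n,m}$ by pairs $(j,k)$, with block index $j \in \{1,\dots,n\}$ and position $k \in \{1,\dots,m\}$ inside the block, so that the global index is $(j-1)m + k$. Because every block $\bA_j,\bB_j,\bC_j$ is diagonal, the only nonzero entries of $\T_{n,m}$ are
\[
(\T_{n,m})_{(j,k),(j,k)} = a_{j,k}, \qquad (\T_{n,m})_{(j,k),(j+1,k)} = b_{j,k}, \qquad (\T_{n,m})_{(j+1,k),(j,k)} = c_{j,k}.
\]
In particular, an index $(j,k)$ only couples to indices $(j',k)$ with the \emph{same} $k$.

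Define the bijection $\pi$ on $\{1,\dots,mn\}$ by $\pi\big((j-1)m + k\big) = (k-1)n + j$. This is the perfect shuffle (stride permutation) that reorders indices so that $k$ becomes the outer, block index and $j$ the inner one. Let $\mathbb{P}_{m,n}$ be the associated permutation matrix, $(\mathbb{P}_{m,n})_{\pi(r),r} = 1$ and zero otherwise. Every permutation matrix is real and orthogonal, so $\mathbb{P}_{m,n}^\top = \mathbb{P}_{m,n}^{-1}$ and $\mathbb{P}_{m,n}$ is unitary. Moreover,
\[
(\mathbb{P}_{m,n}\T_{n,m}\mathbb{P}_{m,n}^\top)_{\pi(r),\pi(s)} = (\T_{n,m})_{r,s}.
\]

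Now read off the entries of $\mathbb{P}_{m,n}\T_{n,m}\mathbb{P}_{m,n}^\top$ in the new $(k,j)$ indexing, i.e.\ global index $(k-1)n+j$:
\begin{itemize}
\item The entry at $\big((k-1)n+j,\,(k-1)n+j\big)$ is $a_{j,k}$.
\item The entry at $\big((k-1)n+j,\,(k-1)n+j+1\big)$ is $b_{j,k}$.
\item The entry at $\big((k-1)n+j+1,\,(k-1)n+j\big)$ is $c_{j,k}$.
\end{itemize}
All other entries vanish. Since couplings only occur between equal $k$, no entry connects two different new blocks, so the result is block diagonal with $m$ blocks of size $n$. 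The $k$th block is the tridiagonal matrix with diagonal $(a_{1,k},\dots,a_{n,k})$, superdiagonal $(b_{1,k},\dots,b_{n-1,k})$ and subdiagonal $(c_{1,k},\dots,c_{n-1,k})$, which is exactly $\bT_k$. Hence $\mathbb{P}_{m,n}\T_{n,m}\mathbb{P}_{m,n}^\top = \D_{m,n}$.

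An equivalent, more compact route is to write $\T_{n,m} = \sum_{k=1}^m \bT_k \otimes \mathbf{e}_k\mathbf{e}_k^\top$. Here the $n\times n$ factor acts on the block index and the $m\times m$ factor on the position inside the block; entrywise this expresses $\T_{n,m}$ as an $n \times n$ block matrix with diagonal blocks. One then uses the commutation matrix identity $\mathbb{P}(\bX\otimes\bY)\mathbb{P}^\top = \bY\otimes\bX$ to obtain $\sum_k \mathbf{e}_k\mathbf{e}_k^\top\otimes\bT_k = \diag(\bT_1,\dots,\bT_m)$.

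The only real obstacle is bookkeeping: keeping the index convention consistent, in particular the order $(j,k)$ versus $(k,j)$ and the fact that the boundary indices run to $n-1$ for $b$ and $c$. The argument itself is mechanical once $\pi$ is fixed.
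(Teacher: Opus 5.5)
Your proof is correct and uses the same idea as the paper: one permutation, applied simultaneously to rows and columns, that gathers the $k$th diagonal entry of every block into the $k$th new block, which gives block-diagonality because indices with different $k$ never couple. Your stride permutation $\pi\big((j-1)m+k\big)=(k-1)n+j$ reduces for $k=1$ to the paper's formula $p_{j,(j-1)m+1}=1$, $j\in\{1,\dots,n\}$, and it also specifies the remaining rows and checks the entries explicitly, both of which the paper leaves implicit.
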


\begin{proof}
    To transform $\T_{n,m}$ into $\D_{m,n}$, we want to group the first elements of each of the $n$ size $m \times m$ blocks into an $n \times n$ tri-diagonal matrix, the second elements of each set of matrices as another $n \times n$ block, and so on. 
    Since we will permute rows and columns in the same way, the permutation matrix $\mathbb{P}_{m,n}$ we construct will diagonalize $\T_{n,m}$.
    By construction, the $\mathbb{P}_{m,n}$ that accomplishes this has entries:
    \begin{align}
         p_{jk} = 
         \begin{cases}
             1 \qquad \mbox{for} \quad k = (j-1)m + 1, j \in \{1,..., n\} \\
             0 \qquad \mbox{for} \quad k \neq (j-1)m + 1, j \in \{1,..., n\} \\
         \end{cases} 
    \end{align}
    Thus, $\D_{m,n} = \mathbb{P}_{m,n} \T_{n,m} \mathbb{P}_{m,n}^\top$.
\end{proof}

Applying Lemma \ref{lemma:Block_Mat_Rearrangement} to $\T_{n,m}$ from \eqref{eq:MM_Hermitian_Decomp} shows that 
\begin{align}
    \A_{n,m} = \U_{n,m} \mathbb{P}_{m,n} \D_{m,n} \mathbb{P}_{m,n}^* \U_{n,m}^*,
\end{align} 
with $\D_{m,n} = \diag(\bT_1, \bT_2, \cdots, \bT_m)$ and each tri-diagonal Toeplitz block $\bT_k$ being:
\begin{align} \label{eq:T_rearranged}
    \bT_k = \begin{bmatrix}
        1 + \lambda_k^2 & \lambda_k & 0 & \cdots & 0 \\
        \lambda_k & 1 + \lambda_k^2 & \ddots & \ddots & \vdots \\
        0 & \ddots & \ddots & \ddots & 0 \\
        \vdots & \ddots & \ddots & \ddots & \lambda_k \\
        0 & \cdots & 0 & \lambda_k & 1 + \lambda_k^2
    \end{bmatrix}
    \quad \mbox{for } k \in \{1,..., m\}
\end{align}

\begin{remark}\label{rmk:Herm_Mat_Factor}
    We can fully diagonalize $\A_{n,m}$, since each tri-diagonal sub-block, as a Hermitian, Toeplitz, tri-diagonal matrix is diagonalizable. 
    Let $\V_{m,n}$ be the diagonal block matrix that diagonalizes $\D_{m,n}$.
    Then,
    \begin{align}
        \A_{n,m} = \U_{n,m} \mathbb{P}_{m,n} \V_{m,n} \bLambda_{nm} \V_{m,n}^* \mathbb{P}_{m,n}^* \U_{n,m}^*
    \end{align}
    where $\bLambda_{nm}$ is now a diagonal matrix, rather than a block diagonal matrix, the entries of which are the eigenvalues of $\A_{n,m}$. 
    Consequently, 
    \begin{align}
        \A_{n,m}^{-1} = \U_{n,m} \mathbb{P}_{m,n} \V_{m,n} \bLambda_{nm}^{-1} \V_{m,n}^* \mathbb{P}_{m,n}^* \U_{n,m}^*,
    \end{align} 
    which enables a relatively fast computation of $\A_{n,m}^{-1}$ and of $\M_{n,m}^\dagger = \M_{n,m}^*\A_{n,m}^{-1}$.
\end{remark}

\subsubsection{Singular Values and Condition Number}

Since the singular values and therefore the condition number of a matrix remain invariant under unitary transformations, we can analyze the block diagonal matrix $\D_{m,n}$ associated with \eqref{eq:T_rearranged}.

\begin{proposition}\label{prop:Herm_Mat_Singular_Values}
    If $\bW$ has eigenvalues $\{\lambda_k\}_{k = 1}^m$, the eigenvalues of $\A_{n,m}$ are:
    \begin{align*}
        \lambda_{jk}(\A_{n,m}) = \lambda_k^2 + 2 \lambda_k \cos\left(\frac{j \pi}{n+1}\right) +  1 \quad \mbox{ for } j \in \{1,..., n\} \mbox{ and } k \in \{1, ... m\}.
    \end{align*}
\end{proposition}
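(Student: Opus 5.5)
The plan is to reduce the computation to the scalar Toeplitz case, using the factorization already derived above. By \eqref{eq:MM_Hermitian_Decomp} and Lemma \ref{lemma:Block_Mat_Rearrangement}, we have
\begin{align*}
\A_{n,m} = \U_{n,m} \mathbb{P}_{m,n} \D_{m,n} \mathbb{P}_{m,n}^* \U_{n,m}^*.
\end{align*}
Both $\U_{n,m}$ and $\mathbb{P}_{m,n}$ are unitary, so $\A_{n,m}$ is unitarily similar to $\D_{m,n} = \diag(\bT_1,\dots,\bT_m)$. The spectrum of $\A_{n,m}$, counted with multiplicity, is therefore the union of the spectra of the blocks $\bT_k$ in \eqref{eq:T_rearranged}. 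This holds because the characteristic polynomial of a block diagonal matrix is the product of the characteristic polynomials of its blocks.

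Next I would apply \eqref{eq:Toeplitz_Tridiag_Mat_Eigs} to each $\bT_k$ with $a = 1+\lambda_k^2$ and $b = c = \lambda_k$. Since $\bW$ is Hermitian, $\lambda_k \in \R$. This gives
\begin{align*}
\lambda_j(\bT_k) = 1 + \lambda_k^2 + 2\sqrt{\lambda_k^2}\cos\left(\frac{j\pi}{n+1}\right), \quad j \in \{1,\dots,n\}.
\end{align*}

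The one point needing care is the branch of $\sqrt{bc} = \sqrt{\lambda_k^2} = |\lambda_k|$, because the statement is written with $\lambda_k$ rather than $|\lambda_k|$. I would handle this with the symmetry $j \mapsto n+1-j$, which sends $\cos\left(\frac{j\pi}{n+1}\right)$ to $-\cos\left(\frac{j\pi}{n+1}\right)$ and permutes $\{1,\dots,n\}$. Hence the multiset $\{1+\lambda_k^2 + 2|\lambda_k|\cos(\frac{j\pi}{n+1})\}_{j=1}^n$ equals $\{1+\lambda_k^2 + 2\lambda_k\cos(\frac{j\pi}{n+1})\}_{j=1}^n$ whatever the sign of $\lambda_k$. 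Equivalently, $\sqrt{bc}$ may be taken as either root in \eqref{eq:Toeplitz_Tridiag_Mat_Eigs}. This yields the claimed formula for $\lambda_{jk}(\A_{n,m})$ after relabeling $j$.

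Finally, I would note that these eigenvalues are automatically nonnegative, since
\begin{align*}
\lambda_k^2 + 2\lambda_k\cos\theta + 1 = (\lambda_k+\cos\theta)^2 + \sin^2\theta \ge 0.
\end{align*}
This is consistent with $\A_{n,m} = \M_{n,m}\M_{n,m}^*$ being positive semidefinite, and as in Proposition \ref{prop:Scalar_Mat_Singular_Values} it gives $\sigma_{jk}(\M_{n,m}) = \sqrt{\lambda_{jk}(\A_{n,m})}$. The only step I see as an obstacle is the sign/branch bookkeeping above; everything else is direct substitution into results already stated.
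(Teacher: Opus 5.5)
Your proposal is correct and follows essentially the same route as the paper: unitary similarity of $\A_{n,m}$ to $\D_{m,n} = \diag(\bT_1,\dots,\bT_m)$, then the Toeplitz tridiagonal eigenvalue formula applied to each block with $a = 1+\lambda_k^2$ and $b = c = \lambda_k$. Your added observation, that the symmetry $j \mapsto n+1-j$ makes the branch of $\sqrt{\lambda_k^2}$ irrelevant (so $\lambda_k$ rather than $|\lambda_k|$ may appear in the formula), fills in a detail the paper leaves implicit.
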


\begin{proof}
    The eigenvalues of $\A_{n,m}$ match those of $\D_{m,n}$, which is the set of all eigenvalues of each block $\bT_k$, $k \in\{1,...,m\}$. 
    Since each $\bT_k$ is a Toeplitz, tri-diagonal matrix, applying \eqref{eq:Toeplitz_Tridiag_Mat_Eigs} with $a = 1 + \lambda_k^2$, $b = \lambda_k$ and $c = \lambda_k$ gives the result.
\end{proof}

Having found $\lambda(\A_{n,m})$ explicitly, we next bound the condition number of $\M_{n,m}$.

\begin{theorem} \label{thm:Block_Mat_Herm_Cond}
    Let $\M_{n,m}$ be as in \eqref{eq:M_hermitian}. 
    Then,
    \begin{align}
        \kappa(\M_{n,m}) \le \begin{cases}
            \frac{\sigma_{\max}(\bW) + 1}{\min_{j \in \{1,...,m\}} |\sigma_j(\bW) - 1|} \quad \mbox{ for } \quad \sigma_j(\bW) \neq 1 \mbox{ for } j \in  \{1,...,m\} \\
            \frac{(n+1)}{\pi} (\sigma_{\max}(\bW) + 1) \quad \mbox{ for } \quad \sigma_j(\bW) = 1 \mbox{ for some } j \in \{1,...,m\}
        \end{cases}
    \end{align}
\end{theorem}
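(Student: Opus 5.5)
The plan is to read off the singular values of $\M_{n,m}$ from Proposition \ref{prop:Herm_Mat_Singular_Values}, bound the largest and smallest separately, and divide. Since $\bW$ is Hermitian, its singular values are $\sigma_k(\bW) = |\lambda_k|$. Because $\cos\left(\frac{j\pi}{n+1}\right)$ takes values symmetric about zero as $j$ ranges over $\{1,\dots,n\}$, we have
\begin{align*}
\{\lambda_k^2 + 2\lambda_k \cos(\tfrac{j\pi}{n+1}) + 1\}_{j=1}^n = \{|\lambda_k|^2 + 2|\lambda_k| \cos(\tfrac{j\pi}{n+1}) + 1\}_{j=1}^n .
\end{align*}
Each block $\bT_k$ in \eqref{eq:T_rearranged} is therefore exactly the scalar matrix $\bA_n$ of \eqref{eq:MM_Scalar} with $|\omega| = \sigma_k(\bW)$. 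The singular values of $\M_{n,m}$ are then the union over $k$ of the scalar singular values from Proposition \ref{prop:Scalar_Mat_Singular_Values}:
\begin{align*}
\sigma_{jk}(\M_{n,m}) = \sqrt{\sigma_k(\bW)^2 + 2\sigma_k(\bW)\cos(\tfrac{j\pi}{n+1}) + 1}.
\end{align*}

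\textbf{Upper bound on $\sigma_{\max}(\M_{n,m})$.} Using $\cos \le 1$, every $\sigma_{jk}(\M_{n,m}) \le \sigma_k(\bW) + 1 \le \sigma_{\max}(\bW) + 1$. This holds in both cases of the statement.

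\textbf{Lower bound on $\sigma_{\min}(\M_{n,m})$, first case.} Suppose no $\sigma_k(\bW)$ equals $1$. Using $\cos \ge -1$ gives $\sigma_{jk}(\M_{n,m}) \ge |\sigma_k(\bW) - 1|$, which is exactly the Gerschgorin bound of Proposition \ref{prop:Scalar_Mat_Cond} applied blockwise. Minimizing over $k$ gives $\sigma_{\min}(\M_{n,m}) \ge \min_k |\sigma_k(\bW) - 1| > 0$, and dividing the upper bound by this yields the first estimate.

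\textbf{Lower bound, second case.} Suppose some $\sigma_k(\bW) = 1$; this is the main obstacle. For blocks with $\sigma_k(\bW) \ne 1$, the smallest singular value is bounded below by the function
\begin{align*}
s(x) = \sqrt{x^2 - 2x\cos(\tfrac{\pi}{n+1}) + 1}, \qquad x = \sigma_k(\bW),
\end{align*}
since $\cos(\frac{n\pi}{n+1}) = -\cos(\frac{\pi}{n+1})$. Writing $c = \cos(\frac{\pi}{n+1})$, the quadratic $x^2 - 2cx + 1$ is minimized over all real $x$ at $x = c$. Its minimum is $1 - c^2$, so every block, including those with $\sigma_k(\bW) = 1$, satisfies
\begin{align*}
\sigma_{jk}(\M_{n,m}) \ge \sin\left(\frac{\pi}{n+1}\right).
\end{align*}
For the unit block specifically, the bound is sharper: $2\sin\left(\frac{\pi}{2(n+1)}\right)$, as computed in the proof of Proposition \ref{prop:Scalar_Mat_Cond}.

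To finish, I would lower-bound the sine. Jordan's inequality gives $\sin x \ge \frac{2}{\pi}x$ on $[0, \frac{\pi}{2}]$. I would use whichever sine bound produces the stated constant $\frac{n+1}{\pi}$; this is the step where I expect to juggle factors of $2$. In particular, I need to check that blocks with $\sigma_k(\bW)$ near but not equal to $1$ do not undercut the unit block. If they do, the honest uniform bound is $\sigma_{\min} \ge \sin\left(\frac{\pi}{n+1}\right) \ge \frac{2}{n+1}$. That gives $\kappa(\M_{n,m}) \le \frac{n+1}{2}(\sigma_{\max}(\bW) + 1)$, which is stronger than the claimed $\frac{n+1}{\pi}(\sigma_{\max}(\bW)+1)$ whenever $\frac{1}{2} \le \frac{1}{\pi}$ fails. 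So I would instead present the comparison via the unit-block value $2\sin\left(\frac{\pi}{2(n+1)}\right) \ge \frac{2}{n+1}$, together with the assumption that the other $\sigma_k(\bW)$ lie away from $1$. Matching this to the stated constant is the delicate point I would resolve last.
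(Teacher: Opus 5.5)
Your upper bound $\sigma_{\max}(\M_{n,m})\le\sigma_{\max}(\bW)+1$ and your first case are correct. They are the paper's blockwise Gerschgorin bounds $(|\lambda_k|\pm1)^2$, written via the explicit eigenvalues.

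\textbf{The gap.} The gap is the second case, and it cannot be closed, because the stated inequality is false once $m\ge2$. Your own observation is the reason: a block with $\sigma_k(\bW)=\cos\frac{\pi}{n+1}$ has smallest singular value $\sin\frac{\pi}{n+1}<2\sin\frac{\pi}{2(n+1)}$. Take $\bW=\diag(1,\cos\frac{\pi}{n+1})$. Then $\sigma_{\min}(\M_{n,m})=\sin\frac{\pi}{n+1}$ and $\sigma_{\max}(\M_{n,m})=2\cos\frac{\pi}{2(n+1)}$. Hence $\kappa(\M_{n,m})=1/\sin\frac{\pi}{2(n+1)}>\frac{2(n+1)}{\pi}=\frac{n+1}{\pi}(\sigma_{\max}(\bW)+1)$ for every $n$. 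The simplest instance is $n=1$, $\bW=\diag(1,0)$: here $\M_{1,2}\M_{1,2}^*=\diag(2,1)$ and $\kappa=\sqrt2>4/\pi$.

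\textbf{Why the fallback fails.} Your fallback hypothesis is that the other singular values stay away from $1$, so the unit block is the minimizer. This does not rescue the claim. In that situation $\sigma_{\min}=2\sin\frac{\pi}{2(n+1)}<\frac{\pi}{n+1}$, so $1/\sigma_{\min}>\frac{n+1}{\pi}$ strictly. Combining this with $\sigma_{\max}(\M_{n,m})\le\sigma_{\max}(\bW)+1$ overshoots the target no matter which sine inequality you use. The scalar case survives only because there $\sigma_{\max}(\bM_n)=2\cos\frac{\pi}{2(n+1)}$ exactly, which gives $\cot\frac{\pi}{2(n+1)}<\frac{2(n+1)}{\pi}$. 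When another block supplies $\sigma_{\max}$, that slack is gone. For example, $\bW=\diag(1,10)$ with $n=1$ gives $\M_{1,2}\M_{1,2}^*=\diag(2,101)$ and $\kappa=\sqrt{50.5}\approx7.11>22/\pi\approx7.00$; this $\bW$ in fact violates the bound for every $n$.

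Your comparison is also backwards. Since $\frac12>\frac1\pi$, the bound $\frac{n+1}{2}(\sigma_{\max}(\bW)+1)$ is weaker than the claim, not stronger.

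\textbf{What you can prove.} Your argument does establish, with no extra hypothesis, that $\sigma_{\min}(\M_{n,m})\ge\sin\frac{\pi}{n+1}$ for every Hermitian $\bW$. This comes from your minimization of $x^2-2x\cos\frac{\pi}{n+1}+1$ over all real $x$. Hence $\kappa(\M_{n,m})\le\frac{\sigma_{\max}(\bW)+1}{\sin(\pi/(n+1))}\le\frac{n+1}{2}(\sigma_{\max}(\bW)+1)$. The first inequality is asymptotically attained by the family $\diag(1,\cos\frac{\pi}{n+1})$. That is the correct form of the second case.

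\textbf{Comparison with the paper.} The paper's proof of this case asserts $\sigma_{\min}(\M_{n,m})=2\sin\frac{\pi}{2(n+1)}$ and then $1/\sigma_{\min}(\M_{n,m})<\frac{n+1}{\pi}$. The first claim ignores the other blocks, and the second reverses $\sin x<x$. So the step you flagged as delicate is exactly where the published argument fails. You should replace the constant rather than try to match it.
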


\begin{proof}
    Denote by $k \in \{1, ..., n\}$ the index of the blocks of $\A_{n,m}$, and by extension $\T$, and let $j \in \{1,...,m\}$ serve as the index for the row number within each block. 
    The centers and radii of the Gerschgorin discs for $\T$ are:
    \begin{align*}
        & a_{k,jj} = 1 + \lambda_j(\bW)^2 \quad \, \, \, \mbox{for} \quad j \in \{1,..., n\} \\
        & R_{k,j} = 
        \begin{cases}
            |\lambda_j(\bW)| \, \, \, \qquad \mbox{for} \quad j \in \{1,n\} \\
            2|\lambda_j(\bW)| \qquad \mbox{for} \quad j \in \{2,..., n-1\}
        \end{cases}
    \end{align*}
    Since the contents of the blocks in each row are the same, the centers and radii are independent of the block number $k$, except when $k \in \{1,n\}$.
    When seeking upper and lower bounds on the eigenvalues, the cases when $k \in \{1,n\}$ are subsumed by the other cases.
    Thus, the upper bound on each singular value $j \in \{1,...,m\}$ is:
    \begin{align*}
        \sigma_j(\M_{n,m})^2 &\le 1 + \lambda_j(\bW)^2 + 2 |\lambda_j(\bW)| = (|\lambda_j(\bW)| + 1)^2 \\
        &\quad \implies \sigma_j(\M_{n,m}) \le |\lambda_j(\bW)| + 1
    \end{align*}
    Since $\bW$ is Hermitian, $|\lambda_j(\bW)| = \sqrt{\lambda_j(\bW \bW^*)} = \sigma_j(\bW)$. 
    Consequently,
    \begin{align*}
        \sigma_{\max}(\M_{n,m}) \le \sigma_{\max}(\bW) + 1.
    \end{align*}
    Likewise, Theorem \ref{thm:Gerschgorin} gives a lower bound on the eigenvalues of each block of $\M_{n,m}$:
    \begin{align*}
        \sigma_j(\M_{n,m})^2 & \ge 1 + \lambda_j(\bW)^2 - 2 |\lambda_j(\bW)| = (|\lambda_j(\bW)| - 1)^2 \\
        & \quad \implies \sigma_j(\M_{n,m}) \le ||\lambda_j(\bW)| - 1|
    \end{align*}
    and the minimal singular value of $\M_{n,m}$ is bounded below by:
    \begin{align*}
        \sigma_{\min}(\M_{n,m}) \ge \min_{j \in \{1,...,m\}}||\sigma_j(\bW)| - 1|.
    \end{align*}
    This gives a bound on the condition number, provided $\sigma_j(\bW) \neq 1$ for $j \in \{1,...,m\}$:
    \begin{align*}
        \kappa(\M_{n,m}) \le \frac{\sigma_{\max}(\bW) + 1}{\min_{j \in \{1,...,m\}} |\sigma_j(\bW) - 1|}.
    \end{align*}
    When $\sigma_j(\bW) = 1$ for at least one $j \in \{1,...,m\}$, then $\sigma_{\min}(\M_{n,m}) = 2 \left|\sin\left(\frac{\pi}{2(n+1)}\right) \right|$ and thus $\frac{1}{\sigma_{\min}(\M_{n,m})} < \frac{(n+1)}{\pi}$, and 
    \begin{align*}
        \kappa(\M_{n,m}) \le \frac{(n+1)}{\pi} (\sigma_{\max}(\bW) + 1).
    \end{align*}
\end{proof}

The above result suggests that, when $\bW$ is Hermitian, the modes can decouple when analyzing $\M_{n,m}$, so changing one eigenvalue of $\bW$ does not affect the other sets eigenvalues of $\M_{n,m}$. 
Thus, delay matrices $\M_{n,m}$ with Hermitian $\bW$ are robust to perturbation of the eigenvalues of $\bW$. 
Theorem \ref{thm:Block_Mat_Herm_Cond}, similar to the scalar case, shows the conditioning of $\M_{n,m}$ is independent of $n$ but also of $m$, the weight matrix size. 
It also shows that $\M_{n,m}$ becomes highly unstable when $\bW$ has eigenvalues close to magnitude $1$, as the condition number comes to depend on the lags $n$. 
This phenomenon of avoiding eigenvalues of magnitude $1$ may relate to avoiding aliasing.

\begin{remark}
Theorem \ref{thm:Block_Mat_Herm_Cond} gives rise to the following insights:
    \begin{itemize}
        \item $\sigma_j(\bW) = 0$ is a permissible eigenvalue and will almost never affect the condition number $\kappa(\M_{n,m})$. 
        Thus, when using a linear RNN, adding additional dimensions to $\bh$ and thus to $\bW$ will not impact the conditioning of the embedding, as the additional states, if left ``un-used" will have $\sigma_j(\bW) = 0$.
        \item The condition number bound is effectively independent of the dimension of the latent state space $m$ as it is chiefly dependent on the maximal singular value of $\bW$ and the singular value of $\bW$ closest to $1$.
        \item $\sigma_{\max}(\bM_n), \sigma_{\min}(\bM_n)$ and $\kappa(\bM_n)$, in the scalar case of Proposition \ref{prop:Scalar_Mat_Cond} are recoverable from Theorem \ref{thm:Block_Mat_Herm_Cond} with $m = 1$. 
    \end{itemize}
\end{remark}

\begin{figure}[h!]
    \centering
    \begin{subfigure}[b]{0.45\textwidth}
        \centering
        \includegraphics[width=\textwidth]{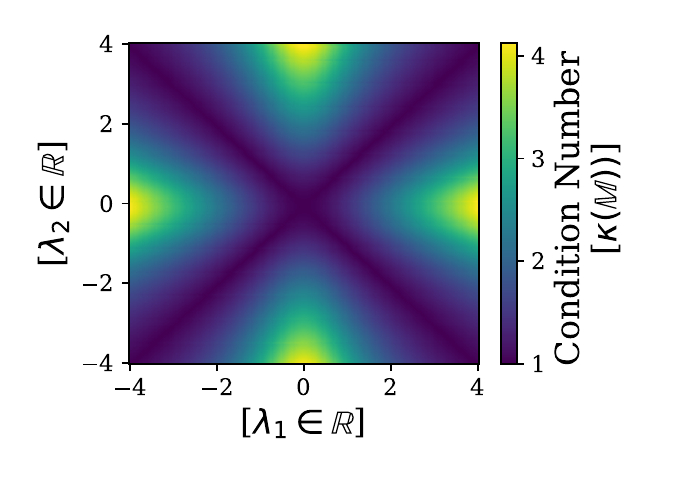}
        \vspace{-9mm}
        \caption{\centering $n = 2$}
        \label{fig:HermMat_2D_Cond_2}
    \end{subfigure}    
    \hfill
    \begin{subfigure}[b]{0.45\textwidth}
        \centering
        \includegraphics[width=\textwidth]{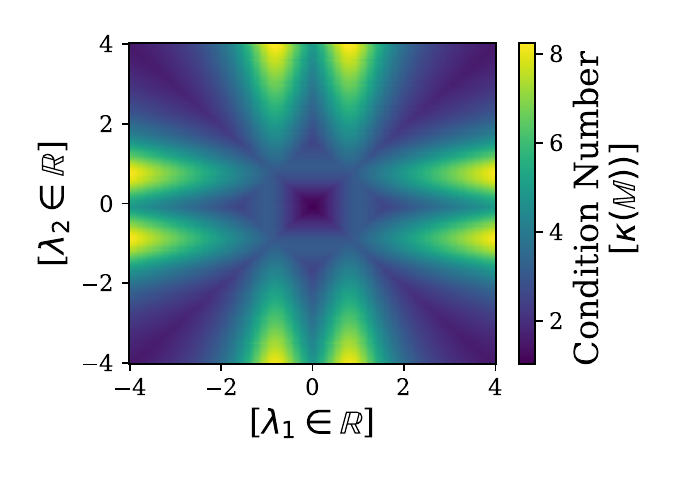}
        \vspace{-9mm}
        \caption{\centering $n = 8$}
        \label{fig:HermMat_2D_Cond_8}
    \end{subfigure}
    \vspace{0.1cm}
    \begin{subfigure}[b]{0.45\textwidth}
        \centering
        \includegraphics[width=\textwidth]{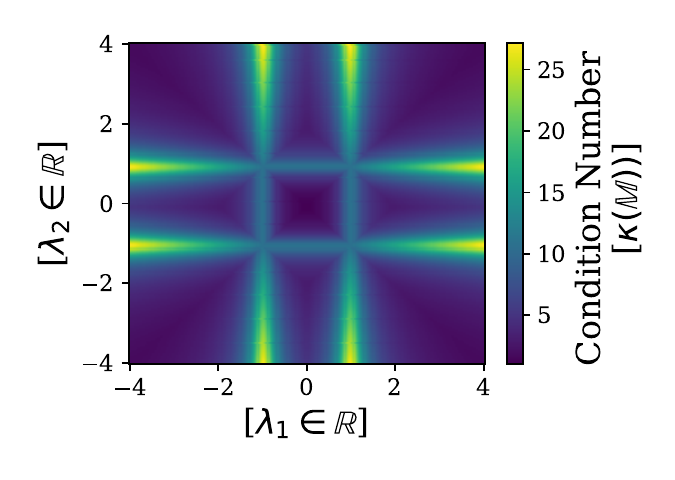}
        \vspace{-9mm}
        \caption{\centering $n = 32$}
        \label{fig:HermMat_2D_Cond_32}
    \end{subfigure}
    \hfill
    \begin{subfigure}[b]{0.45\textwidth}
        \centering
        \includegraphics[width=\textwidth]{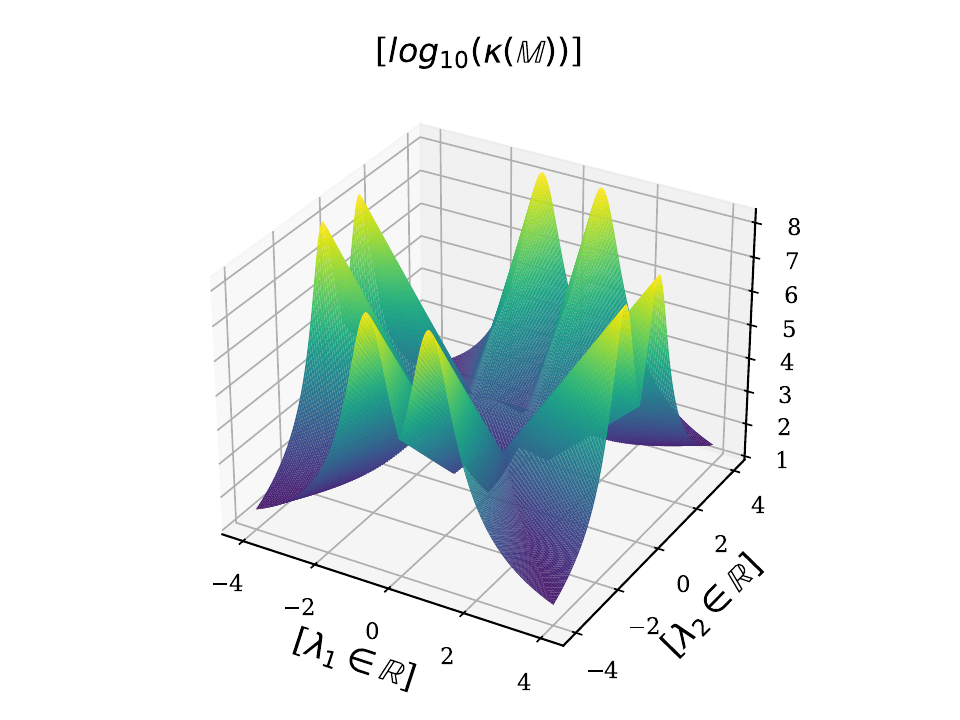}
        \vspace{-9mm}
        \caption{\centering $n = 4$}
        \label{fig:HermMat_2D_Cond_3D}
    \end{subfigure}
    \caption{The log of the condition number of $\M_{n,m}$, $\kappa(\M_{n,m})$, for Hermitian $\bW \in \C^{2 \times 2}$ is shown for varying choices of $\lambda_1(\bW)$ and $\lambda_2(\bW)$ in cases where the number of lags is (a) $n = 2$, (b) $n = 8$, and (c) $n = 32$.
    $\M_{n,m}$ is near-singular when $|\lambda_1(\bW)|$ or $|\lambda_2(\bW)|$ are close to $1$, and its ill-conditioning increases as $n$ increases.
    Notably, $\kappa(\M_{n,m})$ is small within the unit square and away from eigenvalues of magnitude $1$.
    (d) Noting that the structure of the plot in each direction closely resembles the scalar case \ref{fig:RecMat_Scalar_Conds}, it follows that the $m$-dimensional case for $\bW$ is the graph crossed with itself $m$ times.
    }
    \label{fig:HermMat_2D_Cond}
\end{figure}

\subsubsection{Determinant}

For another indicator of the degree to which $\M_{n,m}$ with Hermitian $\bW$ distorts the image of the unit ball, we find the determinant of $\A_{n,m}$.

\begin{theorem} \label{thm:Block_Mat_Herm_Det}
    Let $\M_{n,m}$ be as in \eqref{eq:M_hermitian}. 
    Then, letting $r$ denote the number of $\sigma_j(\bW)$ such that $\sigma_j(\bW) = 1$:
    \begin{align}
        S(\M_{n,m}):= \sqrt{|\det(\A_{n,m})|} & = (n+1)^{r/2} \cdot\prod_{j = 1}^{m-r} \left(\sum_{k = 0}^{n} \sigma_j(\bW)^{2k}\right)^{1/2}
    \end{align}
    and $S(\M_{n,m}) \ge 1$ for any set of weight matrix singular values $\{\sigma_j(\bW)\}_{j = 1}^{m}$.
\end{theorem}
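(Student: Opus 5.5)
The plan is to reduce the determinant of $\A_{n,m}$ to a product of scalar determinants, using the unitary block diagonalization established above. By \eqref{eq:MM_Hermitian_Decomp} and Lemma \ref{lemma:Block_Mat_Rearrangement},
\[
\A_{n,m} = \U_{n,m} \mathbb{P}_{m,n} \D_{m,n} \mathbb{P}_{m,n}^* \U_{n,m}^*,
\]
where $\U_{n,m}$ and $\mathbb{P}_{m,n}$ are unitary. Determinants are invariant under unitary similarity, so
\[
\det(\A_{n,m}) = \det(\D_{m,n}) = \prod_{j=1}^m \det(\bT_j),
\]
with each $\bT_j$ the Toeplitz tri-diagonal block \eqref{eq:T_rearranged}. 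That block has $a = 1+\lambda_j^2$ and $b = c = \lambda_j$, where $\lambda_j \in \R$ because $\bW$ is Hermitian.

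Next, evaluate each $\det(\bT_j)$ exactly as in the proof of Proposition \ref{prop:Scalar_Mat_Det_Bds}, since $\bT_j$ is precisely the scalar matrix $\bA_n$ with $\omega = \lambda_j$.

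\textbf{Case $|\lambda_j| \neq 1$.} The discriminant is $a^2 - 4bc = (1-\lambda_j^2)^2 \neq 0$. Formula \eqref{eq:Tridiag_Toeplitz_Det} then gives
\[
\det(\bT_j) = \frac{1-\lambda_j^{2n+2}}{1-\lambda_j^2} = \sum_{k=0}^n \lambda_j^{2k}.
\]

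\textbf{Case $|\lambda_j| = 1$.} The discriminant vanishes, and formula \eqref{eq:Tridiag_Toeplitz_Det_0} gives
\[
\det(\bT_j) = (n+1)\left(\tfrac{1+1}{2}\right)^n = n+1.
\]
The only sign subtlety is $\lambda_j = -1$. There $a = 2$ and $bc = 1$, so the formula applies unchanged. Alternatively, Proposition \ref{prop:Herm_Mat_Singular_Values} lists the eigenvalues, and the product is symmetric under $\lambda_j \mapsto -\lambda_j$ because $\cos(j\pi/(n+1))$ is symmetric under $j \mapsto n+1-j$.

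Then convert to singular values. For Hermitian $\bW$ we have $\sigma_j(\bW) = |\lambda_j(\bW)|$, so $\lambda_j^{2k} = \sigma_j(\bW)^{2k}$, and $|\lambda_j| = 1$ exactly when $\sigma_j(\bW) = 1$. Reindex so the $r$ singular values equal to $1$ come last. The product is then $(n+1)^r \prod_{j=1}^{m-r} \sum_{k=0}^n \sigma_j(\bW)^{2k}$. Every factor is positive, so $|\det(\A_{n,m})| = \det(\A_{n,m})$, and taking the square root gives the stated formula for $S(\M_{n,m})$.

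For the lower bound $S(\M_{n,m}) \ge 1$, note that each sum $\sum_{k=0}^n \sigma_j(\bW)^{2k}$ contains the $k=0$ term $1$ and otherwise only nonnegative terms, so it is at least $1$; likewise $n+1 \ge 1$. The product of factors each at least $1$ is at least $1$.

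The main obstacle I anticipate is bookkeeping rather than analysis. The points to check are: that the unitary permutation and block-unitary conjugation genuinely preserve the determinant; that the degenerate case $\lambda_j = -1$ is handled by the same formula; and that the relabelling of singular values into the $r$ unit ones and the $m-r$ others is stated cleanly. Everything else is a direct transcription of the scalar computation.
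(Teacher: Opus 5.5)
Your proposal is correct and follows essentially the same route as the paper: unitary block-diagonalization of $\A_{n,m}$ via Lemma~\ref{lemma:Block_Mat_Rearrangement}, factoring $\det(\A_{n,m})$ into the scalar Toeplitz determinants $\det(\bT_j)=\sum_{k=0}^{n}\lambda_j^{2k}$ (equal to $n+1$ when $|\lambda_j|=1$), and then using $\sigma_j(\bW)=|\lambda_j(\bW)|$ for Hermitian $\bW$. You also make explicit what the paper leaves implicit or slightly misstates: similarity by a unitary preserves the determinant, the inner sum runs to $n$ rather than $m$, and $S(\M_{n,m})\ge 1$ follows because each factor contains the $k=0$ term $1$.
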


\begin{proof}
    By Lemma \ref{lemma:Block_Mat_Rearrangement}, $\A_{n,m} = \U_{n,m} \mathbb{P}_{m,n} \D_{m,n} \mathbb{P}_{m,n}^* \U_{n,m}^*$, where $\D_{m,n}$ is the block diagonal matrix the blocks of which are tri-diagonal matrices $\bT_k$ as in \eqref{eq:T_rearranged}, and $\U_{n,m}$ and $\mathbb{P}$ are unitary.
    Because the determinant of a product of matrices equals the product of determinants, and the determinant of unitary matrices such as $\U_{n,m}$ and $\mathbb{P}_{m,n}$ is one, and the determinant of a block diagonal matrix is the product of the determinants of its diagonal blocks,
    \begin{align}
        \det(\A_{n,m}) &= \det(\U_{n,m} \mathbb{P}_{m,n} \D_{m,n} \mathbb{P}_{m,n} \U_{n,m}^*) = \det(\D_{m,n}) = \prod_{j = 1}^m \det(\bT_j)
    \end{align}
    From the proof of the scalar case in Proposition \ref{prop:Scalar_Mat_Det_Bds}:
    \begin{align*}
        \det(\bT_j) = \sum_{k = 0}^m \sigma_j(\bW)^{2k}.
    \end{align*}
    Letting $r$ denote the number of eigenvalues of the form $\lambda \in \{\lambda | |\lambda(\bW)| = 1 \}$,
    \begin{align*}
        \det(\A_{n,m}) = (n+1)^r \cdot \prod_{j = 1}^{m-r} \left(\sum_{k = 0}^n |\lambda_j(\bW)|^{2k}\right).
    \end{align*}
\end{proof}

Having explicitly computed the determinant for the Hermitian case of the delay matrix, we can now establish bounds on it.

\begin{proposition} \label{prop:Block_Mat_Herm_Det_Bds}
    Let $\M_{n,m}$ be as in \eqref{eq:M_hermitian}. 
    For $\sigma_{\max}(\bW) \le 1$, an upper bound on the determinant is:
    \begin{align}
        S(\M_{n,m}) = \sqrt{|\det(\A_{n,m})|} \le 
        \begin{cases}
            \left(\frac{1}{\sqrt{\min_{j \in \{1,..., m\}}(1 - \sigma_j(\bW)^2)}}\right)^{m} \quad \sigma_{\max}(\bW) < 1 \\
            \left(\sqrt{n+1}\right)^{m} \qquad \qquad \qquad \qquad \sigma_{\max}(\bW) = 1 \\
            n^{m/2} \sigma_{\max}(\bW)^{nm} \qquad \qquad \qquad \sigma_{\max}(\bW) > 1
        \end{cases}
    \end{align}
\end{proposition}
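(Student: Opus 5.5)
The plan is to start from the exact formula of Theorem \ref{thm:Block_Mat_Herm_Det} and bound it factor by factor. Since $\bW$ is Hermitian, $|\lambda_j(\bW)| = \sigma_j(\bW)$, and the proof of Theorem \ref{thm:Block_Mat_Herm_Det} gives
\begin{align*}
S(\M_{n,m})^2 = \det(\A_{n,m}) = \prod_{j=1}^m \det(\bT_j), \qquad \det(\bT_j) = \sum_{k=0}^{n} \sigma_j(\bW)^{2k},
\end{align*}
where we read a factor $\sum_{k=0}^n 1 = n+1$ for each $j$ with $\sigma_j(\bW)=1$. Every factor is positive, so an upper bound on $S(\M_{n,m})$ follows by bounding each geometric sum separately and multiplying the $m$ bounds. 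This is exactly the scalar computation of Proposition \ref{prop:Scalar_Mat_Det_Bds}, applied block by block.

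\textbf{Case $\sigma_{\max}(\bW) < 1$.} For each $j$ we have
\begin{align*}
\sum_{k=0}^n \sigma_j^{2k} = \frac{1 - \sigma_j^{2n+2}}{1 - \sigma_j^2} \le \frac{1}{1 - \sigma_j^2} \le \frac{1}{\min_{i}(1 - \sigma_i(\bW)^2)}.
\end{align*}
Taking the product over $j$ and then the square root gives the first bound. Note that $\min_i(1-\sigma_i^2) = 1 - \sigma_{\max}^2 > 0$ in this regime, so the bound is finite.

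\textbf{Case $\sigma_{\max}(\bW) = 1$.} Every $\sigma_j \le 1$, so each factor satisfies $\sum_{k=0}^n \sigma_j^{2k} \le n+1$, with equality exactly when $\sigma_j = 1$. The product is therefore at most $(n+1)^m$, which gives $S \le (\sqrt{n+1})^m$.

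\textbf{Case $\sigma_{\max}(\bW) > 1$.} Bound each term by $\sigma_j^{2k} \le \max(1,\sigma_{\max})^{2k} = \sigma_{\max}^{2k}$, so that
\begin{align*}
\sum_{k=0}^n \sigma_j^{2k} \le (n+1)\,\sigma_{\max}^{2n}.
\end{align*}
The product then gives $S \le (n+1)^{m/2}\sigma_{\max}^{nm}$. The stated bound has $n^{m/2}$ in place of $(n+1)^{m/2}$, and this is the one step where I expect trouble.

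To try to reach $n^{m/2}$, I would split off the top term: for $\sigma_j > 1$,
\begin{align*}
\sum_{k=0}^n \sigma_j^{2k} \le \sigma_{\max}^{2n} + n\,\sigma_{\max}^{2n-2}.
\end{align*}
This is at most $n\sigma_{\max}^{2n}$ only when $\sigma_{\max}^{2} \ge n/(n-1)$ (for $n\ge 2$). Failing that, the alternative is to use the closed form $\sigma^{2n}/(1-\sigma^{-2})$ from Proposition \ref{prop:Scalar_Mat_Det_Bds}.

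I doubt $n^{m/2}$ holds uniformly. For example, with $m=1$ and $\sigma \to 1^+$, the left side tends to $\sqrt{n+1}$ while $n^{1/2}\sigma^n \to \sqrt n$, so the stated inequality fails. The correct uniform statement is therefore the $(n+1)^{m/2}$ version, and I would present that, flagging the discrepancy with the stated constant.
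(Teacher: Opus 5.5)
Your argument is correct, and your suspicion about the third branch is justified. For the first two bounds the paper takes a slightly different route. It first applies AM--GM to the $m$ positive factors $x_j=\sum_{k=0}^n\sigma_j(\bW)^{2k}$, obtaining $|\det(\A_{n,m})|\le\bigl(\frac{1}{m}\sum_{j=1}^m x_j\bigr)^m$, which it rewrites in terms of $\sum_k\|\bW^k\|_F^2$. Only then does it bound the average: by $n+1$ when $\sigma_{\max}(\bW)=1$, and by $1/\min_j(1-\sigma_j(\bW)^2)$ when $\sigma_{\max}(\bW)<1$. Because the average is in the end bounded by its worst term, the AM--GM step adds nothing to your factor-by-factor estimate for the bounds actually stated. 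Its only byproduct is the intermediate Frobenius-norm form, which is sharper than $(\max_j x_j)^m$ but is never used. Your route is the more economical one.

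For $\sigma_{\max}(\bW)>1$, the paper simply asserts $|\det(\A_{n,m})|\le (n\,\sigma_{\max}(\bW)^{2n})^m$ without justification. That is exactly the step that fails: each geometric sum has $n+1$ terms, so the correct estimate is your $(n+1)^{m/2}\sigma_{\max}(\bW)^{nm}$. Your counterexample with $\sigma\to1^+$ is valid. The case $n=m=1$ is even more decisive: there the claim reads $1+\sigma^2\le\sigma^2$, which is false for every $\sigma>1$.

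Note also that the proposition's own hypothesis $\sigma_{\max}(\bW)\le1$ makes the third branch vacuous as literally stated. So your treatment of the first two cases already proves the statement as written. The third branch, which the paper's proof does attempt, holds only with $n+1$ in place of $n$.
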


\begin{proof}
    Recall that the Arithmetic Mean - Geometric Mean (AM-GM) inequality states that for any list of $m$ nonnegative real numbers $\{x_1,..., x_m\}$, it follows that:
    \begin{align*}
        \left(\prod_{j = 1}^m x_j\right)^{1/m} \le \frac{1}{m} \sum_{j = 1}^m x_i,
    \end{align*}
    with equality when $x_1 = x_2 = \cdots = x_m$. 
    Since $|\det(\A_{n,m})|$ is a product of positive real numbers $x_j = \sum_{k = 0}^{n} \sigma_j(\bW)^{2k}$ for $j \in \{1,..,m\}$, by the AM-GM inequality:
    \begin{align*}
        |\det(\A_{n,m})| \le \left(\frac{1}{m} \sum_{j = 1}^m \sum_{k = 0}^{n} \sigma_j(\bW)^{2k} \right)^m = \left(\sum_{k = 0}^n \|\bW^k\|_F^2 \right)^m
    \end{align*}
    If $\sigma_{\max}(\bW) = 1$: 
    \begin{align*}
        |\det(\A_{n,m})| \le (n+1)^m
    \end{align*}
    If $\sigma_{\max}(\bW) < 1$:
    \begin{align*}
        |\det(\A_{n,m})| &\le \left(\frac{1}{m} \sum_{j = 1}^m \frac{1 - \sigma_j(\bW)^{2(n+1)}}{1 - \sigma_j(\bW)^2} \right)^m \le \left(\frac{1}{m} \sum_{j = 1}^m \frac{1}{1 - \sigma_j(\bW)^2} \right)^m \\
        &\le \left(\frac{1}{\min_{j \in \{1,.., m\}}(1 - \sigma_j(\bW)^2)}\right)^m
    \end{align*}
    If $\sigma_{\max}(\bW) > 1$, then:
    \begin{align*}
        |\det(\A_{n,m})| \le \left(n \sigma_{\max}(\bW)^{2n}\right)^m.
    \end{align*}
\end{proof}

\begin{remark}
    When $m = 1$, the exact determinant result for Hermitian $\bW$ in Theorem \ref{thm:Block_Mat_Herm_Det} and the determinant bounds in Proposition \ref{prop:Block_Mat_Herm_Det_Bds} agree with the scalar case as in Proposition \ref{prop:Scalar_Mat_Det_Bds}. 
    For $m > 1$, the sensitivity compounds according to $m$, the size of the weight matrix $\bW$.
\end{remark}

\begin{figure}[h!]
    \centering
    \begin{subfigure}[b]{0.48\textwidth}
        \centering
        \includegraphics[width=\textwidth]{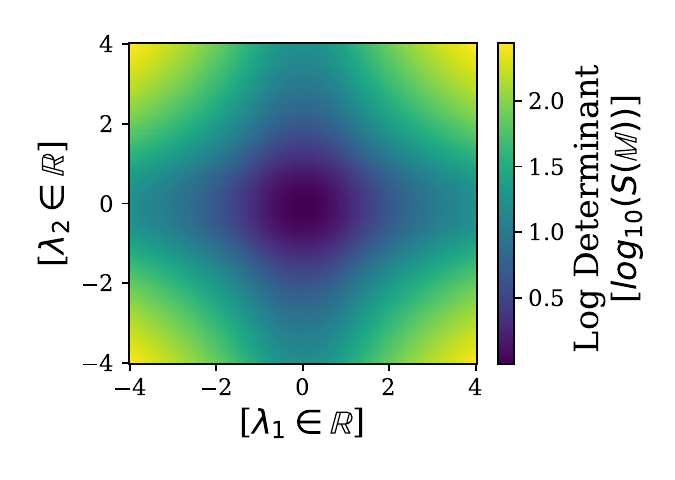}
        \vspace{-9mm}
        \caption{\centering $n = 2$}
        \label{fig:HermMat_2D_Det_2}
    \end{subfigure}    
    \hfill
    \begin{subfigure}[b]{0.48\textwidth}
        \centering
        \includegraphics[width=\textwidth]{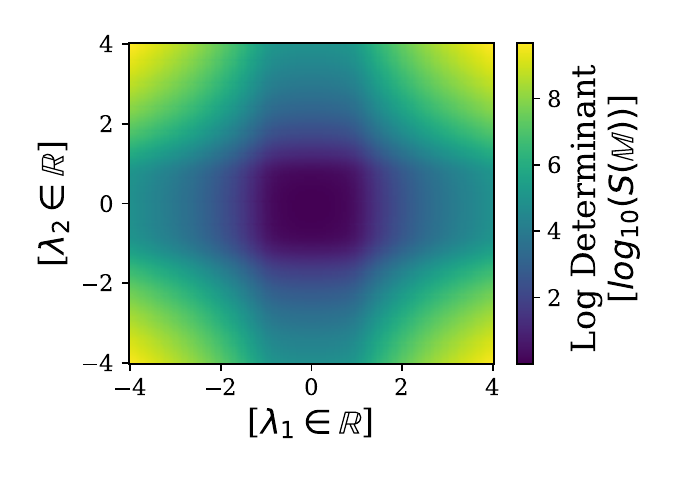}
        \vspace{-9mm}
        \caption{\centering $n = 8$}
        \label{fig:HermMat_2D_Det_8}
    \end{subfigure}
    \vspace{0.1cm}
    \begin{subfigure}[b]{0.48\textwidth}
        \centering
        \includegraphics[width=\textwidth]{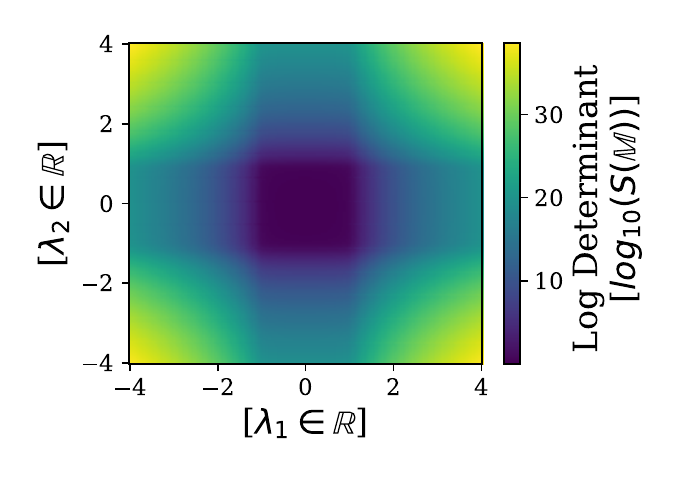}
        \vspace{-9mm}
        \caption{\centering $n = 32$}
        \label{fig:HermMat_2D_Det_32}
    \end{subfigure}
    \hfill
    \begin{subfigure}[b]{0.48\textwidth}
        \centering
        \includegraphics[width=\textwidth]{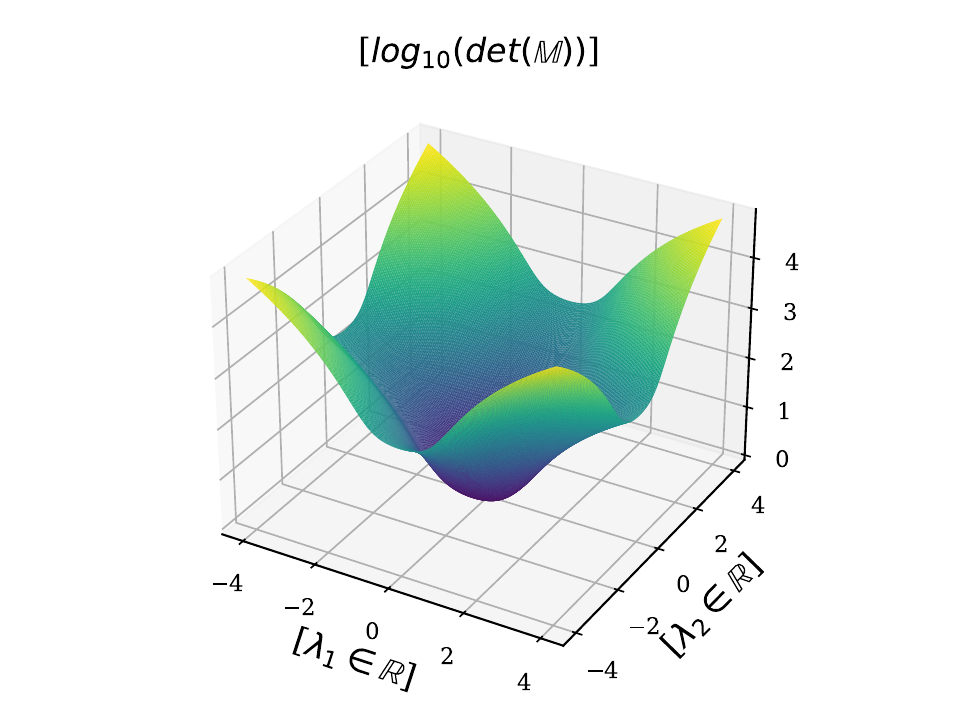}
        \vspace{-9mm}
        \caption{\centering $n = 4$}
        \label{fig:HermMat_2D_Det}
    \end{subfigure}
    \caption{The log of $S(\M_{n,m})$ for Hermitian $\bW \in \C^{2 \times 2}$ is shown for varying $\lambda_1(\bW)$ and $\lambda_2(\bW)$ and for lags (a) $n = 2$, (b) $n = 8$, and (c) $n = 32$.
    The order of magnitude of $S(\M_{n,m})$ scales drastically as $n$ increases, especially when both $|\lambda_1| > 1$ and $|\lambda_2| > 1$, as well as with increasing $n$. 
    When $|\lambda_1| > 1$ and $|\lambda_2| < 1$, or vice versa, $S(\M_{n,m})$ is relatively well-behaved, and the stablest region is when $|\lambda_1| < 1$ and $|\lambda_2| < 1$, the unit square.
    (d) The plot structure in both directions resembles the scalar case \ref{fig:RecMat_Scalar_Dets_Panel}, so, for $m$-dimensional $\bW$, the graph is the scalar case crossed with itself $m$ times.
    }
    \label{fig:HermMat_2D_Cond}
\end{figure}

\begin{remark}
    By the results of Proposition \ref{prop:Block_Mat_Herm_Det_Bds}, $S(\M_{n,m})$ is independent of $n$ provided $|\sigma_{\max}(\bW)| < 1$: $\M_{n,m}$ is most stable when the spectrum of $\bW$ lies within the unit box of dimension $m$.
    In the context of LRNNs, bounding $S(\M_{n,m})$ independent of the number of lags $n$ shows that, although $S(\M_{n,m})$ may increase with additional $m$, there is a finite limit.
    Thus, as $n \to \infty$, this would in theory allow for an infinite look-back at time series without seriously increasing the ill-posedness of the problem. 
\end{remark}

\subsection{Arbitrary Block Case}

Having considered the scalar and block Hermitian cases of the delay matrix $\M_{n,m}$, we can further generalize the results, to a degree, by considering arbitrary weight matrices $\bW$ that need not be diagonalizable. 
Such a generalization would more accurately reflect the behavior of weight matrices of general LRNNs, which often are non-Hermitian and have complex eigenvalues. 
Again,
\begin{align}
    \M_{n,m}: =
    \begin{bmatrix}
        \bI & \bW & \bzero & \cdots & \bzero \\
        \bzero & \bI & \bW & \ddots & \vdots \\
        \vdots & \ddots & \ddots & \ddots & \bzero \\
        \bzero & \cdots & \bzero & \bI & \bW
    \end{bmatrix} \in \C^{m n \times m (n+1)}
    \label{eq:M_general}
\end{align}
and, for convenience of analysis, we consider:
\begin{align}
    \A_{n,m} := \M_{n,m}\M_{n,m}^* = \begin{bmatrix}
        \bI + \bW \bW^* & \bW & \cdots & \bzero \\
        \bW^* & \bI + \bW \bW^* & \ddots & \vdots \\
        \vdots & \ddots & \ddots & \bW \\
        \bzero & \cdots & \bW^* & \bI + \bW \bW^*
    \end{bmatrix} \in \C^{m n \times m n}
    \label{eq:MM_general}
\end{align}
Unlike the scalar and block Hermitian cases that admit explicit results for the singular values of $\M_{n,m}$, the generically non-Hermitian nature of $\bW$ in this generalized setting requires more restrictive assumptions on the spectrum of $\bW$ to arrive at a sufficient condition to prove $\A_{n,m}$ is non-singular.
We first introduce the following results, which will be necessary to prove a sufficient condition for the non-singularity of $\A_{n,m}$.

\begin{definition}\label{def:Block_Irreducibility}
    (Definition 2 ~\cite{Feingold_1962}) The $mn \times mn$ partitioned matrix $\bA = [A_{jk}]$ as in Definition \ref{def:Block_Diag_Dominance} is block irreducible if the $n \times n$ matrix $\bB := (b_{ij} = \|A_{jk}\|)$ for $1 \le j$ and $k \le n$ is irreducible, i.e. the graph of $\bB$ is strongly connected.
\end{definition}

\begin{lemma}\label{lem:Block_Irreducibility}
    (Theorem 1 ~\cite{Feingold_1962}) If the partitioned matrix $\bA$ as assumed in Definition \ref{def:Block_Diag_Dominance} is block irreducible and block diagonally dominant with inequality holding in \eqref{eq:Block_Diag_Dominance} for at least one $j \in \{1,...,n\}$, then $\bA$ is non-singular.
\end{lemma}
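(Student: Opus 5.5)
This lemma is quoted from Feingold--Varga, so the plan is to recover the classical Taussky argument for irreducibly diagonally dominant matrices, carried out block-wise with the operator norm. Throughout, $\bA = [\bA_{jk}]$ has non-singular diagonal blocks, satisfies the weak block dominance \eqref{eq:Block_Diag_Dominance}, and has strict inequality for at least one row $j_0$.

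The argument is by contradiction. Suppose $\bA \bx = \bzero$ for some nonzero $\bx = (\bx_1,\dots,\bx_n)$ with $\bx_j \in \C^m$. Row block $j$ of this equation gives $\bx_j = -\sum_{k\neq j} \bA_{jj}^{-1}\bA_{jk}\bx_k$. Taking norms,
\begin{align*}
\|\bx_j\| \le \sum_{k \ne j} \|\bA_{jj}^{-1}\bA_{jk}\|\,\|\bx_k\|.
\end{align*}
Let $M = \max_k \|\bx_k\| > 0$, and let $J = \{j : \|\bx_j\| = M\}$, which is nonempty.

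First I handle a row $j \in J$. By the displayed inequality and dominance,
\begin{align*}
M \le \sum_{k\ne j} \|\bA_{jj}^{-1}\bA_{jk}\|\,\|\bx_k\| \le M \sum_{k\ne j}\|\bA_{jj}^{-1}\bA_{jk}\| \le M.
\end{align*}
Equality must therefore hold throughout. This forces two things. First, $\sum_{k\ne j}\|\bA_{jj}^{-1}\bA_{jk}\| = 1$, so $j \ne j_0$. Second, $\|\bx_k\| = M$ for every $k$ with $\bA_{jj}^{-1}\bA_{jk}\neq \bzero$. Since $\bA_{jj}$ is invertible, that condition is the same as $\bA_{jk} \ne \bzero$, which is the same as an edge $j\to k$ in the graph of the matrix $(\|\bA_{jk}\|)$ from Definition \ref{def:Block_Irreducibility}. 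So $J$ is closed under out-edges.

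Next I use irreducibility. Because the graph is strongly connected, every index is reachable from any element of $J$, and closure under out-edges then gives $J = \{1,\dots,n\}$. In particular $j_0 \in J$, which contradicts the first consequence of the equality case. Hence the only solution is $\bx = \bzero$, and $\bA$ is non-singular.

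The main point needing care is the equality step. It needs $M>0$ so that I can divide by it. It also needs the terms with $\|\bx_k\|<M$ and nonzero coefficient to make the inequality strict, and that requires the coefficient norm to be positive exactly when the block is nonzero. If the paper's definition of $\bB$ in Definition \ref{def:Block_Irreducibility} is read literally with $\|\bA_{jk}\|$ rather than $\|\bA_{jj}^{-1}\bA_{jk}\|$, the two have the same zero pattern off the diagonal by invertibility of $\bA_{jj}$, so the graphs coincide.
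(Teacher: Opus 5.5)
Your proof is correct and follows the standard route. The paper does not prove this lemma itself; it cites Feingold--Varga, whose argument is this same Taussky-type one: take a block $\bx_j$ of maximal norm, force equality in the dominance chain, and propagate $\|\bx_k\|=M$ along the strongly connected graph of $\bB$ until it reaches the strictly dominant row.

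One point in your favor is that you work directly with the paper's condition $\sum_{k\neq j}\|\bA_{jj}^{-1}\bA_{jk}\|\le 1$. This is weaker than Feingold--Varga's $\|\bA_{jj}^{-1}\|^{-1}\ge\sum_{k\neq j}\|\bA_{jk}\|$, so your argument justifies the lemma in the form stated here, provided the matrix norm is subordinate to the vector norm on the $\bx_j$, as you assume.
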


\begin{fact}\label{fact:Tridiagonal_Irreducible}
    (~\cite{Horn_Johnson_1985}) Tri-diagonal matrices are irreducible provided all of their super and sub-diagonal entries are nonzero.
\end{fact}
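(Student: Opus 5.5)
I would prove this through the graph-theoretic characterization already used in Definition \ref{def:Block_Irreducibility}. An $n \times n$ matrix $\bB = (b_{jk})$ is irreducible exactly when its directed graph $G(\bB)$ is strongly connected. Here $G(\bB)$ has vertex set $\{1,\dots,n\}$ and an arc $j \to k$ whenever $b_{jk} \neq 0$. The equivalent definition says $\bB$ is irreducible unless some permutation brings it to block upper triangular form. For completeness I would recall why the two agree. If there is a nonempty proper subset $S \subset \{1,\dots,n\}$ with $b_{jk} = 0$ for all $j \in S$ and $k \notin S$, then ordering the indices of the complement of $S$ first and $S$ last exhibits the zero off-diagonal block. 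Conversely, if the graph is not strongly connected, take some $j$ that cannot reach some $k$, and let $S$ be the set of vertices reachable from $j$. Then $S$ contains $j$, omits $k$, and no arc leaves it.

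Now let $\bT$ be tri-diagonal with $b_{j,j+1} \neq 0$ and $b_{j+1,j} \neq 0$ for $j \in \{1,\dots,n-1\}$. The superdiagonal supplies the arcs $j \to j+1$ and the subdiagonal supplies the arcs $j+1 \to j$. Given any two vertices $j < k$, there is the path $j \to j+1 \to \cdots \to k$ along superdiagonal arcs and the path $k \to k-1 \to \cdots \to j$ along subdiagonal arcs. Hence every pair of vertices is mutually reachable, $G(\bT)$ is strongly connected, and $\bT$ is irreducible. The diagonal entries play no role. The case $n = 1$ is irreducible by the usual convention, since a $1 \times 1$ matrix admits no nontrivial partition.

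There is no real obstacle here. The only point needing care is to match the notion of irreducibility used in Definition \ref{def:Block_Irreducibility}, namely strong connectivity of the graph, with the permutation definition of \cite{Horn_Johnson_1985}, and this is the short argument in the first paragraph.

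I would add one remark for the intended application to $\A_{n,m}$ in \eqref{eq:MM_general}. The comparison matrix $\bB$ with $b_{jk} = \|\bA_{jk}\|$ is tri-diagonal, with superdiagonal entries $\|\bW\|$ and subdiagonal entries $\|\bW^*\| = \|\bW\|$ in the operator norm. These entries are nonzero precisely when $\bW \neq \bzero$. So $\A_{n,m}$ is block irreducible whenever $\bW \neq \bzero$, which is what is needed to invoke Lemma \ref{lem:Block_Irreducibility}. The case $\bW = \bzero$ is trivial, since then $\A_{n,m} = \bI$.
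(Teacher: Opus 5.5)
Your proof is correct and is the standard argument behind the Horn--Johnson citation (the paper states this fact without proof). The nonzero super- and sub-diagonal entries give the arcs $j \to j+1$ and $j+1 \to j$, so the directed graph is strongly connected, which is exactly irreducibility as phrased in Definition \ref{def:Block_Irreducibility}; your closing remark about $\A_{n,m}$ with $\|\bW^*\|_{\op} = \|\bW\|_{\op} \neq 0$ also matches how the paper uses the fact.

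One small slip is in your equivalence paragraph. As written it only shows that a graph which is not strongly connected gives a permutation to block upper triangular form. For the permutation definition you need the other direction: the trailing index set of such a block form has no outgoing arcs, so its vertices cannot reach the rest. That direction is equally immediate.
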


Using the above results, we establish the following theorem.

\begin{theorem}\label{thm:Block_Mat_Gen_Diag_Dom}
    A sufficient condition for $\A_{n,m}$ \eqref{eq:M_general} to be invertible is: 
    \begin{align}\label{eq:Gen_Mat_Weak_Result}
        \sigma_{\min}(\bW) \le \sigma_{\max}(\bW) \le \frac{1}{2}(1 + \sigma_{\min}(\bW)^2).
    \end{align}
    with $\sigma_{\max}(\bW) > 0$.
    When $\sigma_{\min}(\bW) \le \sigma_{\max}(\bW) \le 1$ and $\sigma_{\max}(\bW) > 0$, we have the following improved bound:

    \begin{align}\label{eq:Gen_Mat_Strong_Result_1}
        \left(\frac{\sigma_{\max}(\bW)^3 - \sigma_{\max}(\bW)^2 + 2\sigma_{\max}(\bW) - 1}{\sigma_{\max}(\bW)^2 - \sigma_{\max}(\bW) + 1}\right)^{1/2} \le \sigma_{\min}(\bW), 
    \end{align}
    and when $1 \le \sigma_{\min}(\bW) \le \sigma_{\max}(\bW)$ and $\sigma_{\max}(\bW) > 0$, we have the improved bound:
    \begin{align}\label{eq:Gen_Mat_Strong_Result_2}
        \sigma_{\max}(\bW) \le \sigma_{\min}(\bW)^2 - \sigma_{\min}(\bW) + 1. 
    \end{align}
\end{theorem}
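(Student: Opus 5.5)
The plan is to apply the block diagonal dominance machinery of Definition \ref{def:Block_Diag_Dominance}, together with Lemma \ref{lem:Block_Irreducibility}, to $\A_{n,m}$ in \eqref{eq:MM_general}, taking the operator norm throughout. Write $s := \sigma_{\max}(\bW)$ and $t := \sigma_{\min}(\bW)$.

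\textbf{Diagonal blocks.} The diagonal blocks are $\bI + \bW\bW^*$. These are Hermitian positive definite with eigenvalues $1 + \sigma_j(\bW)^2 \ge 1 + t^2$, so they are non-singular and
\begin{align*}
\|(\bI + \bW\bW^*)^{-1}\|_{\op} = \frac{1}{1+t^2}.
\end{align*}

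\textbf{Off-diagonal blocks.} The off-diagonal blocks are $\bW$ and $\bW^*$, each with operator norm $s$. Each interior block row $j \in \{2,\dots,n-1\}$ contains two of them. Using submultiplicativity,
\begin{align*}
\sum_{k\neq j}\|\bA_{jj}^{-1}\bA_{jk}\|_{\op} \le \frac{2s}{1+t^2}.
\end{align*}
Requiring this to be $\le 1$ gives exactly \eqref{eq:Gen_Mat_Weak_Result}. The first and last block rows have only one off-diagonal block, so their sum is at most $\frac{s}{1+t^2}$. Since $s>0$, this is strictly less than $\frac{2s}{1+t^2} \le 1$, so strict inequality holds in at least one row.

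\textbf{Irreducibility.} The matrix $\bB$ of block norms is tri-diagonal with off-diagonal entries $s > 0$. By Fact \ref{fact:Tridiagonal_Irreducible} it is irreducible, so $\A_{n,m}$ is block irreducible. Lemma \ref{lem:Block_Irreducibility} then yields invertibility of $\A_{n,m}$. This is why the hypothesis $s>0$ is needed; when $s = 0$ we have $\A_{n,m} = \bI$ trivially anyway.

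\textbf{Improved bounds.} I will replace the crude product bound $\|(\bI+\bW\bW^*)^{-1}\bW\| \le s/(1+t^2)$ with something sharper. Using the SVD $\bW = \bU\bSigma\bV^*$, we get $(\bI+\bW\bW^*)^{-1}\bW = \bU(\bI+\bSigma^2)^{-1}\bSigma\bV^*$, whose norm is exactly $\max_j \frac{\sigma_j}{1+\sigma_j^2}$. The other term, $(\bI+\bW\bW^*)^{-1}\bW^*$, does not diagonalize the same way, so it is still bounded by $s/(1+t^2)$. The row condition then becomes
\begin{align*}
\max_j \frac{\sigma_j}{1+\sigma_j^2} + \frac{s}{1+t^2} \le 1.
\end{align*}
The function $x/(1+x^2)$ increases on $[0,1]$ and decreases on $[1,\infty)$, which produces the two regimes:
\begin{itemize}
\item If $s \le 1$, the maximum is attained at $x = s$. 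The condition $\frac{s}{1+s^2} + \frac{s}{1+t^2} \le 1$ rearranges to $t^2 \ge \frac{s^3 - s^2 + 2s - 1}{s^2 - s + 1}$ after clearing denominators. One needs $s^2 - s + 1 > 0$, which always holds. This is \eqref{eq:Gen_Mat_Strong_Result_1}.
\item If $t \ge 1$, the maximum is attained at $x = t$. The condition $\frac{t}{1+t^2} + \frac{s}{1+t^2} \le 1$ gives $s \le t^2 - t + 1$, which is \eqref{eq:Gen_Mat_Strong_Result_2}.
\end{itemize}

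\textbf{Main obstacle.} The hard step is justifying the sharper norm for one off-diagonal term while the other remains crude. In particular, I must check which of $\bW$ and $\bW^*$ sits in which position, so the SVD simplification applies to the right term. If needed, I can use column dominance or the Hermitian structure of $\A_{n,m}$ to swap the roles. I must also confirm that strict inequality still holds in the end rows under these refined conditions; it does, because those rows drop one of the two terms and $s > 0$.
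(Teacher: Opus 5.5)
Your proposal is correct and follows essentially the same route as the paper. Both arguments use block diagonal dominance of $\A_{n,m}$ in the operator norm, with the SVD giving the exact norm $\max_j \sigma_j(\bW)/(1+\sigma_j(\bW)^2)$ for the superdiagonal term $(\bI+\bW\bW^*)^{-1}\bW$ and the crude bound $\sigma_{\max}(\bW)/(1+\sigma_{\min}(\bW)^2)$ for the subdiagonal term, and both invoke block irreducibility with Lemma \ref{lem:Block_Irreducibility} to allow equality. The only cosmetic difference is that for \eqref{eq:Gen_Mat_Weak_Result} you bound both terms by $\sigma_{\max}(\bW)/(1+\sigma_{\min}(\bW)^2)$ where the paper bounds the first by $\frac{1}{2}$; both give the same condition $2\sigma_{\max}(\bW) \le 1+\sigma_{\min}(\bW)^2$. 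The ``obstacle'' you flag is already resolved in your own computation, since the superdiagonal block of $\A_{n,m}$ is indeed $\bW$.
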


\begin{proof}
    $\bW \in \C^{m \times m}$ has an SVD, $\bW = \bU \bSigma \bV^*$, where $\bU$ and $\bV$ are unitary and $\bSigma$ contains the singular values of $\bW$.
    To find a condition for $\M_{n,m}$ to be non-singular, we enforce strict block diagonal dominance from Definition \ref{def:Block_Diag_Dominance}. 
    Since $\bW \bW^*$ is positive semi-definite, adding $\bI$ to it results in a positive definite--and thus invertible--matrix:
    \begin{align}
    \begin{split}
        (\bI + \bW \bW^*)^{-1} &= (\bU \bI \bU^* + (\bU \bSigma \bV^*)(\bU \bSigma \bV^*)^*)^{-1}
        = \bU(\bI + \bSigma^2)^{-1} \bU^*
    \end{split}
    \end{align}
    With $\M_{n,m}$ being a tri-diagonal matrix, the Gerschgorin set associated with each block of rows requires only two components, namely $(\bI + \bW \bW^*)^{-1}\bW$ and $(\bI + \bW \bW^*)^{-1}\bW^*$.
    Since the operator norm is unitarily invariant, for the first component,
    \begin{align}
    \begin{split}
        \|(\bI + \bW \bW^*)^{-1} \bW\|_{\op} &= \|\bU(\bI + \bSigma^2)^{-1} \bU^* \bU \bSigma \bV^*\|_{\op} \\
        &= \|(\bI + \bSigma^2)^{-1} \bSigma \|_{\op} \\
        &= \max_{j = \{1,.., m\}} \left(\frac{\sigma_j(\bW)}{1 + \sigma_j(\bW)^2}\right) \le \frac{1}{2}
    \end{split}
    \label{eq:MM_block_diag_term_1}
    \end{align}
    The upper bound of $\frac{1}{2}$ is attained when $\sigma_j(\bW) = 1$ for at least one $j \in \{1,.., m\}$. 
    For the second component, again making use of the unitary invariance of the norm,
    \begin{align}
    \begin{split}
        \|(\bI + \bW \bW^*)^{-1} \bW^*\|_{\op} &= \|\bU(\bI + \bSigma^2)^{-1} \bU^* (\bV^*)^* \bSigma^* \bU^*\|_{\op} \\
        &= \|(\bI + \bSigma^2)^{-1} \bU^* \bV \bSigma\|_{\op} \\
        &\le \|(\bI + \bSigma^2)^{-1}\|_{\op} \|\bU^* \bV \bSigma\|_{\op} \\
        &= \max_{j = \{1,.., m\}} \left(\frac{1}{1 + \sigma_j(\bW)^2}\right) \|\bSigma\|_{\op} \\
        &= \frac{\sigma_{\max}(\bW)}{1 + \sigma_{\min}(\bW)^2}
    \end{split}
    \label{eq:MM_block_diag_term_2}
    \end{align}
    For $\A_{n,m}$ to be strictly block diagonally dominant, which is a \textit{sufficient} condition to guarantee that $\A_{n,m}$ is non-singular, we require:
    \begin{align}\label{eq:MM_block_diag_dom_cond}
        \|(\bI + \bW \bW^*)^{-1} \bW\|_{\op} + \|(\bI + \bW \bW^*)^{-1} \bW^*\|_{\op} \le C < 1
    \end{align}
    for some constant $C$. 
    Using the bounds derived in \eqref{eq:MM_block_diag_term_1} and \eqref{eq:MM_block_diag_term_2} and substituting them into \eqref{eq:MM_block_diag_dom_cond}, we pick $C$ to be:
    \begin{align}
        C= \frac{1}{2} + \frac{\sigma_{\max}(\bW)}{1 + \sigma_{\min}(\bW)^2} < 1
    \end{align}
    This provides a condition on the relationship between $\sigma_{\max}(\bW)$ and $\sigma_{\min}(\bW)$,
    \begin{align}\label{eq:MM_Gen_Diag_Dom_Cond_1}
        \sigma_{\min}(\bW) \le \sigma_{\max}(\bW) < \frac{1}{2}(1 + \sigma_{\min}(\bW)^2)
    \end{align}
    The above bound is not tight. 
    To refine it, note that for $0 \le \sigma_{\min}(\bW) < 1$, $0 \le \sigma_{\max}(\bW) < 1$.
    And, for $\sigma_{\min}(\bW) > 1$, $\sigma_{\max}(\bW) > 1$ as well, and $\M_{n,m}$ is strictly diagonally dominant. 
    Although we could also consider cases where $\sigma_{\min}(\bW) < 1$ and $\sigma_{\max}(\bW) > 1$ with some $\sigma_j(\bW)$ for $j \in \{2,..,m-1\}$ being the singular value closest to $1$, for simplicity we consider the cases when $\sigma_{\min}(\bW) \le \sigma_{\max}(\bW) < 1$ and when $\sigma_{\max}(\bW) \ge \sigma_{\min}(\bW) > 1$.

    \textbf{Case 1:} For $\sigma_{\min}(\bW) \le \sigma_{\max}(\bW) < 1$, $\sigma_{\max}(\bW) = 1$ is the singular value that maximizes \eqref{eq:MM_block_diag_term_1}, so we can improve the $\frac{1}{2}$ bound for the first component, with the second as is:
    \begin{align*}
        \frac{\sigma_{\max}(\bW)}{1 + \sigma_{\max}(\bW)^2} + \frac{\sigma_{\max}(\bW)}{1 + \sigma_{\min}(\bW)^2} < 1
    \end{align*}
    Re-arranging provides a constraint condition on $\sigma_{\max}(\bW)$:
    \begin{align}\label{eq:MM_Gen_Diag_Dom_Cond_2}
        \left(\frac{\sigma_{\max}(\bW)^3 - \sigma_{\max}(\bW)^2 + 2\sigma_{\max}(\bW) - 1}{\sigma_{\max}(\bW)^2 - \sigma_{\max}(\bW) + 1}\right)^{1/2} < \sigma_{\min}(\bW) 
    \end{align}

    \textbf{Case 2:} When $\sigma_{\max}(\bW) \ge \sigma_{\min}(\bW) > 1$, $\sigma_{\min}(\bW)$ is the singular value that maximizes \eqref{eq:MM_block_diag_term_1}, and keeping \eqref{eq:MM_block_diag_term_2} as is: 
    \begin{align*}
        \frac{\sigma_{\min}(\bW)}{1 + \sigma_{\min}(\bW)^2} + \frac{\sigma_{\max}(\bW)}{1+\sigma_{\min}(\bW)^2} < 1
    \end{align*}
    is the condition that, when re-arranged, leads to
    \begin{align}\label{eq:MM_Gen_Diag_Dom_Cond_3}
        \sigma_{\min}(\bW) \le \sigma_{\max}(\bW) < \sigma_{\min}(\bW)^2 - \sigma_{\min}(\bW) + 1
    \end{align}

    The above conditions \eqref{eq:MM_Gen_Diag_Dom_Cond_1}, \eqref{eq:MM_Gen_Diag_Dom_Cond_2}, and \eqref{eq:MM_Gen_Diag_Dom_Cond_3} are strict inequalities. 
    To establish a sharper bound by showing they hold under equality too, we use Lemma \ref{lem:Block_Irreducibility}.
    $\A_{n,m}$ is block irreducible if the matrix $\bB$ the entries of which are the norms of each of the blocks of $\A_{n,m}$ is irreducible as in Definition \ref{def:Block_Irreducibility}. 
    Since $\A_{n,m}$ is a block tri-diagonal matrix, $\bB$ is a scalar tri-diagonal matrix. 
    By Fact \ref{fact:Tridiagonal_Irreducible}, $\bB$ is irreducible if none of its super or sub-diagonal entries are zero, which is true when $\|\bW\|_{\op} \neq 0$. 
    By assumption, $\sigma_{\max}(\bW) > 0$, so $\|\bW\|_{\op} \neq 0$, and thus $\A_{n,m}$ is block irreducible. 
    (If $\bW = \bzero$, then $\A_{n,m}$ reduces to the identity matrix and is full-rank and well-conditioned.) 

    Provided \eqref{eq:MM_Gen_Diag_Dom_Cond_1}, \eqref{eq:MM_Gen_Diag_Dom_Cond_2}, or \eqref{eq:MM_Gen_Diag_Dom_Cond_3} hold under the relevant range restrictions on the spectrum of $\bW$, then $\A_{n,m}$ is block diagonally dominant, and inequality is achieved when $j = 1$, since diagonal dominance requires $\|(\bI + \bW \bW^*)^{-1} \bW \|_{\op} \le 1$ when $j = 1$, and by \eqref{eq:MM_block_diag_term_1} it is always less than $\frac{1}{2}$. 
    And, by Lemma \ref{lem:Block_Irreducibility}, $\A_{n,m}$ is non-singular when strengthening \eqref{eq:MM_Gen_Diag_Dom_Cond_1}, \eqref{eq:MM_Gen_Diag_Dom_Cond_2}, and \eqref{eq:MM_Gen_Diag_Dom_Cond_3} from strict inequality to equality.
\end{proof}

In the scalar and Hermitian cases, $\kappa(\M_{n,m})$ was small when the eigenvalues of $\bW$ were within the unit circle. 
Even so, for $\sigma_{\max}(\bW)$ near $1$, $\kappa(\M_{n,m})$ could grow exponentially large, and thus the best distribution of the singular values of $\bW$ would be in a small disc within the unit circle. 
For general $\bW$, the conditions that Theorem \ref{thm:Block_Mat_Gen_Diag_Dom} provide agree with the observation that $\M_{n,m}$ is non-singular, and thus of a finite condition number, when $\sigma_{\max}(\bW)$ is contained well-within the unit circle.

\begin{remark}
    The conditions in Theorem \ref{thm:Block_Mat_Gen_Diag_Dom}, particularly \eqref{eq:MM_Gen_Diag_Dom_Cond_2} and \eqref{eq:MM_Gen_Diag_Dom_Cond_3}, are sufficient to ensure $\A_{n,m}$ is non-singular.
    Any improvement in the bounds will come from establishing a tighter bound on the term $\|(\bI + \bW \bW^*)^{-1} \bW^*\|$ as in \eqref{eq:MM_block_diag_term_2}, which would require knowledge of additional singular values, $\sigma_j(\bW)$ for $j \in \{2,...,n-1\}$, and the orientation of the basis vectors $\bU$ in relation to the canonical basis of $\bI$, or using an alternative method that admits a tighter eigenvalue inclusion set for $\A_{n,m}$.
\end{remark}

\begin{figure}
    \centering
    \begin{subfigure}{.33\textwidth}
        \centering
        \includegraphics[width=\linewidth]{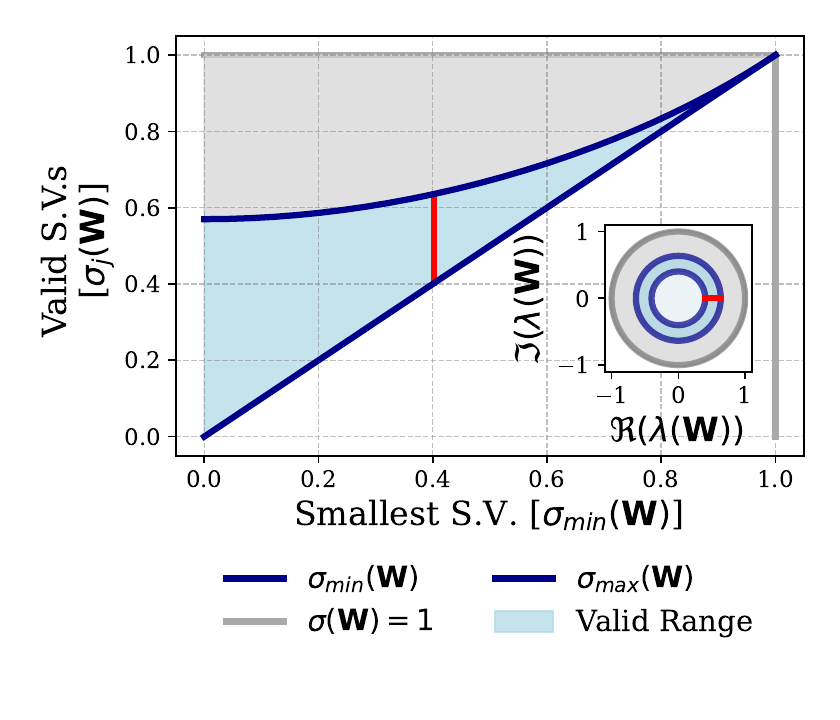}
        \vspace{-1cm}
        \caption{\centering}
        \label{fig:RecMat_Gen_Sing_Cond}
    \end{subfigure}%
    \begin{subfigure}{.33\textwidth}
        \centering
        \includegraphics[width=\linewidth]{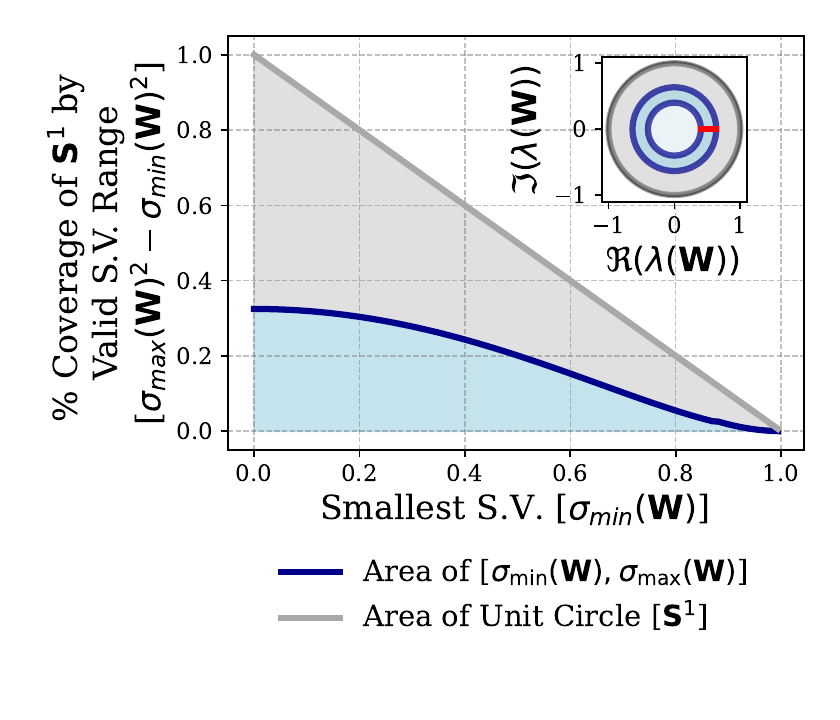}
        \vspace{-1cm}
        \caption{\centering}
        \label{fig:RecMat_Gen_Sing_Area}
        \end{subfigure}
    \begin{subfigure}{.33\textwidth}
        \centering
        \includegraphics[width=\linewidth]{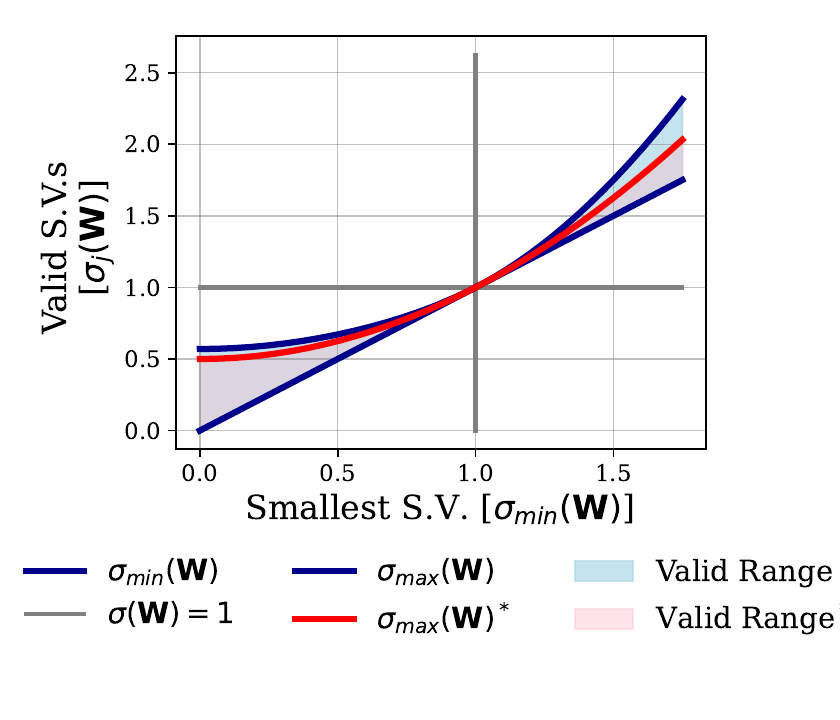}
        \vspace{-1cm}
        \caption{\centering}
        \label{fig:RecMat_Gen_Sing_Cond2}
    \end{subfigure}
    \caption{(a) The admissible spectrum of $\bW$ lies within the blue region ranging from the dark blue lines $\sigma_{\min}(\bW)$ and $\sigma_{\max}(\bW)$. 
    The area shaded in gray, which \textit{is} admissible in the scalar and block Hermitian cases, represents the \textit{potential} area of improved coverage if a tighter bound on the block diagonal dominance condition is even possible.
    The red line provides an example range from $\sigma_{\min}(\bW)$ to $\sigma_{\max}(\bW)$ and the toroidal section of the unit circle it defines in which the spectrum of $\bW$ can reside for $\M_n$ to be full row rank and thus an embedding.
    (b) The associated \textit{area} of the torus defined by the admissible singular value range relative to the area of the unit circle.
    (c) The admissible singular value range of $\bW$ extended beyond $\sigma_{\min}(\bW) \in [0,1]$ to $[0,2]$, with the weaker result \eqref{eq:Gen_Mat_Weak_Result} in red and the stronger results \eqref{eq:Gen_Mat_Strong_Result_1} and \eqref{eq:Gen_Mat_Strong_Result_2} in blue.
    The ``pinch" point where $\sigma_{\max}(\bW), \sigma_{\min}(\bW) \to 1$ is when $\bW$ is unitary.}
    \label{fig:RecMat_Gen_Sings}
\end{figure}

\begin{remark}
    If $\bU = \bV$, then $\bU \bV^* = \bI$, meaning $\bW$ is Hermitian and $\M_{n,m}$ is strictly block diagonal dominant when $\sigma_{\max}(\bW) \neq 1$, which agrees with prior results. 
\end{remark}

\subsection{Unitary Weight Matrices}

A special case for non-Hermitian $\bW$ is when $\bW$ is unitary. 
In machine learning applications, RNNs of the form \eqref{eq:LRNN} with weight matrices $\bW$ the eigenvalues of which deviate from absolute value $1$ suffer from exploding and vanishing gradient issues that pose difficulties for learning long-term dependencies from the data. 
One approach to address these issues is to enforce $\bW$ to be unitary, which has been shown to work well, at least for nonlinear RNN ~\cite{Arjovsky_2016}. 

For LRNNs, unitary $\bW$ are viable, but potentially unstable. 
In effect, unitary matrices perform a rotation of coordinates with each iterate of the recurrence relation. 
The unitary matrix is shown in Figure \ref{fig:RecMat_Gen_Sing_Cond2} as the ``pinch point" in the admissible range ($[\sigma_{\min}(\bW), \sigma_{\max}(\bW)]$) of $\bW$, where $\sigma_{\min}(\bW), \sigma_{\max}(\bW) \to 1$, and thus $\sigma_j(\bW) \to 1$ for all $j \in \{1,...,n\}$. 
The conditions from Theorem \ref{thm:Block_Mat_Gen_Diag_Dom} indicate that multivariate delay-coordinate embeddings of time series using LRNNs of the form \eqref{eq:LRNN} are always embeddings for any choice of unitary matrix $\bW$. 
Such embeddings, however, may not be stable as $\M_{n,m}$, despite being full row rank, may be ill-conditioned.

\subsubsection{Singular Values}

Bounding the singular values of $\M_{n,m}$ with an arbitrary $\bW$ is more difficult. 
However, with certain assumptions, we can generate a few helpful results. 
Often, the smallest singular value is the most difficult to bound. 
Yet, for the largest singular value, we can establish the following general bound.

\begin{proposition}\label{prop:Block_Mat_Gen_S_Max}
    Let $\M_{n,m}$ be as in \eqref{eq:M_general}. 
    Then, for any $\sigma_{\max}(\bW) \ge 0$, we have $\sigma_{\max}(\M_{n,m}) \le \sigma_{\max}(\bW) + 1$.
\end{proposition}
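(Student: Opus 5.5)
The plan is to split $\M_{n,m}$ into its block-diagonal part and its block-superdiagonal part and then apply the triangle inequality for the operator norm. Write $\M_{n,m} = \mathbb{E}_1 + \mathbb{E}_2$. Here $\mathbb{E}_1 = [\,\bI_{mn} \;\; \bzero\,] \in \C^{mn \times m(n+1)}$ contains the identity blocks in block columns $1,\dots,n$. The matrix $\mathbb{E}_2 = [\,\bzero \;\; \bI_n \otimes \bW\,]$ contains $\bW$ in block columns $2,\dots,n+1$. Since $\sigma_{\max}(\M_{n,m}) = \|\M_{n,m}\|_{\op}$, it suffices to show
\[
\|\M_{n,m}\|_{\op} \le \|\mathbb{E}_1\|_{\op} + \|\mathbb{E}_2\|_{\op} \le 1 + \sigma_{\max}(\bW).
\]

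The two norms are handled separately. The matrix $\mathbb{E}_1$ is a coordinate projection: it keeps the first $mn$ entries of a vector and discards the rest. Hence $\|\mathbb{E}_1 \bx\|_2 \le \|\bx\|_2$ for every $\bx$, and $\|\mathbb{E}_1\|_{\op} = 1$. The matrix $\mathbb{E}_2$ is the block-diagonal operator $\diag(\bW,\dots,\bW)$ applied after dropping the first block of coordinates. Consequently
\[
\mathbb{E}_2 \mathbb{E}_2^* = \diag(\bW\bW^*,\dots,\bW\bW^*),
\]
whose largest eigenvalue is $\sigma_{\max}(\bW)^2$. This gives $\|\mathbb{E}_2\|_{\op} = \sigma_{\max}(\bW)$, and combining the two norms yields the claim. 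The argument uses nothing beyond unitary invariance and the definition of the operator norm, so it holds for arbitrary $\bW$, with no Hermitian or diagonal-dominance hypothesis.

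As a cross-check consistent with the paper's Gerschgorin style, I will also apply the block Gerschgorin reasoning to $\A_{n,m}$ in \eqref{eq:MM_general}. Any eigenvalue $\lambda$ satisfies $\lambda = \bx^*\A_{n,m}\bx$ for some unit vector $\bx = (\bx_1,\dots,\bx_n)$, and expanding this quadratic form gives
\[
\lambda = \sum_{j=1}^{n} \|\bx_j\|^2 + \sum_{j=1}^{n}\|\bW^*\bx_j\|^2 + 2\,\mathrm{Re}\sum_{j=1}^{n-1} \bx_j^*\bW\bx_{j+1}.
\]
Applying Cauchy--Schwarz to the cross terms gives $\lambda \le 1 + \sigma_{\max}(\bW)^2 + 2\sigma_{\max}(\bW) = (1+\sigma_{\max}(\bW))^2$. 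This is the same bound obtained in the Hermitian case of Theorem \ref{thm:Block_Mat_Herm_Cond}.

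The only delicate point is bookkeeping. $\mathbb{E}_2$ must be identified correctly as a shifted block-diagonal operator, so that its norm is exactly $\sigma_{\max}(\bW)$ and not something inflated by the shift. Beyond that, I expect no real obstacle; the difficulty the paper alludes to concerns $\sigma_{\min}$, not this upper bound.
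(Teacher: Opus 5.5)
Your proof is correct, and it takes a different route from the paper.

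The paper bounds the eigenvalues of $\A_{n,m}=\M_{n,m}\M_{n,m}^*$ with the block Gerschgorin theorem (Theorem \ref{thm:Block_Gerschgorin}). It uses $\|(\bI+\bW\bW^*-\lambda\bI)^{-1}\|_{\op}=1/\min_j|\lambda-(1+\sigma_j(\bW)^2)|$, and bounds both off-diagonal blocks by $\sigma_{\max}(\bW)$. This gives $\min_j|\lambda-(1+\sigma_j(\bW)^2)|\le 2\sigma_{\max}(\bW)$ for every eigenvalue $\lambda$, hence $\lambda_{\max}(\A_{n,m})\le(1+\sigma_{\max}(\bW))^2$.

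You never form $\A_{n,m}$. The splitting $\M_{n,m}=\mathbb{E}_1+\mathbb{E}_2$ plus the triangle inequality gives the bound in one line, with no resolvent estimates. It also avoids two points the Gerschgorin route has to handle for arbitrary $\bW$. First, Theorem \ref{thm:Block_Gerschgorin} must be used in its general form, since no block diagonal dominance is available here. Second, an eigenvalue of $\A_{n,m}$ lying in the spectrum of $\bI+\bW\bW^*$, where the resolvent is undefined, needs a separate (easy) remark.

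What the paper's route buys is a two-sided inclusion region for every eigenvalue, which it reuses for the lower bound on $\sigma_{\min}(\M_{n,m})$ in Proposition \ref{prop:Block_Mat_Gen_Cond_Bds}. Your decomposition delivers that too. Since $\mathbb{E}_1^*\bx=(\bx,\bzero)$ is an isometry, $\|\M_{n,m}^*\bx\|\ge\|\bx\|-\|\mathbb{E}_2^*\bx\|\ge(1-\sigma_{\max}(\bW))\|\bx\|$. So $\sigma_{\min}(\M_{n,m})\ge 1-\sigma_{\max}(\bW)$, which is at least as strong as the paper's $(1+\sigma_{\min}(\bW)^2-2\sigma_{\max}(\bW))^{1/2}$. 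It also holds for all $\sigma_{\max}(\bW)<1$, not only below $\tfrac12$.

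One small remark: your ``cross-check'' is a Rayleigh-quotient computation, not a Gerschgorin argument. It is just the squared form of the same triangle inequality. Its cross-term estimate needs Cauchy--Schwarz twice: inside each $\bx_j^*\bW\bx_{j+1}$, and across the sum to get $\sum_j\|\bx_j\|\|\bx_{j+1}\|\le 1$.
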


\begin{proof}
    Using Theorem \ref{thm:Block_Gerschgorin}, the eigenvalues of $\A_{n,m}$ are contained in the region:
    \begin{align*}
        1 &\le \|(\bI + \bW \bW^* - \lambda(\A_{n,m}) \bI)^{-1} \bW\|_{\op} + \|(\bI + \bW \bW^* - \lambda(\A_{n,m}) \bI)^{-1} \bW^*\|_{\op} \\
        &\le \frac{2 \sigma_{\max}(\bW)}{\min_{j = \{1,.., m\}} |\lambda(\A_{n,m}) - 1 + \sigma_j(\bW)^2|}
    \end{align*}
    Re-arranging, we have:
    \begin{align*}
        \min_{j = \{1,.., m\}} \left|\lambda(\A_{n,m}) - (1 + \sigma_j(\bW)^2) \right| \le 2 \sigma_{\max}(\bW)
    \end{align*}
    Suppose that $\sigma_c(\bW)$ is the minimizer for $c \in \{1,...,m\}$. 
    Then,
    \begin{align*}
        \sigma_c(\bW)^2 -2 \sigma_{\max}(\bW) + 1 \le \lambda(\A_{n,m}) \le \sigma_c(\bW)^2 + 2 \sigma_{\max}(\bW) + 1
    \end{align*}
    Since $\A_{n,m}$ is Hermitian, by the spectral theorem it has all real eigenvalues. 
    As a Hermitian product, it is positive semi-definite, so all the eigenvalues are non-negative.
    For an upper bound,
    \begin{align*}
        \lambda_{\max}(\A_{n,m}) \le \sigma_c(\bW)^2 + 2 \sigma_{\max}(\bW) + 1 
        \le (\sigma_{\max}(\bW) + 1)^2
    \end{align*}
    Thus, $\sigma_{\max}(\M_{n,m}) \le \sigma_{\max}(\bW) + 1$, so $\sigma_{\max}(\M_{n,m})$ scales independently of $m$ and $n$ for viable $\sigma_{\max}(\bW)$.
\end{proof}

\begin{remark}
    This bound corresponds with the bound for $\sigma_{\max}(\bW)$ for both the scalar case (see Proposition \ref{prop:Scalar_Mat_Cond}) and Hermitian case (see Theorem \ref{thm:Block_Mat_Herm_Cond}).
\end{remark}

In the context of generating well-conditioned LRNN delay-coordinate maps, we have seen in the case of Hermitian $\bW$ that eigenvalues in the unit circle tend to produce smaller condition numbers and determinants and thus more stable maps.
It is reasonable then, in the general case, to consider eigenvalues of $\bW$ in the unit circle, i.e. $\sigma_{\max}(\bW) < 1$. 
In this direction, we can arrive at the following, more restrictive, but still informative result.

\begin{proposition}
    \label{prop:Block_Mat_Gen_Cond_Bds}
    Let $\A_{n,m}$ be strictly row block strictly diagonally dominant. 
    For $\sigma_{\max}(\bW) < \frac{1}{2}$,
    \begin{align}\label{eq:RecMat_Gen_Cond_Bds}
        \kappa(\M_{n,m}) \le \left(\frac{1 + 2 \sigma_{\max}(\bW) + \sigma_{\max}(\bW)^2}{1 - 2 \sigma_{\max}(\bW) + \sigma_{\min}(\bW)^2}\right)^{{1/2}}
    \end{align}
\end{proposition}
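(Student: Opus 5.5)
The plan is to bound $\lambda_{\max}(\A_{n,m})$ from above and $\lambda_{\min}(\A_{n,m})$ from below separately, and then use $\kappa(\M_{n,m}) = \left(\lambda_{\max}(\A_{n,m})/\lambda_{\min}(\A_{n,m})\right)^{1/2}$. The upper bound is already available from Proposition \ref{prop:Block_Mat_Gen_S_Max}: $\lambda_{\max}(\A_{n,m}) \le (1+\sigma_{\max}(\bW))^2 = 1 + 2\sigma_{\max}(\bW) + \sigma_{\max}(\bW)^2$, which is exactly the numerator in \eqref{eq:RecMat_Gen_Cond_Bds}. The real work is therefore a lower bound of the form $\lambda_{\min}(\A_{n,m}) \ge 1 - 2\sigma_{\max}(\bW) + \sigma_{\min}(\bW)^2$.

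For the lower bound I would argue directly from the quadratic form rather than through the block Gerschgorin sets, since $\A_{n,m}$ is Hermitian. Split $\A_{n,m} = \D + \mathbb{E}$, where $\D = \diag(\bI + \bW\bW^*, \dots, \bI + \bW\bW^*)$ and $\mathbb{E}$ is the off-diagonal part carrying $\bW$ on the super-diagonal and $\bW^*$ on the sub-diagonal. By Weyl's inequality,
\begin{align*}
\lambda_{\min}(\A_{n,m}) \ge \lambda_{\min}(\D) - \|\mathbb{E}\|_{\op}.
\end{align*}
Here $\lambda_{\min}(\D) = 1 + \sigma_{\min}(\bW)^2$. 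For a vector $\bx = (\bx_1,\dots,\bx_n)$, the quantity $|\bx^*\mathbb{E}\bx| \le 2\sum_j |\bx_j^*\bW\bx_{j+1}|$ is at most $2\sigma_{\max}(\bW)\sum_j\|\bx_j\|\|\bx_{j+1}\| \le 2\sigma_{\max}(\bW)\|\bx\|^2$ by Cauchy--Schwarz. Equivalently, the row-sum (block Gerschgorin) radius is at most $2\|\bW\|_{\op}$. This gives $\lambda_{\min}(\A_{n,m}) \ge 1 + \sigma_{\min}(\bW)^2 - 2\sigma_{\max}(\bW)$.

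The hypothesis $\sigma_{\max}(\bW) < \tfrac12$ ensures this lower bound is strictly positive, since $1 - 2\sigma_{\max}(\bW) > 0$ regardless of $\sigma_{\min}(\bW)$. The assumed strict block diagonal dominance then guarantees, via Lemma \ref{lemma:Block_Diag_Dominance_Invertible}, that $\A_{n,m}$ is nonsingular, so the pseudo-inverse and condition number are well defined. Taking the ratio and square root yields \eqref{eq:RecMat_Gen_Cond_Bds}.

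I expect the main obstacle to be justifying the lower bound in a way consistent with the paper's block Gerschgorin framework. Theorem \ref{thm:Block_Gerschgorin} phrases inclusion via $\|(\bA_{jj} - z\bI)^{-1}\bA_{jk}\|$. Inverting this for real $z < 1 + \sigma_{\min}(\bW)^2$ gives $\|(\bI + \bW\bW^* - z\bI)^{-1}\| = 1/(1 + \sigma_{\min}(\bW)^2 - z)$, so inclusion forces $1 + \sigma_{\min}(\bW)^2 - z \le 2\sigma_{\max}(\bW)$. That is the same bound, and I would present it this way as a cross-check, but rely on the Weyl/quadratic-form argument as the clean route.
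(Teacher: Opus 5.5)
Your proof is correct, but it reaches the key lower bound $\lambda_{\min}(\A_{n,m}) \ge 1 - 2\sigma_{\max}(\bW) + \sigma_{\min}(\bW)^2$ by a different route from the paper.

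\textbf{The paper's route.} It stays inside the block Gerschgorin framework. For each eigenvalue $\lambda$ of $\A_{n,m}$ it applies Theorem~\ref{thm:Block_Gerschgorin}. It then bounds both $\|(\bI+\bW\bW^*-\lambda\bI)^{-1}\bW\|_{\op}$ and $\|(\bI+\bW\bW^*-\lambda\bI)^{-1}\bW^*\|_{\op}$ by $\sigma_{\max}(\bW)/\min_j|\lambda-(1+\sigma_j(\bW)^2)|$. From the resulting inclusion $\min_j|\lambda-(1+\sigma_j(\bW)^2)|\le 2\sigma_{\max}(\bW)$ it reads off both the upper bound on $\lambda_{\max}$ and the lower bound on $\lambda_{\min}$. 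This is exactly the computation you present as your cross-check.

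\textbf{Your route.} Your main argument uses the Hermitian structure directly: Weyl's inequality for the splitting $\A_{n,m}=\D+\mathbb{E}$, together with the Cauchy--Schwarz estimate $\|\mathbb{E}\|_{\op}\le 2\sigma_{\max}(\bW)$. This buys you several things.
\begin{enumerate}
\item It is more elementary and self-contained. It needs no external block Gerschgorin theorem, and you never have to decide which index $j$ is the minimizer; the paper asserts this without justification, although its conclusion holds regardless, since $\sigma_c(\bW)^2\ge\sigma_{\min}(\bW)^2$.
\item Positivity of $\lambda_{\min}(\A_{n,m})$, and hence invertibility, follows immediately from $\sigma_{\max}(\bW)<\frac{1}{2}$. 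So the diagonal-dominance hypothesis and your appeal to Lemma~\ref{lemma:Block_Diag_Dominance_Invertible} are redundant rather than needed.
\item It sharpens easily. Writing $y_j=\|\bx_j\|$, the same quadratic-form estimate is controlled by the path adjacency matrix, giving $\|\mathbb{E}\|_{\op}\le 2\sigma_{\max}(\bW)\cos\left(\frac{\pi}{n+1}\right)$. This recovers the exact scalar $\sigma_{\min}$ when $m=1$.
\end{enumerate}

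\textbf{What the paper's route buys.} It localizes every eigenvalue, not only the extreme ones, near the points $1+\sigma_j(\bW)^2$. It also keeps the argument in the same machinery used for Theorem~\ref{thm:Block_Mat_Gen_Diag_Dom} and Proposition~\ref{prop:Block_Mat_Gen_S_Max}.
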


\begin{proof}
    If $\sigma_{\max}(\bW) < \frac{1}{2}$, then $\A_{n,m}$ is strictly block diagonally dominant and thus non-singular. 
    Consider the block Gerschgorin eigenvalue inclusion set from Theorem \ref{thm:Block_Gerschgorin} that contains $\lambda$, the eigenvalue of $\A_{n,m}$:
    \begin{align*}
        R_j = \sum_{k = 1, k \neq j}^n \|(A_{jj} - \lambda I)^{-1} A_{jk}\|_{\op} \ge 1
    \end{align*}
    In the worst case, when $j \in \{2,...,n-1\}$,
    \begin{align*}
        1 &\le \|((1-\lambda)\bI + \bSigma^2)^{-1} \bSigma\|_{\op} + \|((1-\lambda)\bI + \bSigma^2)^{-1}\| \|\bSigma\|_{\op} \\
        &= \max_{j \in \{1,.., m\}} \left( \frac{\sigma_j(\bW)}{|1 - \lambda + \sigma_j(\bW)^2|} \right) + \sigma_{\max}(\bW) \max_{j \in \{1,.., m\}} \left( \frac{1}{|1 - \lambda + \sigma_j(\bW)^2|} \right) \\
        &\le \frac{2 \sigma_{\max}(\bW)}{\min_{j \in \{1,.., m\}} |\lambda - (1 + \sigma_j(\bW)^2)|}
    \end{align*}
    The above expression will define a lower bound on $\lambda_{\min}:=\lambda_{\min}(\A_{n,m})$, and an upper bound on $\lambda_{\max}:= \lambda_{\max}(\A_{n,m})$. 
    Since $\A_{n,m}$ is Hermitian, its eigenvalues are real, so $\lambda_{\min}, \lambda_{\max} \in \R$, and thus:
    \begin{align*}
        -2 \sigma_{\max}(\bW) \le \min_{j \in \{1,.., m\}} |\lambda - (1 + \sigma_j(\bW)^2)| \le 2 \sigma_{\max}(\bW)
    \end{align*}
    where the minimizer $j$ depends on $\lambda$.
    Recall $\sigma_{j}(\M_{n,m}) = \sqrt{\lambda_j(\A_{n,m})}$. 
    For $\lambda_{\max}$, $j = 1$ is the minimizer, and an upper bound is:
    \begin{align*}
        \sigma_{\max}(\M_{n,m}) \le 1 + \sigma_{\max}(\bW)
    \end{align*}
    For $\lambda_{\min}$, $j = m$ is the minimizer, leading to the lower bound:
    \begin{align*}
        \sigma_{\min}(\M_{n,m}) \ge (1 + \sigma_{\min}(\bW)^2 - 2 \sigma_{\max}(\bW))^{1/2}
    \end{align*}
    Combining the above two bounds leads to a bound on  $\kappa(\M_{n,m})$.
\end{proof}

\begin{remark}
   Importantly, the above bound shows the delay-matrix condition number, $\kappa(\M_{n,m})$, grows independently of the size of the weight matrix $\bW$ and the number of lags $n$, at least for $\sigma_{\max}(\bW) \le \frac{1}{2}$, implying additional lags or hidden states will mean $\kappa(\M_{n,m})$ is still bounded.
   While the bound is relatively good for small $\sigma_{\max}(\bW)$, agreeing as it should when $\bW = 0$ and thus $\sigma_{\max}(\bW) = 0$ so that $\A_{n,m}$ is the identity with $\kappa(\bM_n) = 1$, as $\sigma_{\max}(\bW) \to \frac{1}{2}$, the bound becomes unbounded. 
   This seems to agree with the unbounded behavior of the condition number in the scalar and Hermitian cases for eigenvalues approaching a magnitude of $1$.
\end{remark}

\begin{remark}
    The bound \eqref{eq:RecMat_Gen_Cond_Bds} in Proposition \ref{prop:Block_Mat_Gen_Cond_Bds} uses \eqref{eq:MM_Gen_Diag_Dom_Cond_1}, a bound that is looser than \eqref{eq:MM_Gen_Diag_Dom_Cond_3}. 
    Using \eqref{eq:MM_Gen_Diag_Dom_Cond_3} could offer a better bound. 
\end{remark}

\begin{remark}
    In the Hermitian case, $\bW$ being diagonalizable meant $\A_{m,n}$ could be turned into a series of unitary matrices multiplied by an ultimately diagonal matrix (see Remark \ref{rmk:Herm_Mat_Factor}, essentially indicating that the dynamics could become completely decoupled in some appropriate coordinate frame via rotation.
    Furthermore, that $\bW$ is Hermitian implies, by the spectral theorem, that it possesses all real eigenvalues, and therefore can only introduce growth correlations of exponential growth and decay between current and past iterates in the LRNN.
    In the general case, $\bW$ may have complex eigenvalues and cannot in general be factored into a series of unitary matrices multiplied by a diagonal matrix; thus, the dynamics may typically be coupled to each other, and the presence of complex eigenvalues in $\bW$ may allow more expressive oscillatory behavior relating present to past states.
\end{remark}

\begin{figure}
    \centering
    \begin{subfigure}{.33\textwidth}
        \centering
        \includegraphics[width=0.95\linewidth]{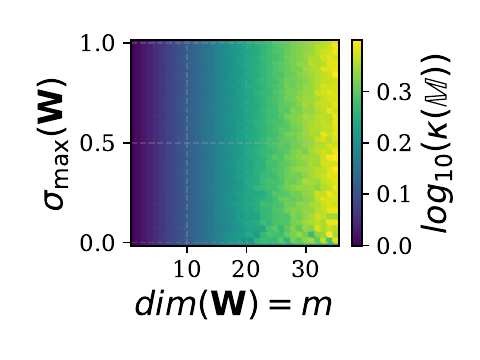}
        \vspace{-5mm}
        \caption{\centering $n = 2$}
        \label{fig:RecMat_Gen_Cond_2}
    \end{subfigure}%
    \begin{subfigure}{.33\textwidth}
        \centering
        \includegraphics[width=0.95\linewidth]{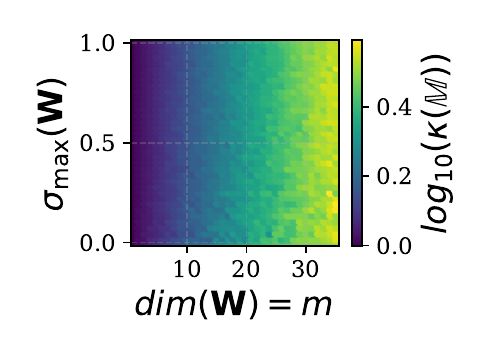}
        \vspace{-5mm}
        \caption{\centering $n = 8$}
        \label{fig:RecMat_Gen_Cond_8}
    \end{subfigure}%
    \begin{subfigure}{.33\textwidth}
        \centering
        \includegraphics[width=0.95\linewidth]{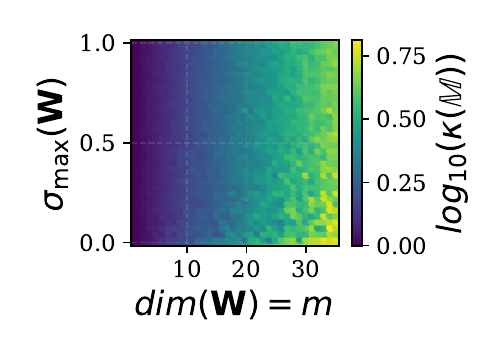}
        \vspace{-5mm}
        \caption{\centering $n = 16$}
        \label{fig:RecMat_Gen_Cond_16}
    \end{subfigure}
    \caption{Log condition numbers $\kappa(\M_{n,m})$, where the $\M_{n,m}$ have general $\bW$, are plotted against a continuum of choices of bounds on the maximal singular value of $\bW$, $\sigma_{\max}(\bW)$, as well as for various choices of sizing $m$ for $\bW$. 
    The sub-figures depict the results for (a) $n = 4$, (b) $n = 8$, and (c) $n = 16$ lags. 
    Increasing $m$ increases the ill-conditioning of $\M_{n,m}$. Note the difference in colorbar scaling.}
    \label{fig:RecMat_Gen_Cond}
\end{figure}

\begin{figure}
    \centering
    \begin{subfigure}{.33\textwidth}
        \centering
        \includegraphics[width=.95\linewidth]{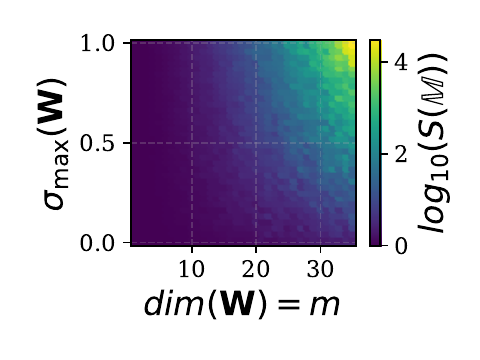}
        \vspace{-5mm}
        \caption{\centering $n = 2$}
        \label{fig:RecMat_Gen_Det_2}
    \end{subfigure}%
    \begin{subfigure}{.33\textwidth}
        \centering
        \includegraphics[width=.95\linewidth]{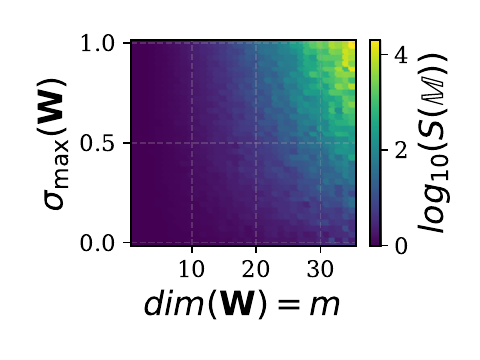}
        \vspace{-5mm}
        \caption{\centering $n = 8$}
        \label{fig:RecMat_Gen_Det_8}
    \end{subfigure}%
    \begin{subfigure}{.33\textwidth}
        \centering
        \includegraphics[width=.95\linewidth]{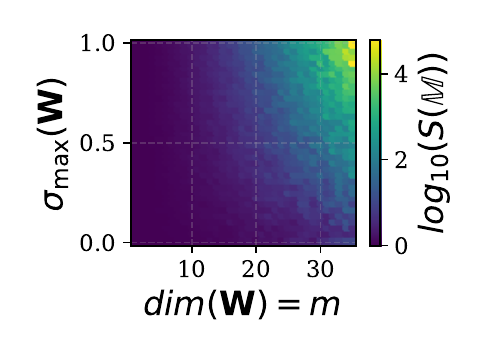}
        \vspace{-5mm}
        \caption{\centering $n = 16$}
        \label{fig:RecMat_Gen_Det_16}
    \end{subfigure}
    \caption{Log values of the generalized determinant $S(\M_{n,m})$, for $\M_{n,m}$ with arbitrary $\bW$, are plotted against varying choices of $\sigma_{\max}(\bW)$ and for different sizes $m$ of $\bW$, with the sub-figures showing: (a) $n = 4$, (b) $n = 8$, and (c) $n = 16$ delays. Note the slight difference colorbar scaling.}
    \label{fig:RecMat_Gen_Det}
\end{figure}

\begin{figure}
    \centering
    \begin{subfigure}{.5\textwidth}
        \centering
        \includegraphics[width=.95\linewidth]{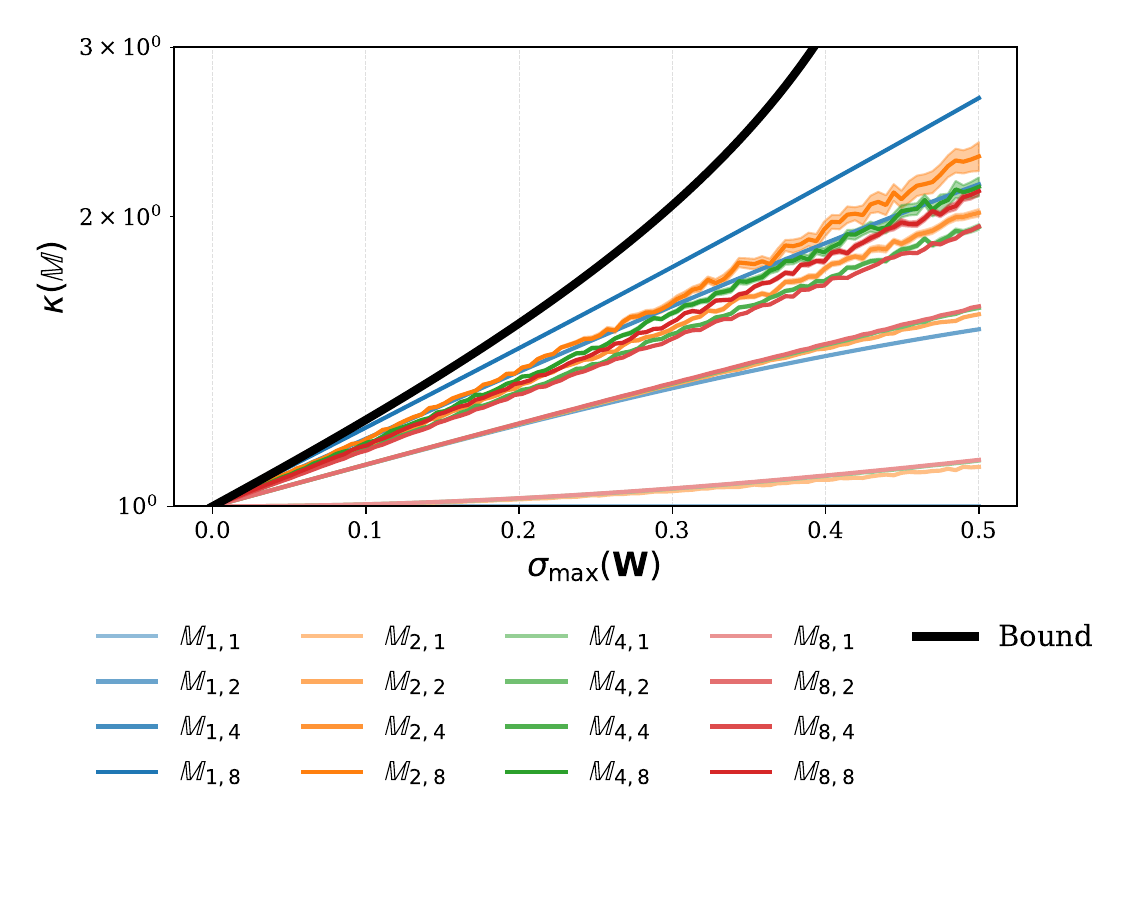}
        \vspace{-7mm}
        \caption{\centering}
        \label{fig:RecMat_Gen_Bound_Test}
    \end{subfigure}%
    \begin{subfigure}{.5\textwidth}
        \centering
        \includegraphics[width=.95\linewidth]{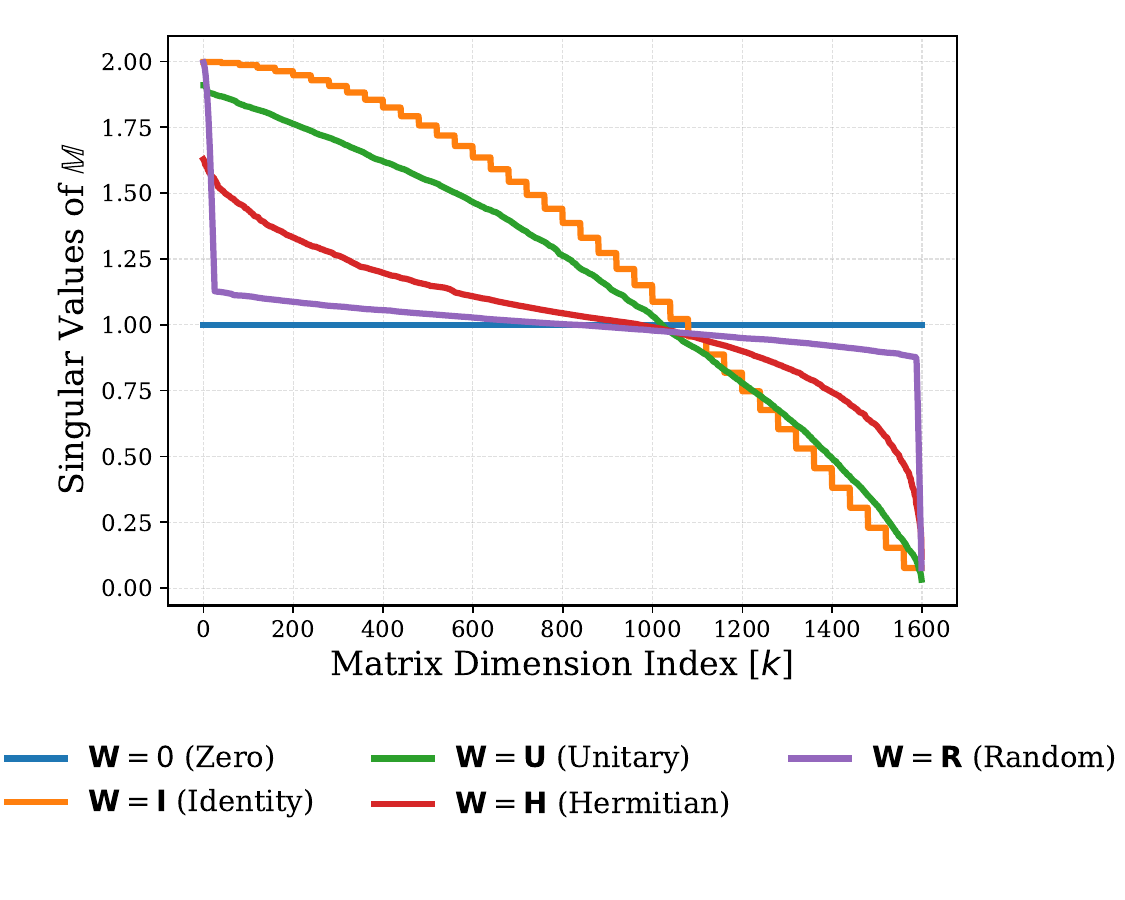}
        \vspace{-7mm}
        \caption{\centering}
        \label{fig:RecMat_Gen_Bound_Test_1}
    \end{subfigure}
    \caption{(a) shows in black the bound on $\kappa(\M_{n,m})$ from Proposition \ref{prop:Block_Mat_Gen_Cond_Bds} that is independent of $m$ and $n$, along with $\kappa(\M_{n,m})$ for various numerically-simulated $\M_{n,m}$ with $\bW$ of varying dimensions $m$ and lags $n$, plotted against $\sigma_{\max}(\bW)$. 
    While the bound is unbounded on a log scale as $\sigma_{\max}(\bW) \to 1$, all the numerically-simulated results grow roughly linearly in the log-scale.
    Increasing $m$ reduces $\kappa(\M_{n,m})$ for a fixed $\sigma_{\max}(\bW)$, while increasing $n$ increases $\kappa(\M_{n,m})$.
    (b) depicts a comparison of the singular value distributions of $\M_{n,m}$ for various classes of $\bW$. 
    The distribution for the random $\bW$, which are almost always full rank, roughly approaches the distribution for when $\bW = \bzero$, and the distribution for unitary $\bW$ is similar to that of identity $\bW$, as both types of $\bW$ have unit-magnitude eigenvalues. 
    When $\bW = \bzero$ or $\bW$ is random, the spectrum of $\M_{n,m}$ approaches unity, in which case $\M_{n,m}$ behaves like a unitary transformation from $\Psi(\bh_k)$ into $\Phi_{\bx_{k-1}}$, meaning it is almost an isometric embedding.}
    \label{fig:RecMat_Gen_Bound_Test1}
\end{figure}

To numerically characterize the condition number (Figure \ref{fig:RecMat_Gen_Cond} and determinant (Figure \ref{fig:RecMat_Gen_Det}) of $\M_{n,m}$ for LRNNs with arbitrary $\bW$, we generated random $\bW$ with spectral norm $\sigma_{\max}(\bW) \in [0,1]$, a suitable range based on Theorem \ref{thm:Block_Mat_Gen_Diag_Dom}.
We swept across $m \in \{1,...,35\}$ and $35$ evenly-spaced values of $\sigma_{\max}(\bW)$ between $[0,1]$.
For each iterate, we generated $\bW$ by initializing a matrix with normally-distributed, mean-zero, unit-variance entries, dividing it by its spectral norm, and rescaling it by the chosen $\sigma_{\max}(\bW)$.
We then computed the condition number and generalized determinant of $\M_{n,m}$.
Figure \ref{fig:RecMat_Gen_Cond} shows that increasing $m$ tends to increase the condition number, but the condition number does not vary much with $\sigma_{\max}(\bW)$.
Figure \ref{fig:RecMat_Gen_Det} shows the determinant is large when both $\sigma_{\max}(\bW)$ and $m$ are large, and small when $\sigma_{\max}(\bW)$ is small and $m$ is large or when $\sigma_{\max}(\bW)$ is large and $m$ is small.

\subsection{Increasing Lags}

Increasing the number of lags in the delay matrix $\M_{n,m}$ with a particular $\bW$ can only increase the ill-conditioning of $\M_{n,m}$.
To show this, we begin by referencing the following eigenvalue interlacing theorem.

\begin{theorem}\label{thm:Cauchy_Interlacing}
    (Theorem 1 ~\cite{Hwang_2004}) Let $\bH \in \C^{n \times n}$ be a Hermitian matrix with real eigenvalues $\lambda_1 \le ... \le \lambda_n$, partitioned as
    \begin{align*}
        \bH = \begin{bmatrix}
            \bA & \bB^* \\
            \bB & \bC
        \end{bmatrix}
    \end{align*}
    where $\bA \in \C^{m \times m}$, $\bB \in \C^{(n-m) \times m}$ and $\bC \in \C^{(n-m) \times (n-m)}$. 
    Then the eigenvalues $\mu_1 \le ... \le \mu_m$ of $\bA$ satisfy $\lambda_k \le \mu_k \le \lambda_{k+n-m}$.
\end{theorem}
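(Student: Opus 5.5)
The plan is to derive both inequalities from the Courant--Fischer min--max characterization of the eigenvalues of a Hermitian matrix. With eigenvalues ordered increasingly, $\lambda_1 \le \cdots \le \lambda_n$, that characterization reads
\begin{align*}
    \lambda_k(\bH) = \min_{\substack{S \subseteq \C^n \\ \dim S = k}} \; \max_{\bx \in S, \, \bx \neq \bzero} \frac{\bx^* \bH \bx}{\bx^* \bx}
    = \max_{\substack{S \subseteq \C^n \\ \dim S = n-k+1}} \; \min_{\bx \in S, \, \bx \neq \bzero} \frac{\bx^* \bH \bx}{\bx^* \bx},
\end{align*}
and the same formulas hold for $\bA$, with $n$ replaced by $m$. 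The key observation links the two matrices. Let $\iota: \C^m \to \C^n$ be the isometric embedding $\iota(\bx) = \begin{bmatrix} \bx \\ \bzero \end{bmatrix}$. For this vector the block structure of $\bH$ gives
\begin{align*}
    \iota(\bx)^* \bH \, \iota(\bx) = \bx^* \bA \bx \quad \mbox{and} \quad \iota(\bx)^*\iota(\bx) = \bx^*\bx.
\end{align*}
Hence the Rayleigh quotient of $\bA$ at $\bx$ equals that of $\bH$ at $\iota(\bx)$. Moreover, $\iota$ maps $k$-dimensional subspaces of $\C^m$ injectively onto $k$-dimensional subspaces of $\C^n$.

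For the lower bound $\lambda_k \le \mu_k$, I will use the min--max form. By the observation above, $\mu_k$ is the minimum, over the restricted family of $k$-dimensional subspaces $\iota(S) \subseteq \C^n$, of the maximum of the Rayleigh quotient of $\bH$ on that subspace. Minimizing over a subfamily can only give a larger value than minimizing over all $k$-dimensional subspaces of $\C^n$. Therefore $\mu_k \ge \lambda_k$.

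For the upper bound $\mu_k \le \lambda_{k+n-m}$, I will use the max--min form. For $\bA$ this is a maximum over subspaces of dimension $m-k+1$. For $\bH$ and the index $k+n-m$, it is a maximum over subspaces of dimension $n-(k+n-m)+1 = m-k+1$, the same dimension. Restricting the outer maximum to the subspaces $\iota(S)$ recovers exactly $\mu_k$. A maximum over a subfamily is no larger than over all subspaces, so $\mu_k \le \lambda_{k+n-m}$. An alternative route is to apply the lower bound to $-\bH$, whose leading block is $-\bA$, and then reverse the orderings; this gives the same conclusion.

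The only real obstacle is the index bookkeeping. The orderings must be increasing, as in the statement. The subspace dimensions in the max--min form must match, namely $m-k+1$ on both sides. In the $-\bH$ route, the reversal $\lambda_j(-\bH) = -\lambda_{n+1-j}(\bH)$ has to be tracked carefully. Courant--Fischer itself I would take as standard; if a self-contained argument were required, I would prove it by intersecting $S$ with the span of the last $n-k+1$ eigenvectors, using the dimension count $k + (n-k+1) > n$.
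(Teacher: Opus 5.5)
Your proof is correct. The identity $\iota(\bx)^*\bH\,\iota(\bx) = \bx^*\bA\bx$ holds, restricting the outer optimization in Courant--Fischer to the subfamily $\{\iota(S)\}$ gives both inequalities, and the index check $n-(k+n-m)+1 = m-k+1$ is right, as is the alternative via $-\bH$.

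The paper gives no proof of its own; it only cites Hwang's note, which argues differently and more elementarily. There one treats the bordered case $m = n-1$ and uses a unitary similarity to make $\bA = \diag(\mu_1,\dots,\mu_{n-1})$. The characteristic polynomial of $\bH$ is then $(x-c)\prod_i (x-\mu_i) - \sum_i |b_i|^2 \prod_{j\neq i}(x-\mu_j)$, whose values at the $\mu_i$ alternate in sign. The intermediate value theorem then places an eigenvalue of $\bH$ in each gap and beyond each end, with a perturbation or reduction step when the $\mu_i$ repeat or some $b_i$ vanishes. General codimension $n-m$ follows by deleting one row and column at a time and chaining the inequalities.

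Your variational route handles arbitrary $m$ in one step and does not care about multiplicities or zero couplings. Its cost is relying on the min--max theorem. For that, your dimension count $k+(n-k+1)>n$ gives one inequality, and taking $S$ to be the span of the first $k$ eigenvectors gives attainment. The determinant/intermediate-value route needs nothing beyond the intermediate value theorem, but pays with the degenerate-case handling and the induction on $n-m$.
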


With the above, we prove $\kappa(\M_{n,m})$ at best remains constant and at worst grows as the number of lags $n$ increases.

\begin{proposition}\label{prop:Lag_Limit}
    Let $\M_{n,m}$ be as in \eqref{eq:M_general}.
    For $n_1 \le n_2$, $\sigma_{\min}(\M_{n_2,m}) \le \sigma_{\min}(\M_{n_1,m})$ and $\sigma_{\max}(\M_{n_1,m}) \le \sigma_{\max}(\M_{n_2,m})$ so that $\kappa(\M_{n_1,m}) \le \kappa(\M_{n_2,m})$.
\end{proposition}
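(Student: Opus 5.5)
The plan is to reduce the statement to Cauchy interlacing (Theorem \ref{thm:Cauchy_Interlacing}) applied to the Gram matrices $\A_{n,m} = \M_{n,m}\M_{n,m}^*$ from \eqref{eq:MM_general}. It suffices to treat $n_2 = n_1 + 1$ and then induct, since the inequalities chain.

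\textbf{Step 1: nesting of the Gram matrices.} Write $n := n_1$. The key structural observation is that $\A_{n,m}$ is exactly the leading principal $mn \times mn$ submatrix of $\A_{n+1,m}$. Both are block tri-diagonal with the same diagonal blocks $\bI + \bW\bW^*$ and the same off-diagonal blocks $\bW$, $\bW^*$; one has $n$ block rows and the other $n+1$. To see this directly, note that the first $mn$ rows of $\M_{n+1,m}$ are $[\,\M_{n,m} \;\; \bzero\,]$, where the zero block has $m$ extra columns. Hence the upper-left block of $\M_{n+1,m}\M_{n+1,m}^*$ equals $\M_{n,m}\M_{n,m}^*$. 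This gives a partition of $\bH := \A_{n+1,m}$ in the form required by Theorem \ref{thm:Cauchy_Interlacing}, with $\bA = \A_{n,m}$, where the sizes there are $mn$ inside $m(n+1)$.

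\textbf{Step 2: apply interlacing.} Order the eigenvalues increasingly: $\lambda_1 \le \dots \le \lambda_{m(n+1)}$ for $\A_{n+1,m}$ and $\mu_1 \le \dots \le \mu_{mn}$ for $\A_{n,m}$. Theorem \ref{thm:Cauchy_Interlacing} gives
\begin{align*}
\lambda_k \le \mu_k \le \lambda_{k+m} \quad \mbox{for } k \in \{1,\dots,mn\}.
\end{align*}
Taking $k=1$ yields $\lambda_{\min}(\A_{n+1,m}) \le \lambda_{\min}(\A_{n,m})$. Taking $k = mn$ yields $\lambda_{\max}(\A_{n,m}) \le \lambda_{m(n+1)} = \lambda_{\max}(\A_{n+1,m})$. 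All of these eigenvalues are nonnegative because the matrices are Gram matrices, and $\sigma_j(\M_{n,m}) = \sqrt{\lambda_j(\A_{n,m})}$. Taking square roots therefore gives $\sigma_{\min}(\M_{n+1,m}) \le \sigma_{\min}(\M_{n,m})$ and $\sigma_{\max}(\M_{n,m}) \le \sigma_{\max}(\M_{n+1,m})$. Induction on $n_2 - n_1$ extends both inequalities to arbitrary $n_1 \le n_2$.

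\textbf{Step 3: the condition number.} Dividing the two inequalities gives $\kappa(\M_{n_1,m}) \le \kappa(\M_{n_2,m})$, with one caveat. If $\sigma_{\min}(\M_{n_1,m}) > 0$, the quotient is valid, and if $\sigma_{\min}(\M_{n_2,m}) = 0$ the right-hand side is $+\infty$ and the inequality holds trivially. If $\M_{n_1,m}$ is rank deficient, then interlacing forces $\M_{n_2,m}$ to be rank deficient too, so both sides are $+\infty$ under the convention $\kappa = +\infty$. This case split is stated only for completeness; it does not actually arise, because $\M_{n,m}$ always has full row rank, as the identity blocks on its diagonal show.

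\textbf{Main obstacle.} The only genuinely delicate point is Step 1. I must verify carefully that the submatrix relation holds at the level of $\M\M^*$, and that the dimension labels in Theorem \ref{thm:Cauchy_Interlacing} are correctly matched ($mn$ inside $m(n+1)$, with shift $m$). Once that bookkeeping is correct, everything else follows immediately from interlacing.
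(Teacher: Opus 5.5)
Your proof is correct and follows the paper's argument: you identify $\A_{n_1,m}$ as the leading principal submatrix of $\A_{n_2,m}$ and apply Cauchy interlacing (Theorem \ref{thm:Cauchy_Interlacing}) at the extreme indices, then take square roots. The induction on $n_2-n_1$ is harmless but unnecessary, since the theorem applies directly with shift $m(n_2-n_1)$; your remark that $\M_{n,m}$ always has full row rank, because its leading $mn\times mn$ block is block unit upper triangular, is also correct.
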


\begin{proof}
    First, we partition the delay matrix with more lags, $\M_{n_2,m}$, according to the partitioning of Theorem \ref{thm:Cauchy_Interlacing}, so that $\M_{n_1,m}$ is within $\M_{n_2,m}$. 
    Let $\bH = \A_{n_2} \in \C^{m n_2 \times m n_2}$, $\bA = \A_{n_1} \in \C^{m n_1 \times m n_1}$, $\bC = \A_{n_2 - n_1} \in \C^{m (n_2 - n_1) \times m(n_2 - n_1)}$, and $\bB = \bzero \in \C^{m(n_2 - n_1) \times m(n_1)}$, except with the upper-right $\C^{m \times m}$ block being $\bW^*$. 
    Then, by Theorem \ref{thm:Cauchy_Interlacing}, noting that $\lambda_1 = \lambda_{\min}$, $\lambda_n = \lambda_{\max}$, and $\mu_m = \mu_{\max}$,
    \begin{align}\label{eq:interlace1}
        \lambda_{1}(\A_{n_2}) \le \lambda_{1}(\A_{n_1}) \implies \sigma_{\min}(\M_{n_2,m}) \le \sigma_{\min}(\M_{n_1,m}),
    \end{align}
    and likewise,
    \begin{align}\label{eq:interlace2}
        \lambda_{m n_1}(\A_{n_1}) \le \lambda_{m n_2}(\A_{n_2}) \implies \sigma_{\max}(\M_{n_1,m}) \le \sigma_{\max}(\M_{n_2,m})
    \end{align}
    By \eqref{eq:interlace1} and \eqref{eq:interlace2}, we get the bound $\kappa(\M_{n_1,m}) \le \kappa(\M_{n_2,m})$.
\end{proof}

Figure \ref{fig:RecMat_Gen_Bound_Test} simulates Proposition \ref{prop:Lag_Limit} numerically.
For each of the dimensions $m \in \{1,2,4,8\}$ of $\bW$, we tested various lag lengths $n \in \{1,2,4,8\}$.
We partitioned $[0,\frac{1}{2}]$ into $100$ evenly-spaced points for $\sigma_{\max}(\bW)$, and for each $\sigma_{\max}(\bW)$, dimension $m$, and lag $\ell$, we computed the condition number of $\M_{n,m}$ for $100$ randomly-generated $\bW$, generated in the same manner as before.
The figure plots the average condition number and the variance for each $\M_{n,m}$.
The empirical results confirm Proposition \ref{prop:Lag_Limit} in that increasing $m$ or $n$  leads to an increase in $\kappa(\M_{n,m})$, but the result does have a finite upper bound, as shown by the black line.

\section{Discussion}

As a framework for analyzing sequence models, we considered under what conditions linear, autonomous, first-order difference equations of the form \eqref{eq:LRNN}, such as those describing LRNNs, can behave as secondary embeddings of multivariate delay-coordinate maps--maps that use successive lags of input time series to equivalently represent the dynamics of the original system.
For LRNNs, the multivariate delay-coordinate map from the measurement delay-coordinates to the LRNN latent space delay-coordinates is guaranteed to be a stable embedding when the delay matrix $\A_{n,m}$ is full rank and of a low condition number.
For scalar, Hermitian, and general $\bW$, we showed that the \textit{bound} on the singular values and condition numbers of $\A_{n,m}$ is independent of $\dim(\bW) = m$ and the number of lags $n$, at least when the spectrum of $\bW$ lies within the unit circle. 
Although the bound is independent of $n$ and $m$, increasing $n$ and $m$ can only worsen the conditioning of the embedding operator $\M_{n,m}$, but up to a finite limit.
Consequently, $\M_{n,m}$ is a stable embedding when allowing for infinite lookback as $n \to \infty$.
These results provide a theoretical justification for how using LRNNs on time series can generate stable embeddings, and how taking more delays will ensure the embedding conditioning is bounded. 
The results derived on the stability of $\M_{n,m}$ seem to agree with the results with ~\cite{Elaydi_2005, Xu_2015, Jung_2015}, and offers a partial explanation for why the eigenvalues of RNN weights, and particularly RNN weights, tend to train toward a distribution contained approximately within the unit circle ~\cite{Glorot_2010, Orvieto_2023, Jarne_2023, Jarne_2024}. 
In a forthcoming paper, we generalize the framework that RNNs applied to time series consisting of partial observations of dynamical systems are Takens'-type time-delay embeddings from the setting of LRNNs to simple nonlinear Elman RNNs ~\cite{Elman_1990}.

An immediate open question is whether rank-deficient $\M_{n,m}$ can still, with high probability, ensure that LRNNs embed the original dynamics in the available time series into the RNN latent space. 
Takens' embedding theorem states that for a $d$-dimensional attractor, $n \ge 2d+1$ delay-coordinates are needed for the attractor to be properly embedded in the delay-coordinate space.
Requiring $\M_{n,m}: \R^{mn} \to \R^{m(n+1)}$ to be full row rank is a \textit{sufficient} condition for preserving information in the delay-coordinates when mapping to the latent space.
However, since $d < p$, there may be less restrictive conditions for the rank of $\M_{n,m}$--and hence the spectrum of $\bW$--that would still ensure the $d$-dimensional attractor, with high probability, is embedded into the latent space and could be the subject of future exploration.

Another question relates to the fact that, while \eqref{eq:LRNN} models a LRNN, it does not model the training process: does the act of training a LRNN promote a search for more stable and less sensitive delay-coordinate maps?
Given the fundamental nature of machine learning as a training process with train/test splits on data, such an outstanding question is critical for understanding the capabilities of deep learning models to approximate temporal sequences.

\section*{Acknowledgments}

We acknowledge support from the Air Force Office of Scientific Research  (FA9550-24-1-0141).

\section*{Code Availability}

The code used to run the experiments and generate the figures is publicly available on GitHub at \url{https://github.com/fisherng19/LRNNs_Time_Delay_Embeddings.git}

\bibliographystyle{unsrt}
\bibliography{references}

\end{document}